\documentclass[reqno, a4paper, 10pt]{amsart}
\usepackage{amsmath,amsthm,amssymb,amscd,amstext,amsopn,amsxtra,amsfonts, bbm, mathrsfs, eucal}
\frenchspacing
\usepackage{color}

\usepackage[all]{xy}
\SelectTips{eu}{}
\entrymodifiers={+!!<0pt,\fontdimen22\textfont2>}

\usepackage[pdftex]{hyperref}
\hypersetup{linktocpage}
\usepackage{url}

\swapnumbers
\theoremstyle{plain}
\newtheorem{thm}[subsection]{Theorem}
\newtheorem{lemma}[subsection]{Lemma}
\newtheorem{prop}[subsection]{Proposition}
\newtheorem{cor}[subsection]{Corollary}

\theoremstyle{remark}

\theoremstyle{definition}
\newtheorem{example}[subsection]{Example}
\newtheorem{remark}[subsection]{Remark}
\newtheorem{assumption}[subsection]{Assumption}
\newtheorem*{warning}{Warning}

\numberwithin{equation}{subsection}

\def\e{\mathbbm{1}}

\def\cA{\mathcal{A}}
\def\cB{\mathcal{B}}
\def\cC{\mathcal{C}}
\def\cD{\mathcal{D}}

\def\cH{\mathcal{H}}

\def\cL{\mathcal{L}}

\def\cO{\mathcal{O}}

\def\cT{\mathcal{T}}

\def\cZ{\mathcal{Z}}

\def\11{\mathbf{1}}

\def\CC{\mathbf{C}} 
\def\DD{\mathbf{d}}

\def\TT{\mathbf{T}} 
 
\def\VV{\mathbb{V}}

\def\ZZ{\mathbf{Z}}

\def\fb{\mathfrak{b}}

\def\fg{\mathfrak{g}} 
\def\fh{\mathfrak{h}}

\def\fz{\mathfrak{z}}

\def\coev{\mathrm{coev}}

\def\Comp{\mathrm{Kom}}

\def\Db{\mathrm{D^b}}

\def\dim{\mathrm{dim}} 
\def\End{\mathrm{End}}
\def\Ext{\mathrm{Ext}}
\def\ev{\mathrm{ev}}

\def\gmof{\mathrm{-gmof}}

\def\Hom{\mathrm{Hom}}
\def\HOM{\mathscr{H}\!om}

\def\id{\mathrm{id}}

\def\mof{\mathrm{-mof}}


\newcommand{\mapright}[1]{\xrightarrow{#1}}

\newcommand{\Verma}[1]{M_{#1}}
\newcommand{\coVerma}[1]{M^{\vee}_{#1}}
\newcommand{\simple}[1]{L_{#1}}
\newcommand{\projective}[1]{P_{#1}}

\newcommand{\gVerma}[1]{\mathtt{M}_{#1}}
\newcommand{\gcoVerma}[1]{\mathtt{M}_{#1}^{\vee}}
\newcommand{\gsimple}[1]{\mathtt{L}_{#1}}
\newcommand{\gprojective}[1]{\mathtt{P}_{#1}}
\newcommand{\gtilting}[1]{\mathtt{D}_{#1}}

\makeatletter
\renewcommand{\@makefnmark}{\mbox{\textsuperscript{}}}
\makeatother

\title{A note on Hecke patterns in category $\cO$}
\author{R. Virk}
\address{Department of Mathematics\\
University of California\\
Davis, CA 95616}
\email{virk@math.ucdavis.edu}

\begin{document}

\maketitle
\setcounter{tocdepth}{1}
\tableofcontents
\section{Introduction}
The purpose of this document is to study a family of auto-equivalences of the derived category of the principal block of the BGG-category $\cO$. In the geometric setting (i.e., perverse sheaves or $D$-modules on the flag variety) it is well known that much of the information of interest to representation theory is encoded in the convolution structure on the relevant categories of sheaves/$D$-modules. This is the theory of the geometric Hecke algebra and `Hecke patterns', see \cite{B}, \cite{BBM}, \cite{BD}, \cite{BeGi}, \cite{L}, \cite{LV}, \cite{T}, \cite{So10}. The equivalences studied in this note correspond to the `standard generators' of the Hecke algebra. One of the goals is to show that many of the results regarding category $\cO$ in the literature are very natural from this point of view: namely that of category $\cO$ as a `reasonably faithtful module' for the Hecke algebra (see \cite{So10}). Our approach is algebraic - perverse sheaves and the geometry of the flag variety are notably absent in our arguments. In the conclusion we do explain how stronger results can be achieved using an additional assumption (Assumption \ref{koszul}). However, as far as I am aware, the only known proof of this assumption is geometric.

Let me now describe the contents of this document and indicate the main results. In \S\ref{s:not}-\S\ref{s:complexesoffunctors} we set up some homological algebra that culminates in \S\ref{s:genconst} in the form of Thm.\ \ref{mainthm} which is originally due to Rickard \cite[Thm.\ 2.1]{Ri} (also see \cite[\S 2.2.3]{Ro}, \cite[Lemma 4.1.1.]{ABG}, \cite[Thm.\ 7.3.16]{Vo}).

In \S\ref{s:catO} we introduce the BGG category $\cO$ and following \cite[\S2.10]{Ja} consider translation and wall crossing functors. Thm.\ \ref{mainthm} is exploited to construct the aforementioned derived auto-equivalences of the principal block of $\cO$ (Prop.\ \ref{dequivO}). Using these we give a quick proof of `Bott's Theorem' \cite[Thm.\ 15]{Bott} in Thm.\ \ref{bottsthm}.

The constructed derived equivalences satisfy the braid relations, in our setting this is due to Rouquier \cite[Thm.\ 4.4]{Ro}. In \S\ref{s:lastnongraded} we exploit the braid relations to show that there is a derived auto-equivalence that switches tilting modules with projective modules (Thm.\ \ref{switchtilting}). Our proof is formally the same as that of \cite[Prop.\ 2.3]{BBM} (also see \cite[Thm.\ 8]{StM}). In fact, the auto-equivalences considered in this document are Koszul dual (in the sense of \cite{BGS}) to the Radon transforms of \cite{BBM}. In Cor.\ \ref{tiltingcharformula} and Cor.\ \ref{ringelselfduality} we recover Soergel's character formula for tilting modules \cite[Thm.\ 6.7]{So98} and the Ringel self duality of the principal block (implicit in \cite{So98}). It should be pointed out that although Soergel doesn't explicitly construct a derived equivalence in \cite{So98} (he works with categories of modules with Verma/dual Verma flags), the derived functor of Arkhipov's twisting functor considered by him is a derived equivalence. In fact, (derived) twisting functors correspond to the Radon transforms of \cite{BBM} and so our approach is essentially Koszul dual to Soergel's.

In \S\ref{s:gO}, following Soergel and Stroppel, we considered graded category $\cO$. This section makes heavy use of \cite{St}. Proceeding as in the non-graded case we construct derived auto-equivalences in this setting and prove graded analogues of the results in the previous sections. In particular, we direct the reader to Thm.\ \ref{dequivOgraded} and \S\ref{s:gtilting}.

Finally, in \S\ref{s:kl}, we explain the connection between our auto-equivalences and Kazhdan-Lusztig theory. The main results are Thm.\ \ref{klequivtilt} and Thm.\ \ref{klconj}. Assumption \ref{koszul} and Thm.\ \ref{klconj} are the only results in this note that depend on geometric results. 

\subsection*{Acknowledgments}
I am grateful to W.\ Soergel for some extremely helpful correspondence. I also thank A.\ Ram for convincing me that this note needed to be written, without his encouragement this document would have never seen the light of day. Part of this document was written while I was a graduate student at the University of Wisconsin-Madison and I thank the department there for its support. This work is partially supported by the NSF grant DMS-0652641.

\section{Notations and conventions}\label{s:not}
\subsection{}Functors between additive categories will be assumed to be additive.

\subsection{}The terms `functorial', `natural' and `canonical' will be used as synonyms for `a morphism of functors'.

\subsection{}If $\cA$ is an additive category, we write $\Comp(\cA)$ for the category of complexes in $\cA$. If $\cA$ is abelian, we write $\Db(\cA)$ for the bounded derived category of $\cA$.

\subsection{}When working with triangulated categories we denote the shift functor by $[1]$. Distinguished triangles $X\to Y \to Z\to X[1]$ will often be written as $X\to Y\to Z\leadsto$.

\subsection{}Let $\cT$ be a triangulated category. We say that an object $X\in\cT$ is filtered by objects $Y_1,\ldots, Y_n$ if there exists a sequence of objects $0=X_0, X_1, \ldots, X_n=X$ and distinguished triangles $X_{i-1}\to X_i \to Y_i \leadsto$. We will often use this notion in the following situation: let $H$ be a cohomological functor on $\cT$. Let $X, X_i, Y_i$ be as above. Assume that $H(Y_i[m])=0$ for all $m\in \ZZ$ and all $i$. Then, proceeding by induction it follows that $H(X[m])=0$ for all $m\in \ZZ$.

\subsection{}If $\cA$ is an abelian or triangulated category, we write $K_0(\cA)$ for the Grothendieck group of $\cA$. If $\cA$ is abelian, then $K_0(\cA)$ and $K_0(\Db(\cA))$ are canonically isomorphic and we take the liberty of identifying them with each other.

\section{Reminders on adjoint functors}\label{s:1}
\subsection{}\label{s:adjunctions}Let $f_*\colon\cA\to\cB$ and $f^*\colon\cB\to\cA$ be functors. An adjunction $(f^*,f_*)$ between $f^*$ and $f_*$ is the data of two natural transformations $\varepsilon\colon f^*f_*\to\id_{\cA}$ and $\eta:\id_{\cB}\to f_*f^*$ such that the compositions 
\begin{equation}\label{eq:unitcounit} f_*\mapright{\eta \e_{f_*}} f_*f^*f_*\mapright{\e_{f_*}\varepsilon}f_* \qquad \mbox{and} \qquad f^*\mapright{\e_{f^*} \eta}f^*f_*f^*\mapright{\varepsilon \e_{f^*}} f^*\end{equation}
are equal to the identity on $f_*$ and $f^*$, respectively. The morphisms $\eta$ and $\varepsilon$ are the unit and counit of the adjunction respectively.
An adjunction gives an isomorphism, functorial in $A\in\cA$ and $B\in\cB$:
\[\alpha_{A,B}\colon \Hom_{\cA}(f^*B, A){\mapright \sim} \Hom_{\cB}(B, f_*A), \qquad
\phi\mapsto \e_{f_*}\phi\circ\eta_B.
\]
The inverse is given by $\psi\mapsto \varepsilon_A\circ \e_{f^*}\psi$.
Conversely, a functorial isomorphism $\alpha_{A,B}$ as above provides an adjunction $(f^*,f_*)$. Namely, set $\varepsilon_A=\alpha^{-1}_{A,f_*A}(\id_{f_*A})$ and $\eta_B=\alpha_{f^*B,B}(\id_{f^*B})$.
If $(f^*,f_*)$ is an adjunction, then the functor $f^*$ is left adjoint to $f_*$ and the functor $f_*$ is right adjoint to $f^*$.

\begin{lemma}\label{keylem}
Let $\cA$ and $\cB$ be additive categories. Suppose $(f^*, f_*)$ is an adjunction between functors $f^*\colon\cA\to\cB$ and $f_*\colon\cB\to \cA$. Let $X\in \cA$, $Y\in\cB$.
\begin{enumerate}
\item If $f^*X \neq 0$, then the unit map $\eta_X\colon X \to f_*f^*X$ is non-zero.
\item If $f_*Y \neq 0$, then the counit map $\varepsilon_Y\colon f^*f_*Y \to Y$ is non-zero.
\end{enumerate}
\end{lemma}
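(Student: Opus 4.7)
The plan is to invoke the two triangle (zig-zag) identities \eqref{eq:unitcounit}, which assert that certain compositions built from $\eta$ and $\varepsilon$ are identity natural transformations. The key observation is that if $\eta_X$ (respectively $\varepsilon_Y$) were zero, then by functoriality its whiskering by $f^{*}$ (resp.\ $f_{*}$) would also be zero, collapsing one of these identity compositions to zero --- forcing the identity of a nonzero object to vanish, a contradiction.

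For part (i), I will evaluate the second identity in \eqref{eq:unitcounit} at $X$. Unwinding the horizontal composition notation pointwise, this reads
\[
\varepsilon_{f^{*}X}\circ f^{*}(\eta_X) \;=\; \id_{f^{*}X}.
\]
Assuming $\eta_X=0$, functoriality of $f^{*}$ gives $f^{*}(\eta_X)=0$, so the composite is $0$. But $f^{*}X\neq 0$ forces $\id_{f^{*}X}\neq 0$, a contradiction. Hence $\eta_X\neq 0$.

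Part (ii) will be proved by the symmetric argument applied to the first identity in \eqref{eq:unitcounit}, evaluated at $Y$:
\[
f_{*}(\varepsilon_Y)\circ \eta_{f_{*}Y} \;=\; \id_{f_{*}Y}.
\]
If $\varepsilon_Y=0$, then functoriality of $f_{*}$ makes the left side $0$, forcing $\id_{f_{*}Y}=0$ and contradicting $f_{*}Y\neq 0$.

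There is essentially no obstacle; in each case the argument is a two-line application of a triangle identity together with functoriality. The only thing to take care of is notational --- reading off, pointwise at $X$ or $Y$, which whiskered factor becomes $f^{*}(\eta_X)$ (resp.\ $f_{*}(\varepsilon_Y)$) as opposed to $\eta_{f^{*}X}$ (resp.\ $\varepsilon_{f_{*}Y}$). Once that is fixed, functoriality of $f^{*}$ (resp.\ $f_{*}$) carries the vanishing hypothesis through.
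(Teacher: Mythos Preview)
Your argument is correct and is exactly the paper's approach: the paper also invokes the triangle identity $\varepsilon_{f^*X}\circ f^*(\eta_X)=\id_{f^*X}$ from \eqref{eq:unitcounit} to conclude that $\eta_X\neq 0$ when $f^*X\neq 0$, and then says that the proof of (ii) is similar. There is nothing to add.
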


\begin{proof}
As the composition $f^* X \mapright{f^*(\eta_X)} f^*f_*f^*X \mapright{\varepsilon_{f^*X}} f^*X$
is the identity on $f^*X$ (see \eqref{eq:unitcounit}), we infer that if $f^*X\neq 0$, then $\eta_X\neq 0$. The proof of (ii) is similar.
\end{proof}

\subsection{}\label{s:transpose}Let $f^*, g^*\colon\cA\to\cB$, $f_*,g_*\colon\cB\to\cA$ be functors and let $(f^*,f_*)$, $(g^*,g_*)$ be adjunctions. Let $\eta$ and $\varepsilon$ denote the unit and counit of the adjunction $(f^*,f_*)$, and let $\eta'$ and $\varepsilon'$ denote the unit and counit of the adjunction $(g^*,g_*)$. Let $\phi\colon f_*\to g_*$ be a natural transformation. The \emph{transpose} $\phi^{\vee}: g^*\to f^*$ is the composition
\begin{equation}\label{eq:transpose} g^*\mapright{\e_{g^*} \eta}g^*f_*f^*\mapright{\e_{g^*}\phi\e_{f^*}}g^*g_*f^*\mapright{\varepsilon'\e_{f^*}}f^*. \end{equation}
The following is a reformulation of \cite[Ch.\ 4 \S7, Thm.\ 2]{MacL}.
\begin{prop}\label{transposeunique}
Suppose $(f^*, f_*)$ and $(g^*, g_*)$ are adjunctions between functors $f^*,g^*\colon\cA\to\cB$ and $f_*,g_*\colon \cB\to\cA$. Let 
\[
\alpha\colon \Hom_{\cA}(f^* - , -)\mapright{\sim}\Hom_{\cB}(-,f_* -), \quad
\alpha'\colon\Hom_{\cA}(g^*-,-)\mapright{\sim}\Hom_{\cB}(-,g_*-),
\]
be the canonical isomorphisms obtained from this data. Let $\phi\colon f_*\to g_*$ be a natural transformation. Then $\phi^{\vee}\colon g^*\to f^*$ is the unique natural transformation such that the following diagram commutes:
\[\xymatrixrowsep{1.5pc}\xymatrixcolsep{1.5pc}\xymatrix{
\Hom_{\cA}(f^* -, -)\ar[r]^-{\circ \phi^{\vee}}\ar[d]_{\alpha}^{\sim}&\Hom_{\cA}(g^* - , -)\ar[d]_{\sim}^{\alpha'} \\
\Hom_{\cB}(-, f_* -)\ar[r]_-{\phi\circ}& \Hom_{\cB}(-, g_* -)
}
\]
\end{prop}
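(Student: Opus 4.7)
The plan is to pin down $\phi^{\vee}$ by tracing the identity morphism through the commutative square. This simultaneously handles uniqueness and reveals that the formula \eqref{eq:transpose} is forced.

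First I will handle uniqueness. Suppose $\psi\colon g^*\to f^*$ is any natural transformation for which the diagram commutes. Fix $B$ and specialize the second argument of the diagram to $A = f^*B$. Evaluating at $\id_{f^*B}\in\Hom_{\cA}(f^*B,f^*B)$, traversing first right then down yields $\alpha'(\psi_B)$, while traversing first down then right yields $\phi_{f^*B}\circ\alpha(\id_{f^*B}) = \phi_{f^*B}\circ\eta_B$. Thus $\alpha'(\psi_B) = \phi_{f^*B}\circ\eta_B$, so
\[
\psi_B \;=\; (\alpha')^{-1}(\phi_{f^*B}\circ\eta_B) \;=\; \varepsilon'_{f^*B}\circ g^*(\phi_{f^*B})\circ g^*(\eta_B),
\]
which is precisely the composition $\phi^{\vee}_B$ defined by \eqref{eq:transpose}.

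Next I will verify that this $\phi^{\vee}$ does make the square commute. For any $\psi\colon f^*B\to A$, unwinding the formulas for $\alpha$ and $\alpha'$ recalled in \S\ref{s:adjunctions} yields
\[
\alpha'(\psi\circ\phi^{\vee}_B) \;=\; g_*(\psi)\circ g_*(\phi^{\vee}_B)\circ\eta'_B \qquad\text{and}\qquad \phi_A\circ\alpha(\psi) \;=\; \phi_A\circ f_*(\psi)\circ\eta_B.
\]
Naturality of $\phi$ rewrites the right-hand side as $g_*(\psi)\circ\phi_{f^*B}\circ\eta_B$, so the desired equality reduces to $g_*(\phi^{\vee}_B)\circ\eta'_B = \phi_{f^*B}\circ\eta_B$, i.e.\ to $\alpha'(\phi^{\vee}_B) = \phi_{f^*B}\circ\eta_B$. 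But this is immediate from the inverse adjunction formula, since by the uniqueness calculation we have $\phi^{\vee}_B = (\alpha')^{-1}(\phi_{f^*B}\circ\eta_B)$.

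The argument is essentially bookkeeping; no genuine obstacle should arise once the Yoneda-style specialization to $\id_{f^*B}$ is identified. The only subtlety is keeping track of which adjunction data, $(\eta,\varepsilon)$ or $(\eta',\varepsilon')$, enters at each step, since the two play asymmetric roles in \eqref{eq:transpose}.
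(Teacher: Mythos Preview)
Your argument is correct and essentially the same as the paper's: both unwind $\alpha$ and $\alpha'$ via the unit/counit formulas and use naturality of $\phi$, with uniqueness coming from the Yoneda-style specialization to $\id_{f^*B}$. The only difference is organizational---you derive the formula for $\phi^{\vee}$ first via uniqueness and then verify commutativity, whereas the paper computes $\alpha'^{-1}(\phi\circ\alpha(?))=?\circ\phi^{\vee}$ directly for general $?$ and invokes Yoneda at the end.
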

\begin{proof}By definition, $\alpha'^{-1}(\phi\circ\alpha(?))=\varepsilon'\circ \e_{g^*}\phi\circ \e_{g^*f_*}? \circ \e_{g^*}\eta$. Since all morphisms involved are natural transformations, 
\begin{align*}
\varepsilon'\circ \e_{g^*}\phi\circ \e_{g^*f_*}? \circ \e_{g^*}\eta &=\varepsilon'\circ\e_{g^*g_*}?\circ\e_{g^*}\phi\e_{f^*}\circ\e_{g^*}\eta \\
&=?\circ \varepsilon'\e_{f^*}\circ\e_{g^*}\phi\e_{f^*}\circ\e_{g^*}\eta \\
&=?\circ\phi^{\vee}.
\end{align*}
So $\alpha'^{-1}(\phi\circ\alpha(?))=?\circ\phi^{\vee}$ which gives the commutativity of the diagram. As $\alpha$ and $\alpha'$ are isomorphisms, the natural transformation $\circ\phi^{\vee}\colon \Hom_{\cA}(f^*-,-)\to\Hom_{\cA}(g^*-,-)$ is unique. Hence, $\phi^{\vee}$ is unique by the Yoneda Lemma.
\end{proof}

\begin{prop}\label{transposecommute}Suppose $(f^*, f_*)$ and $(g^*, g_*)$ are adjunctions between functors $f^*, g^*\colon\cA\to\cB$ and $f_*,g_*:\cB\to\cA$. Let $\phi\colon f_*\to g_*$ be a natural transformation.
\begin{enumerate}
\item Let $\eta,\varepsilon$ denote the unit and counit of $(f^*,f_*)$ and let $\eta',\varepsilon'$ be the unit and counit of $(g^*, g_*)$. Then the following diagrams commute:
\[\xymatrixcolsep{1.5pc}\xymatrixrowsep{1.5pc}\xymatrix{
f^*f_*\ar[r]^{\varepsilon}&\id \\
g^*f\ar[u]^{\phi^{\vee}\e_{f_*}}\ar[r]_{\e_{g^*}\phi}&g^*g_*\ar[u]_{\varepsilon'}}
\qquad
\xymatrixcolsep{1.5pc}\xymatrixrowsep{1.5pc}\xymatrix{
f_*f^*\ar[r]^{\phi\e_{f^*}}&g_*f^* \\
\id\ar[u]^{\eta}\ar[r]_{\eta'}&g_*g^*\ar[u]_{\e_{g_*}\phi^{\vee}}
}
\]
\item Assume $\cA$ and $\cB$ are additive. Let $\psi\colon f_*\to g_*$ be a natural transformation, then $(\phi+\psi)^{\vee}=\phi^{\vee}+\psi^{\vee}$.
\item Let $(h^*, h_*)$ be an adjunction between functors $h^*\colon\cA\to\cB$ and $h_*\colon\cB\to\cA$. Further, let $\psi\colon g_*\to h_*$ be a natural transformation. Then $(\psi\circ\phi)^{\vee} = \phi^{\vee}\circ \psi^{\vee}$.
\end{enumerate}
\end{prop}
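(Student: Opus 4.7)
The plan is to leverage the uniqueness clause of Prop.~\ref{transposeunique}: for each part, I will exhibit a natural transformation that satisfies the characterizing diagram and let uniqueness identify it with the transpose in question. Spelling that characterization out, for $\phi\colon f_*\to g_*$ and any morphism $\beta\colon f^*B\to A$, one has
\[
\phi_A\cdot f_*(\beta)\cdot\eta_B \;=\; g_*(\beta)\cdot g_*(\phi^\vee_B)\cdot\eta'_B.
\]

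For part (i), I will specialize $\beta$. Taking $\beta=\id_{f^*B}$ (so $A=f^*B$) gives $\phi_{f^*B}\cdot\eta_B = g_*(\phi^\vee_B)\cdot\eta'_B$, which is exactly the second diagram at $B$. Taking $\beta=\varepsilon_A$ (so $B=f_*A$), the left-hand side collapses to $\phi_A$ by the zig-zag identity $f_*(\varepsilon_A)\cdot\eta_{f_*A}=\id_{f_*A}$, yielding
\[
\phi_A \;=\; g_*(\varepsilon_A\cdot\phi^\vee_{f_*A})\cdot\eta'_{f_*A} \;=\; \alpha'_{A,f_*A}\bigl(\varepsilon_A\cdot\phi^\vee_{f_*A}\bigr).
\]
Applying $(\alpha')^{-1}$ (whose explicit form $\psi\mapsto\varepsilon'\circ\e_{g^*}\psi$ was recorded in \S\ref{s:adjunctions}) turns this into $\varepsilon_A\cdot\phi^\vee_{f_*A}=\varepsilon'_A\cdot g^*(\phi_A)$, which is the first diagram at $A$.

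For part (ii), additivity of $\Hom$ gives $(-)\circ(\phi^\vee+\psi^\vee)=(-)\circ\phi^\vee+(-)\circ\psi^\vee$ on the top row of the diagram in Prop.~\ref{transposeunique}, and symmetrically $(\phi+\psi)\circ(-)=\phi\circ(-)+\psi\circ(-)$ on the bottom row. Summing the commutative diagrams for $\phi$ and $\psi$ therefore exhibits $\phi^\vee+\psi^\vee$ as a transformation making the diagram for $\phi+\psi$ commute, and uniqueness forces $(\phi+\psi)^\vee=\phi^\vee+\psi^\vee$.

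For part (iii), apply the displayed identity first to $\phi$ at $\beta$, and then to $\psi$ at the composite $\beta\circ\phi^\vee_B\colon g^*B\to A$. Chaining the two yields
\[
(\psi\circ\phi)_A\cdot f_*(\beta)\cdot\eta_B \;=\; h_*(\beta)\cdot h_*(\phi^\vee_B\circ\psi^\vee_B)\cdot\eta''_B,
\]
with $\eta''$ the unit of $(h^*,h_*)$. This is the characterizing identity for the transpose of $\psi\circ\phi$ with candidate $\phi^\vee\circ\psi^\vee$, so uniqueness identifies $(\psi\circ\phi)^\vee=\phi^\vee\circ\psi^\vee$. The only non-formal step anywhere in the argument is the zig-zag simplification invoked in part (i); everything else is bookkeeping organized around the uniqueness in Prop.~\ref{transposeunique}, so I do not anticipate a genuine obstacle.
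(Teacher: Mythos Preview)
Your proposal is correct and follows essentially the same approach as the paper's proof: both derive (i) by specializing the commutative square in Prop.~\ref{transposeunique}, and both obtain (ii) and (iii) from additivity together with the uniqueness clause there. You have simply unpacked in detail what the paper compresses into one sentence per item.
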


\begin{proof} 
(i) follows from the commutativity of the diagram in Prop.\ \ref{transposeunique}. (ii) follows from our standing assumption that functors between additive categories are additive, i.e., the induced maps on $\Hom$ groups are homomorphisms. (iii) follows from the uniqueness part of Prop.\ \ref{transposeunique}.
\end{proof}

\begin{prop}\label{transposeiso}Let $f^*\colon\cA\to\cB$, $f_*\colon\cB\to\cA$ be functors and let $(f^*,f_*)$ be an adjunction. 
\begin{enumerate}
\item $\e_{f_*}^{\vee}=\e_{f^*}$.
\item Assume $\cA$ and $\cB$ are additive. Then $0^{\vee}=0$.
\item If $e\colon f_*\to f_*$ is idempotent, then $e^{\vee}\colon f^*\to f^*$ is also idempotent.
\end{enumerate}
\end{prop}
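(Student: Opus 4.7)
All three parts reduce immediately to material from \S\ref{s:transpose}; there is no serious obstacle and the proof is essentially bookkeeping. My plan is simply to specialize the general results of the section in each case.

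For (i), I unwind the definition \eqref{eq:transpose} of the transpose in the special case $g^* = f^*$, $g_* = f_*$, with the same adjunction data on both sides (so $\eta' = \eta$, $\varepsilon' = \varepsilon$), and $\phi = \e_{f_*}$. The middle arrow $\e_{f^*} \e_{f_*} \e_{f^*}$ is literally the identity natural transformation $\e_{f^* f_* f^*}$, so the formula for $\e_{f_*}^{\vee}$ collapses to the composition
\[
f^* \mapright{\e_{f^*}\eta} f^* f_* f^* \mapright{\varepsilon \e_{f^*}} f^*,
\]
which is precisely one of the triangle identities displayed in \eqref{eq:unitcounit}. Hence $\e_{f_*}^{\vee} = \e_{f^*}$.

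Parts (ii) and (iii) I expect to dispatch by invoking Prop.\ \ref{transposecommute}. For (ii), applying Prop.\ \ref{transposecommute}(ii) with $\phi = \psi = 0$ gives $0^{\vee} = (0 + 0)^{\vee} = 0^{\vee} + 0^{\vee}$, and subtracting in the abelian group of natural transformations yields $0^{\vee} = 0$. For (iii), I apply Prop.\ \ref{transposecommute}(iii) taking both $(g^*, g_*)$ and $(h^*, h_*)$ to be the given adjunction $(f^*, f_*)$, and $\phi = \psi = e$. This yields $(e \circ e)^{\vee} = e^{\vee} \circ e^{\vee}$; the idempotency of $e$ collapses the left-hand side to $e^{\vee}$, so $e^{\vee} \circ e^{\vee} = e^{\vee}$, proving that $e^{\vee}$ is idempotent.
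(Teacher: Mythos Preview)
Your argument is correct. The paper's proof appeals uniformly to the uniqueness clause of Prop.~\ref{transposeunique} for all three parts, whereas you handle (i) by direct computation from the defining formula \eqref{eq:transpose} together with the triangle identity \eqref{eq:unitcounit}, and (ii)--(iii) via Prop.~\ref{transposecommute}. Since Prop.~\ref{transposecommute} is itself proved from the uniqueness in Prop.~\ref{transposeunique}, the two approaches are essentially the same; your treatment of (i) is if anything slightly more elementary, avoiding the Yoneda-style characterization in favor of an explicit unwinding.
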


\begin{proof}Each of the equalities follows from the uniqueness part of Prop.\ \ref{transposeunique}. Details are left to the reader out of sheer laziness.
\end{proof}

\subsection{}
Let $(f^*,f_*)$ and $(g^*,g_*)$ be adjunctions between functors $g^*\colon\cA\to\cB$, $g_*\colon\cB\to\cA$, $f^*\colon\cB\to\cC$ and $f_*\colon\cC\to\cB$. Then we have the data of four morphisms (units and counits): $\eta\colon \id_{\cB} \to f_*f^*$, $\varepsilon\colon f^*f_* \to \id_{\cC}$, $\eta'\colon \id_{\cA} \to g_*g^*$ and $\varepsilon\colon g^*g_* \to \id_{\cB}$.
It is well known that $f^*g^*$ is left adjoint to $g_*f_*$. It is sometimes useful to have a precise version of this:
let $\overline{\eta}$ and $\overline{\varepsilon}$ be the compositions
\[\id_{\cA}\mapright{\eta'}g_*g^*\mapright{\e_{g_*}\eta\e_{g^*}}g_*f^*f_*g^*
\quad\mbox{and}\quad
f^*g^*g_*f_*\mapright{\e_{f^*}\varepsilon'\e_{f_*}}f^*f_*\mapright{\varepsilon}\id_{\cB},\]
respectively.

\begin{lemma}\label{transposelemma}The natural transformations $\overline{\eta}$ and $\overline{\varepsilon}$ define an adjunction $(f^*g^*, g_*f_*)$. Further, $\varepsilon^{\vee}=\eta'$ and $(\eta')^{\vee}=\varepsilon$.
\end{lemma}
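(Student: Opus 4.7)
The strategy is to first verify the triangle identities for the composite $(f^*g^*, g_*f_*)$, then check the two transpose identities.

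For the adjunction claim, I would verify the triangle identity $(\overline{\varepsilon}\e_{f^*g^*})\circ(\e_{f^*g^*}\overline{\eta}) = \id_{f^*g^*}$; the second triangle identity (for $g_*f_*$) follows by a symmetric argument. Substituting the definitions of $\overline{\eta}$ and $\overline{\varepsilon}$, this composition expands to the four-step composite
\[
f^*g^* \xrightarrow{\e_{f^*g^*}\eta'} f^*g^*g_*g^* \xrightarrow{\e_{f^*g^*g_*}\eta\e_{g^*}} f^*g^*g_*f_*f^*g^* \xrightarrow{\e_{f^*}\varepsilon'\e_{f_*f^*g^*}} f^*f_*f^*g^* \xrightarrow{\varepsilon\e_{f^*g^*}} f^*g^*.
\]
The two middle arrows whisker $\eta$ and $\varepsilon'$ on disjoint strands of the intermediate string, so by the interchange law their composite can be performed in the opposite order, becoming $f^*g^*g_*g^* \xrightarrow{\e_{f^*}\varepsilon'\e_{g^*}} f^*g^* \xrightarrow{\e_{f^*}\eta\e_{g^*}} f^*f_*f^*g^*$, which factors through $f^*g^*$. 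After this rewrite, the first three arrows collapse to $\id_{f^*g^*}$ via the triangle identity for $(g^*, g_*)$ whiskered by $f^*$, and the last two collapse to $\id_{f^*g^*}$ via the triangle identity for $(f^*, f_*)$ whiskered by $g^*$.

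For the transpose identities $\varepsilon^{\vee}=\eta'$ and $(\eta')^{\vee}=\varepsilon$, the plan is to apply the defining formula \eqref{eq:transpose} directly. Each transpose becomes a composition of whiskered units and counits from the two component adjunctions which telescopes, by repeated use of the triangle identities together with naturality, to $\eta'$ (respectively $\varepsilon$). Alternatively, by Prop.\ \ref{transposeunique} it suffices to check commutativity of the relevant $\Hom$-isomorphism diagram with $\eta'$ (respectively $\varepsilon$) in place of the transpose; this reduces to the adjunction property of $(\overline{\eta}, \overline{\varepsilon})$ just established, together with the naturality of the individual adjunction isomorphisms.

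The main obstacle is notational bookkeeping for the many 2-cell compositions. A string-diagram presentation reduces each triangle identity to a local zig-zag move and renders the verification mechanical.
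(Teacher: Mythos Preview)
Your approach is essentially the same as the paper's: verify one triangle identity by expanding $\overline{\eta}$ and $\overline{\varepsilon}$, use naturality (the interchange law) to commute the $\eta$ and $\varepsilon'$ steps, and then collapse via the two component triangle identities; the other identity is symmetric. The paper happens to check the $g_*f_*$-side identity while you check the $f^*g^*$-side, but the manipulations are identical. One small slip: after your swap there are four arrows, and it is the first \emph{two} (not three) that collapse via the $(g^*,g_*)$ zig-zag and the last two via $(f^*,f_*)$. For the transpose claims, your plan---unwind \eqref{eq:transpose} using the just-constructed $(\overline{\eta},\overline{\varepsilon})$ and simplify by a single triangle identity---is exactly what the paper does; e.g.\ $(\eta')^{\vee}=\overline{\varepsilon}\circ\e_{f^*g^*}\eta'=\varepsilon\circ\e_{f^*}\varepsilon'\e_{f_*}\circ\e_{f^*g^*}\eta'=\varepsilon$.
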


\begin{proof}We have
\begin{align*}
\e_{g_*f_*}\overline{\varepsilon}\circ\overline{\eta}\e_{g_*f_*} &=
\e_{g_*f_*}\varepsilon\circ\e_{g_*f_*f^*}\varepsilon'\e_{f_*}\circ\e_{g_*}\eta\e_{g^*g_*f_*}\circ\eta'\e_{g_*f_*} \\
&= \e_{g_*f_*}\varepsilon\circ\e_{g_*}\eta\e_{f_*}\circ\e_{g_*}\varepsilon'\e_{f_*}\circ\eta'\e_{g_*f_*} \\
&=\e_{g_*f_*},
\end{align*}
where the first equality is the definition of $\overline{\varepsilon}$ and $\overline{\eta}$, the second equality holds due to $\eta$ and $\varepsilon'$ being natural transformations and the last equality follows from the definition of unit/counit \eqref{eq:unitcounit}.
The proof that $\overline{\varepsilon}\e_{f^*g^*}\circ\e_{f^*g^*}\overline{\eta}=\e_{f^*g^*}$ is similar. Thus, $\overline{\eta}$ and $\overline{\varepsilon}$ define an adjunction $(f^*g^*, g_*f_*)$.
Further,
\[\varepsilon^{\vee}=\varepsilon\e_{f^*f_*}\circ\overline{\eta} 
=\varepsilon\e_{f^*f_*}\circ \e_{f^*}\eta \e_{f_*} \circ \eta' 
=\eta',\]
where the first equality is the definition of transpose \eqref{eq:transpose}, the second equality is the definition of $\overline{\eta}$ and the last equality follows from the definition of the unit/counit \eqref{eq:unitcounit}.
Similarly,
\[(\eta')^{\vee}=\varepsilon\circ \e_{f^*}\varepsilon'\e_{f_*}\circ \e_{f^*f_*}\eta'=\varepsilon.\qedhere\]
\end{proof}

\subsection{}Let $(h^*, h_*)$ be another adjunction, between functors $h^*\colon \cZ\to\cA$, $h_*\colon \cA\to\cZ$. Using the procedure above there are, \emph{a priori}, two different ways to define an adjunction $(f^*g^*h^*, h_*g_*f_*)$: either first construct an adjunction $(g^*h^*, h_*g_*)$ and then an adjunction $(f^*(g^*h^*), (h_*g_*)f_*)$ or first construct an adjunction $(f^*g^*, g_*f_*)$ and then an adjunction $((f^*g^*)h^*, h_*(g_*f_*))$. Let
\[ \Hom_{\cC}(f^*g^*h^*X, Y)\mapright{\alpha} \Hom_{\cB}(g^*h^*X, f_*Y) \mapright{\alpha'}\Hom_{\cZ}(X, h_*g_*f_* Y),\]
\[ \Hom_{\cC}(f^*g^*h^*X, Y)\mapright{\alpha''}\Hom_{\cA}(h^*X, g_*f_*Y) \mapright{\alpha'''}\Hom_{\cZ}(X, h_*g_*f_* Y),\]
$X\in Z$, $Y\in\cC$, be the sequences of canonical isomorphisms obtained this way.
\begin{prop}The following diagram commutes.
\[\xymatrix{
\Hom_{\cC}(f^*g^*h^*X, Y) \ar[r]^{\alpha''}\ar[d]_{\alpha}& \Hom_{\cA}(h^*X, g_*f_*Y) \ar[d]^{\alpha'''} \\
\Hom_{\cB}(g^*h^*X, f_*Y) \ar[r]^{\alpha'} & \Hom_{\cZ}(X, h_*g_*f_*Y)
}\]
\end{prop}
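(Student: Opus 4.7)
The plan is to prove the stronger statement that the two iterated constructions in fact produce \emph{identical} adjunction data (same unit, same counit) for the composite pair $(f^*g^*h^*, h_*g_*f_*)$. Once this is established, both $\alpha'\circ\alpha$ and $\alpha'''\circ\alpha''$ are the canonical Hom-isomorphism associated to that common adjunction via the recipe $\phi\mapsto F_*(\phi)\circ\eta_X$ of \S\ref{s:adjunctions}, and commutativity of the diagram is immediate.

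Write $\eta^h,\eta^g,\eta^f$ for the units of $(h^*,h_*)$, $(g^*,g_*)$, $(f^*,f_*)$, and apply Lemma \ref{transposelemma} twice along the first route. The unit of $(g^*h^*,h_*g_*)$ is $\e_{h_*}\eta^g\e_{h^*}\circ\eta^h$, and then the unit of the adjunction $(f^*(g^*h^*),(h_*g_*)f_*)$ produced from this and $(f^*,f_*)$ is
\[
\e_{h_*g_*}\eta^f\e_{g^*h^*}\circ\e_{h_*}\eta^g\e_{h^*}\circ\eta^h.
\]
Running the same recipe along the second route, the unit of $(f^*g^*,g_*f_*)$ is $\e_{g_*}\eta^f\e_{g^*}\circ\eta^g$, and then the unit of $((f^*g^*)h^*,h_*(g_*f_*))$ is $\e_{h_*}(\e_{g_*}\eta^f\e_{g^*}\circ\eta^g)\e_{h^*}\circ\eta^h$. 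Distributing the outer whiskerings over the vertical composition and combining $\e_{h_*}\e_{g_*}=\e_{h_*g_*}$ and $\e_{g^*}\e_{h^*}=\e_{g^*h^*}$ shows that this expression coincides with the one displayed above. The analogous computation using the dual formula for $\overline{\varepsilon}$ from Lemma \ref{transposelemma} shows that the two counits also agree.

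Having verified that both iterated constructions yield the same adjunction data $(\overline{\eta}^{fgh},\overline{\varepsilon}^{fgh})$ on $(f^*g^*h^*, h_*g_*f_*)$, the two Hom-isomorphisms $\alpha'\circ\alpha$ and $\alpha'''\circ\alpha''$ both equal $\phi\mapsto h_*g_*f_*(\phi)\circ\overline{\eta}^{fgh}_X$, and the square commutes.

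The main obstacle is essentially nonexistent: the sole piece of content is the interchange law for whiskering used in comparing the two unit expressions, which is purely formal. The care needed is bookkeeping — keeping three units straight, and making sure the outer whiskerings combine correctly on either side of each $\eta$.
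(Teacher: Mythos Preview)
Your proof is correct and is essentially the same argument as the paper's, just phrased at the level of units rather than Hom-isomorphisms: the paper observes directly that each of $\alpha'$ and $\alpha''$ is, by the construction of Lemma~\ref{transposelemma}, the composite of the two underlying one-step adjunction isomorphisms, so both $\alpha'\circ\alpha$ and $\alpha'''\circ\alpha''$ equal the common three-step composite. Your verification that the two iterated units agree is exactly the unit-side manifestation of this same associativity.
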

\begin{proof}
Both $\alpha'\circ \alpha$ and $\alpha'''\circ \alpha''$ are equal to the composite canonical isomorphism
\begin{align*}
&\Hom_{\cC}(f^*g^*h^*X, Y)\mapright{\sim}\Hom_{\cB}(g^*h^*X, f_*Y)\mapright{\sim} \Hom_{\cA}(h^*X, g_*f_*Y)\\
&\mapright{\sim} \Hom_{\cZ}(X, h_*g_*f_*Y).\qedhere
\end{align*}
\end{proof}

\subsection{}
Let $f_!,g_!\colon\cA\to\cB$, $f^!,g^!\colon\cB\to\cA$ be functors and let $(f_!,f^!)$, $(g_!, g^!)$ be adjunctions. Write $\eta$ and $\varepsilon$ for the unit and counit of $(f_!, f^!)$, and write $\eta'$ and $\varepsilon'$ for the unit and counit of $(g_!, g^!)$. Suppose $\psi\colon g_!\to f_!$ is a natural transformation. Then the \emph{right transpose} 
$\vphantom{\psi}^{\vee}\psi\colon f^!\to g^!$ is the composition
\begin{equation}\label{eq:righttranspose} 
f^!\mapright{\eta'\e_{f^!}}g^!g_!f^!\mapright{\e_{g^!}\psi'\e_{f^!}}g^!f_!f^!\mapright{\e_{g^!}\varepsilon}g^!.
\end{equation}
The next result allows us to transport all the statements for transposes to right transposes.
\begin{prop}Let $(f_!, f^!)$ and $(g_!, g^!)$ be adjunctions between functors $f_!,g_!\colon\cA\to\cB$ and $f^!,g^!\colon\cB\to\cA$. Let $\phi\colon f^!\to g^!$ be a natural transformation. Then $\vphantom{\phi}^{\vee}(\phi^{\vee})=\phi$. Similarly, if $\psi\colon g_! \to f_!$ is a natural transformation, then $(\vphantom{\psi}^{\vee}\psi)^{\vee} = \psi$
\end{prop}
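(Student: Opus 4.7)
The plan is to reduce both identities to a Yoneda-uniqueness argument, mirroring the treatment of the transpose in \S\ref{s:transpose}. The key observation is that Prop.\ \ref{transposeunique} characterizes $\phi^{\vee}$ (for $\phi\colon f_*\to g_*$ between right adjoints) as the unique natural transformation whose pre-composition on the source side of the Hom isomorphism corresponds to post-composition by $\phi$ on the target side. I expect the right transpose to admit a fully analogous characterization, after which the inversion identities drop out of Yoneda uniqueness.

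The first step is to establish the mirror of Prop.\ \ref{transposeunique}: for $\psi\colon g_!\to f_!$, the right transpose $\vphantom{\psi}^{\vee}\psi\colon f^!\to g^!$ is the unique natural transformation making the diagram
\[\xymatrixrowsep{1.5pc}\xymatrixcolsep{1.5pc}\xymatrix{
\Hom_{\cB}(f_!-, -) \ar[r]^{-\circ\psi}\ar[d]_{\alpha}^{\sim} & \Hom_{\cB}(g_!-, -)\ar[d]_{\sim}^{\alpha'} \\
\Hom_{\cA}(-, f^!-) \ar[r]_{\vphantom{\psi}^{\vee}\psi\,\circ\, -} & \Hom_{\cA}(-, g^!-)
}\]
commute, where $\alpha, \alpha'$ are the canonical isomorphisms obtained from $(f_!, f^!)$ and $(g_!, g^!)$. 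The proof would be a direct mirror of that of Prop.\ \ref{transposeunique}: unwind the definition \eqref{eq:righttranspose}, push the natural transformations through using the naturality of $\eta'$ and $\varepsilon$, and appeal to the Yoneda lemma for uniqueness.

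With this mirror statement in hand, both inversion identities follow readily. For $\vphantom{\phi}^{\vee}(\phi^{\vee}) = \phi$: by Prop.\ \ref{transposeunique}, pre-composition with $\phi^{\vee}\colon g_!\to f_!$ intertwines with post-composition by $\phi\colon f^!\to g^!$ under the adjunction isomorphisms; the mirror says $\vphantom{\phi}^{\vee}(\phi^{\vee})$ is the \emph{unique} natural transformation $f^!\to g^!$ enjoying this property, so $\vphantom{\phi}^{\vee}(\phi^{\vee}) = \phi$. The identity $(\vphantom{\psi}^{\vee}\psi)^{\vee} = \psi$ follows by the symmetric argument, with the roles of Prop.\ \ref{transposeunique} and the mirror interchanged. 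The only real obstacle is the bookkeeping needed to verify the mirror characterization — largely a matter of swapping units with counits and source with target in the original proof — but no genuinely new ingredients beyond those of \S\ref{s:transpose} are required.
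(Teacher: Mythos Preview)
Your approach is correct but differs from the paper's. The paper gives a direct computation: it expands $\vphantom{\phi}^{\vee}(\phi^{\vee})$ as the five-fold composite obtained by substituting the formula \eqref{eq:transpose} for $\phi^{\vee}$ into \eqref{eq:righttranspose}, and then simplifies line by line using naturality of the units/counits and the triangle identities \eqref{eq:unitcounit}, arriving at $\phi$ after five steps. You instead propose first proving the mirror of Prop.\ \ref{transposeunique} (the Hom-set characterization of the right transpose) and then invoking Yoneda uniqueness: since both $\phi$ and $\vphantom{\phi}^{\vee}(\phi^{\vee})$ intertwine the same maps on Hom-sets, they coincide. Your route is more conceptual and makes the inversion formula essentially automatic once the mirror lemma is in place; the paper's route is self-contained and avoids stating an auxiliary lemma. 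It is worth noting that the paper's logical order is the reverse of yours: the paper proves the inversion identity \emph{in order to} transport all earlier transpose results (including Prop.\ \ref{transposeunique}) to right transposes, whereas you establish the mirror of Prop.\ \ref{transposeunique} independently and use it to get the inversion. Either order works; yours front-loads a small computation (the mirror of Prop.\ \ref{transposeunique}, which is indeed a line-for-line dual of the original) in exchange for a cleaner finish.
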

\begin{proof}Let $\eta,\varepsilon$ be the unit and counit of $(f_!, f^!)$ and let $\eta',\varepsilon'$ be the unit and counit of $(g_!,g^!)$. Then
\begin{align*}
\vphantom{\phi}^{\vee}(\phi^\vee)&=\e_{g^!}\varepsilon\circ \e_{g^!}\varepsilon'\e_{f_!f^!}\circ\e_{g^!g_!}\phi\e_{f_!f^!}\circ \e_{g^!g_!}\eta\e_{f^!}\circ \eta'\e_{f^!} \\
&=\e_{g^!}\varepsilon\circ \e_{g^!}\varepsilon'\e_{f_!f^!}\circ\e_{g^!g_!}\phi\e_{f_!f^!}\circ \eta'\e_{f^!f_!f^!}\circ \eta\e_{f^!} \\
&=\e_{g^!}\varepsilon\circ \e_{g^!}\varepsilon'\e_{f_!f^!}\circ \eta'\e_{g^!f_!f^!}\circ\phi\e_{f_!f^!}\circ\eta\e_{f^!} \\
&= \e_{g^!}\varepsilon\circ \phi \e_{f_!f^!}\circ\eta\e_{f^!} \\
&= \phi \circ \e_{f^!}\varepsilon\circ \eta\e_{f^!} \\
&= \phi.
\end{align*}
The first equality is by the definition of transpose \eqref{eq:transpose} and right transpose \eqref{eq:righttranspose}, the second, third and fifth equalities are due to the fact that all morphisms involved are natural transformations. The fourth and last equalities follow from the definition of the unit/counit \eqref{eq:unitcounit}.
The proof that $(\vphantom{\psi}^{\vee}\psi)^{\vee}=\psi$ is similar.
\end{proof}

\section{Complexes of functors}\label{s:complexesoffunctors}
\subsection{}Let $\cA,\cB$ be additive categories. Write $\HOM(\cA,\cB)$ for the additive category of functors $\cA\to \cB$ with morphisms given by natural transformations. Let $\cC$ be another additive category. Let $F\in\Comp(\HOM(\cB,\cC))$, $G\in\Comp(\HOM(\cA,\cB))$. Define the object $F G$ in $\Comp(\HOM(\cA,\cC))$ to be the complex whose degree $n$ component is $\bigoplus_{i+j=n}F^iG^j$ with differential
\[ d_{FG}\colon F^iG^j \to F^{i+1}G^j \oplus F^iG^{j+1}, \qquad d_{FG}=d_F\e_{G^j}+(-1)^i\e_{F^i}d_G. \]
\begin{remark}
$FG$ is the total complex of the double complex $\{F^iG^j\}_{i,j}$.
\end{remark}
\begin{prop}\label{2catassoc}
Let $\cA,\cB,\cC,\cD$ be additive categories. Let $F\in\Comp(\HOM(\cC,\cD))$, $G\in\Comp(\HOM(\cB,\cC))$, $H\in\Comp(\HOM(\cA,\cB))$. Then 
$(FG)H = F(GH)$.
\end{prop}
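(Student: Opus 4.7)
The proof plan is essentially a bookkeeping exercise: both sides are the total complex of the triple complex $\{F^i G^j H^l\}_{i,j,l}$, so one only needs to verify that the two ways of totalizing it (first $i,j$ then $l$, versus first $j,l$ then $i$) yield literally the same complex, not merely an isomorphic one.

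First I would identify the underlying graded objects. Unwinding the definition of the composition of complexes twice gives
\[
((FG)H)^n = \bigoplus_{k+l=n}(FG)^k H^l = \bigoplus_{i+j+l=n} F^i G^j H^l,
\]
and symmetrically
\[
(F(GH))^n = \bigoplus_{i+m=n}F^i(GH)^m = \bigoplus_{i+j+l=n} F^i G^j H^l.
\]
Hence the degree $n$ components coincide on the nose.

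Next I would check that the differentials agree summand by summand. Restricted to the factor $F^i G^j H^l$, the formula for $d_{FG}$ combined with the outer totalization yields
\[
d_{(FG)H}|_{F^i G^j H^l} = d_F\e_{G^j H^l} + (-1)^i \e_{F^i} d_G\e_{H^l} + (-1)^{i+j}\e_{F^i G^j} d_H,
\]
since the outer differential contributes $d_{FG}\e_{H^l}$ plus the sign $(-1)^{i+j}$ (the degree of $(FG)^{i+j}$) times $\e_{(FG)^{i+j}} d_H$. On the other side, applying the definition of $d_{F(GH)}$ and then expanding $d_{GH}$ on the summand $G^j H^l$ gives
\[
d_{F(GH)}|_{F^i G^j H^l} = d_F\e_{G^j H^l} + (-1)^i \e_{F^i}\bigl(d_G\e_{H^l}+(-1)^j \e_{G^j} d_H\bigr),
\]
which rearranges to the same three-term expression. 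Thus $d_{(FG)H}=d_{F(GH)}$ on every summand, so the complexes are equal.

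The only subtle point is making sure the two sign conventions line up; this is the standard Koszul-sign calculation for the associativity of totalization, and it works out because both prescriptions attach the sign $(-1)^{i+j}$ to the outermost differential $d_H$ on a summand indexed by $(i,j,l)$. Once the signs are matched there is no further content, and no appeal to results outside this section is needed.
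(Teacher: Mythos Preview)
Your proof is correct and matches the paper's own argument essentially verbatim: both identify the degree $n$ component as $\bigoplus_{i+j+l=n}F^iG^jH^l$ and then expand $d_{(FG)H}$ and $d_{F(GH)}$ on a summand $F^iG^jH^l$ to the same three-term expression $d_F\e_{G^jH^l}+(-1)^i\e_{F^i}d_G\e_{H^l}+(-1)^{i+j}\e_{F^iG^j}d_H$. There is nothing to add.
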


\begin{proof}
The degree $n$ component of both $(FG)H$ and $F(GH)$ is $\bigoplus_{i+j+k=n}F^iG^jH^k$. It remains to check that the differentials on both sides coincide. 
The differential for $(FG)H$, 
$d_{(FG)H}\colon F^iG^jH^k \to F^{i+1}G^jH^k \oplus F^iG^{j+1}H^k \oplus F^iG^jH^{k+1}$
is
\begin{align*}
d_{(FG)H} &= d_{FG}\e_{H^k}+(-1)^{i+j}\e_{F^iG^j}d_H \\
&=d_F\e_{G^jH^k}+(-1)^i\e_{F^i}d_G\e_{H^k} + (-1)^{i+j}\e_{F^iG^j}d_H.
\end{align*}
The differential for $F(GH)$,
$d_{F(GH)}\colon F^iG^jH^k \to F^{i+1}G^jH^k \oplus F^iG^{j+1}H^k \oplus F^iG^jH^{k+1}$
is
\begin{align*}
d_{F(GH)}&=d_{F}\e_{G^jH^k}+(-1)^i\e_{F^i}d_{GH} \\
&= d_{F}\e_{G^jH^k} + (-1)^i \e_{F^i}d_G\e_{H^k} + (-1)^{i+j}\e_{F^iG^j}d_H.\qedhere
\end{align*}
\end{proof}

\subsection{}Let $\cA$ and $\cB$ be additive categories.
Let $(f^*_i, f_{i*})$, $i\in\ZZ$, be adjunctions between functors $f_{i*}\colon\cA\to\cB$ and $f_{i}^*\colon\cB\to\cA$. Suppose we have a complex of functors
\[ F_* = \cdots\mapright{d_{-2}}f_{-1*}\mapright{d_{-1}} f_{0*} \mapright{d_0} f_{1*} \mapright{d_{1}} \cdots, \]
with $f_{0*}$ in degree $0$. Set
\[ F^* = \cdots \mapright{d_1^{\vee}}f_1^*\mapright{d_{0}^{\vee}} f_0^* \mapright{d_{-1}^{\vee}} f_{-1}^*\mapright{d_{-2}^{\vee}}\cdots, \]
with $f^*_0$ in degree $0$. Then Prop.\ \ref{transposecommute} (iii) and Prop.\ \ref{transposeiso} (ii) imply that $F^*$ is also a complex.
The degree $0$ term of $F^*F$ is $\bigoplus_{i\in\ZZ} f_i^*f_{i*}$. 
View the identity functor as a complex concentrated in degree $0$. Define $\ev\colon F^*F_*\to\id$ by
\[ \left(\begin{smallmatrix}\cdots & -\varepsilon_{-2}&-\varepsilon_{-1}&\varepsilon_0&\varepsilon_1&-\varepsilon_2&-\varepsilon_3&\varepsilon_4&\varepsilon_5&\cdots\end{smallmatrix}\right)\colon\bigoplus_{i\in\ZZ} f_i^*f_{i*}\to \id,\]
where $\varepsilon_i$ is the counit of the adjunction $(f_i^*, f_{i*})$.
The differential on the degree $-1$ term of $F^*F_*$ is given by
\[\left(\begin{smallmatrix}
d_i^{\vee}\e_{f_{i*}} \\
(-1)^{i+1}\e_{f^*_{i+1}}d_i
\end{smallmatrix}\right)
\colon f^*_{i+1}f_{i*}\to f^*_if_{i*} \oplus f^*_{i+1}f_{i+1*}.
\]
This combined with Prop.\ \ref{transposecommute} (i) implies that $\ev$ is a chain map.
Similarly, the degree $0$ term of $F_*F^*$ is $\bigoplus_{i\in\ZZ}f_{i*}f_i^*$. Define
$\coev\colon\id\to F_*F^*$ by
\[ \left(\begin{smallmatrix}
\vdots \\
-\eta_{-2} \\
-\eta_{-1} \\
\eta_0 \\
\eta_1 \\
-\eta_2 \\
-\eta_3\\
\eta_4\\ 
\eta_5 \\
\vdots
\end{smallmatrix}\right)\colon
\id\to \bigoplus_{i\in\ZZ}f_{i*}f_i^*,.\]
where $\eta_i$ is the counit of the adjunction $(f^*_i, f_{i*})$.
The differential on the degree $0$ term is given by
\[ \left(\begin{smallmatrix}
d_i\e_{f_i^*}\\
(-1)^i\e_{f_{i*}}d_{i-1}^{\vee}
\end{smallmatrix}\right)\colon f_{i*}f_i^*\to f_{i+1*}f_i^*\oplus f_{i*}f_{i-1}^*. \]
This combined with Prop.\ \ref{transposecommute} (i) gives that $\coev$ is a chain map.
\begin{prop}\label{complexesadjoint}The compositions
\[\xymatrix{F_* \ar[r]^-{\coev\e_{F_*}}& F_*F^*F_*\ar[r]^-{\e_{F_*}\ev}&F_* }\quad \mbox{and}\quad\xymatrix{F^*\ar[r]^-{\e_{F^*}\coev}&F^*F_*F^*\ar[r]^-{\ev\e_{F^*}}&F^*} \]
are equal to the identity on $F_*$ and $F^*$, respectively.
\end{prop}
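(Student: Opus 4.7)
The plan is to verify both triangle identities componentwise on the complexes, reducing each to the unit/counit identity \eqref{eq:unitcounit} for the individual adjunctions $(f_i^*, f_{i*})$. The two identities are handled symmetrically, so I focus on $\e_{F_*}\ev \circ \coev\,\e_{F_*} = \e_{F_*}$.

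The key bookkeeping is that $F_*$ has $f_{i*}$ in degree $i$ while $F^*$ has $f_i^*$ in degree $-i$ (transposition reverses the direction of the differentials), so the degree-$i$ part of $F_*F^*F_*$ decomposes as $\bigoplus_{a+b+c=i} f_{a*}\, f_{-b}^*\, f_{c*}$. Let $\sigma(k) \in \{\pm 1\}$ denote the sign attached to $\eta_k$ in $\coev$; inspection of the two displayed matrices shows that the identical sign pattern governs $\varepsilon_k$ in $\ev$.

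The heart of the argument is an index-matching check. On the summand $f_{i*}$ (degree $i$ of $F_*$), the map $\coev\,\e_{F_*}$ has components $\sigma(j)\eta_j\e_{f_{i*}}\colon f_{i*} \to f_{j*}f_j^*f_{i*}$, each landing in the $(a,b,c) = (j,-j,i)$ summand. The map $\e_{F_*}\ev$ is nonzero on a summand $f_{a*}\,f_{-b}^*\,f_{c*}$ only when its two rightmost factors form a summand of $F^*F_*$ in degree $0$, i.e.\ only when $c=-b$, in which case it applies $\sigma(-b)\,\e_{f_{a*}}\varepsilon_{-b}$. Combining $(a,b,c)=(j,-j,i)$ with $c=-b$ forces $j=i$, leaving a single surviving term:
\[ f_{i*} \xrightarrow{\sigma(i)\eta_i\e_{f_{i*}}} f_{i*}f_i^*f_{i*} \xrightarrow{\sigma(i)\,\e_{f_{i*}}\varepsilon_i} f_{i*}, \]
which equals $\sigma(i)^2 = 1$ times the zig-zag $\e_{f_{i*}}\varepsilon_i \circ \eta_i\e_{f_{i*}} = \e_{f_{i*}}$ by \eqref{eq:unitcounit}.

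Since this holds on every degree, $\e_{F_*}\ev \circ \coev\,\e_{F_*}$ is the identity chain map on $F_*$. The same reasoning, with the roles of unit and counit swapped, handles $\ev\,\e_{F^*} \circ \e_{F^*}\coev$: on the summand $f_i^*$ in degree $-i$ of $F^*$ only the index $j=i$ contribution survives, and the surviving composite $\varepsilon_i\e_{f_i^*} \circ \e_{f_i^*}\eta_i$ equals $\e_{f_i^*}$ by the second half of \eqref{eq:unitcounit}. The only delicate point I anticipate is the sign bookkeeping: the fact that $\coev$ and $\ev$ carry identical sign patterns is precisely what yields $\sigma(i)^2 = +1$ rather than any partial cancellation, and any sign mismatch between the two conventions would have to be reconciled before the calculation concludes.
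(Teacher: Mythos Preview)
Your argument is correct and is precisely the componentwise reduction to the zig-zag identities \eqref{eq:unitcounit} that the paper has in mind; the paper's proof is a one-line ``This follows from the corresponding properties of $\eta_i$ and $\varepsilon_i$'' together with a pointer to the worked two-term case in Example~\ref{prooffromscratch}. Your version simply makes the index matching and the $\sigma(i)^2=1$ sign cancellation explicit, which is exactly the content the paper leaves to the reader.
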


\begin{proof}This follows from the corresponding properties of $\eta_i$ and $\varepsilon_i$ (cf.\ example \ref{prooffromscratch}).
\end{proof}

\section{A general construction}\label{s:genconst}
\subsection{}\label{s:trianggen}Let $\cT$ be a triangulated category. Let $\cA,\cB\subseteq \cT$ be subcategories. For $X\in \cT$ write $[X]\in\cA$ (resp.\ $\cB$) if there exists an object in $\cA$ (resp.\ $\cB$) isomorphic to $X$. Define
\begin{align*}
\cA * \cB = \{Y\in\cT\,|\,&\mbox{there is a distinguished triangle $X\to Y\to Z\leadsto$}\\
&\mbox{with $[X]\in\cA$ and $Z\in\cB$}\}.
\end{align*}
The operation $*$ is associative (see \cite[Lemma 1.3.10]{BBD}). Inductively define $\cA^{*i}$, $i\in\ZZ_{\geq 0}$, by $\cA^{*0}=0$ and $\cA^{*i+1} = \cA *\cA^{*i}$. Set $\cA^{*\infty}=\bigcup_{i\in\ZZ_{\geq 0}} \cA^{*i}$.
It is evident that $X\in\cA^{*n}$ if and only if $X$ is filtered by some $Y_1,\ldots, Y_n\in\cA$.

\begin{lemma}\label{devissagetriang}Let $\cT$ and $\cT'$ be triangulated categories. Let $\cL\subset \cT$ be a subcategory (not necessarily triangulated). Suppose that $\cL^{*\infty}=\cT$. Let $f,g\colon \cT\to\cT'$ be exact functors and let $\varepsilon\colon f\to g$ be a morphism of exact functors. If $\varepsilon_L\colon fL \to gL$ is an isomorphism for each $L\in\cL$, then $\varepsilon\colon f\to g$ is an isomorphism.
\end{lemma}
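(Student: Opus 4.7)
The plan is a standard dévissage by induction on the filtration length. Since $\cL^{*\infty} = \cT$, every object $X \in \cT$ lies in $\cL^{*n}$ for some $n \in \ZZ_{\geq 0}$, so it suffices to prove by induction on $n$ that $\varepsilon_X$ is an isomorphism whenever $X \in \cL^{*n}$.

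For the base case $n=0$ we have $X = 0$, and both $fX$ and $gX$ are zero objects since $f$ and $g$ are exact (they preserve the zero object as the cone over the identity of $0$), so $\varepsilon_X$ is trivially an isomorphism. For the inductive step, assume the result for $\cL^{*(n-1)}$ and let $X \in \cL^{*n}$. By definition of $*$, there is a distinguished triangle
\[ X' \to X \to Y \leadsto \]
in $\cT$ with $[X'] \in \cL$ and $Y \in \cL^{*(n-1)}$. Applying the exact functors $f$ and $g$ and using the naturality of $\varepsilon$, we obtain a morphism of distinguished triangles
\[\xymatrix{
fX' \ar[r]\ar[d]_{\varepsilon_{X'}} & fX \ar[r]\ar[d]_{\varepsilon_X} & fY \ar[r]\ar[d]_{\varepsilon_Y} & fX'[1] \ar[d]^{\varepsilon_{X'}[1]} \\
gX' \ar[r] & gX \ar[r] & gY \ar[r] & gX'[1]
}\]
in $\cT'$. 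By the inductive hypothesis $\varepsilon_Y$ is an isomorphism. Since $[X'] \in \cL$, there exists $L \in \cL$ and an isomorphism $\phi\colon L \to X'$; by naturality of $\varepsilon$ we have $\varepsilon_{X'} = g(\phi) \circ \varepsilon_L \circ f(\phi)^{-1}$, and since $\varepsilon_L$ is an isomorphism by hypothesis, so is $\varepsilon_{X'}$. The triangulated five lemma then implies that $\varepsilon_X$ is an isomorphism, completing the induction.

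There is no real obstacle here: the argument is the triangulated analogue of the five-lemma dévissage. The only mild subtlety is the closure under isomorphism built into the definition of $\cA * \cB$ via the notation $[X'] \in \cA$, which is handled by the naturality square for $\phi$ as above.
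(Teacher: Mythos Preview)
Your proof is correct and is essentially the same as the paper's: induction on $n$ combined with the triangulated five lemma. The only cosmetic difference is that the paper places the object of $\cL^{*(n-1)}$ on the left of the triangle and the object of $\cL$ on the right (using $\cL^{*n}=\cL^{*(n-1)}*\cL$, which is equivalent to the defining $\cL*\cL^{*(n-1)}$ by associativity of $*$); your version follows the literal definition and is, if anything, slightly more careful about the base case and the closure-under-isomorphism issue.
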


\begin{proof}Proceed by induction, assume that if $i<n$, then $\varepsilon_L\colon fL\to gL$ is an isomorphism for each $L\in \cL^{*i}$. Let $M\in\cL^{*n}$, then we have a distinguished triangle $N \to M \to L\leadsto$ with $N\in\cL^{*n-1}$ and $L\in\cL$. So we obtain a commutative diagram
\[ \xymatrix{
N\ar[r]\ar[d]_{\varepsilon_N}^{\sim}& M\ar[r]\ar[d]_{\varepsilon_M} & L\ar@{~>}[r]\ar[d]_{\varepsilon_L}^{\sim}& \\
N\ar[r]& M\ar[r] & L\ar@{~>}[r]&
}\]
The outer vertical arrows are isomorphisms by hypothesis. This forces the middle arrow to also be an isomorphism.
\end{proof}

\subsection{}Let $\cA$ and $\cB$ be abelian categories. Let $F\in\Comp(\HOM(\cA,\cB))$. Assume that each component of $F$ is an exact functor. For further simplicity assume that $F$ is bounded. Then $F$ defines a functor $\Comp(\cA) \to\Comp(\cB)$ (it is defined exactly as the `composition' in \S\ref{s:complexesoffunctors}). Since each component of $F$ is exact, this gives an exact functor $\Db(\cA)\to\Db(\cB)$.

The following is originally due to Rickard \cite[Thm.\ 2.1]{Ri} (also see \cite[\S2.2.3]{Ro}, \cite[Lemma 4.1.1]{ABG}, \cite[Thm.\ 7.3.16]{Vo}.
\begin{thm}\label{mainthm}
Let $\cA$ and $\cB$ be abelian categories.
Assume each object in $\cA$ has finite length.
Let $(\pi^*,\pi_*)$ and $(\pi_*, \pi^!)$ be adjunctions between exact functors $\pi_*\colon\cA\to\cB$ and $\pi^*,\pi^!\colon\cB\to\cA$. Then we have the data of four morphisms (units and counits):
\[ \eta\colon \id_{\cB} \to \pi_*\pi^*, \quad \varepsilon\colon \pi^*\pi_*\to \id_{\cA}, \quad \eta'\colon\id_{\cA} \to \pi^!\pi_*, \quad \varepsilon'\colon \pi_*\pi^! \to\id_{\cB}. \]
Define complexes of functors $\Theta^*$ and $\Theta^!$:
\[ \Theta^*=0\to \pi^*\pi_* \mapright{\varepsilon} \id_{\cA} \to 0 \qquad\mbox{and}\qquad \Theta^!= 0 \to \id_{\cA} \mapright{\eta'} \pi^!\pi_* \to 0, \]
with $\pi^*\pi_*$ and $\pi^!\pi_*$ in degree $0$. By Lemma \ref{transposelemma} and Prop.\ \ref{complexesadjoint}, $\Theta^*$ is left adjoint to $\Theta^!$. Fix an adjunction $(\Theta^*,\Theta^!)$ and denote the unit by $\coev$ and the counit by $\ev$.
\begin{enumerate}
\item If $[\pi^*\pi_*\pi^!\pi_* X] = [\pi^*\pi_* X] + [\pi^!\pi_*X]$ in $K_0(\cA)$ for each $X\in\cA$, then $\ev\colon\Theta^*\Theta^!\to\id$ is an isomorphism of functors on $\Db(\cA)$.
\item If $[\pi^!\pi_*\pi^*\pi_* X] = [\pi^*\pi_* X] + [\pi^!\pi_*X]$ in $K_0(\cA)$ for each $X\in\cA$, then $\coev\colon\id\to\Theta^!\Theta^*$ is an isomorphism of functors on $\Db(\cA)$.
\end{enumerate}
\end{thm}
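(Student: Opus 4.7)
The plan is a dévissage. By Lemma~\ref{devissagetriang}, since every object of $\cA$ has finite length, the simples (and their shifts) generate $\Db(\cA)$ under iterated distinguished triangles; as $\ev$ is a morphism of exact functors, it suffices to verify that $\ev_L : \Theta^*\Theta^! L \to L$ is an isomorphism for every simple $L \in \cA$.

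Two cases arise. If $\pi_* L = 0$, then $\pi^*\pi_* L = \pi^!\pi_* L = 0$, so the two-term complexes $\Theta^!L$ and $\Theta^*L$ each collapse to a single term, producing $\Theta^*\Theta^! L \cong L$ formally; a direct inspection of signs in the formula for $\ev$ from \S\ref{s:complexesoffunctors} shows $\ev_L$ is, up to sign, the identity on $L$. If $\pi_* L \neq 0$, Lemma~\ref{keylem} applied to $(\pi_*,\pi^!)$ forces the unit $\eta'_L$ to be a nonzero map out of the simple $L$, hence injective. Let $C := \coker(\eta'_L)$. Then the short exact sequence $0 \to L \to \pi^!\pi_* L \to C \to 0$ identifies $\Theta^! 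L$ with $C$ in degree $0$ in $\Db(\cA)$, whence $\Theta^*\Theta^! L$ is isomorphic in $\Db(\cA)$ to the two-term complex $\Theta^* C = [\pi^*\pi_* C \xrightarrow{\varepsilon_C} C]$ placed in degrees $0,1$.

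The task reduces to showing $\ker(\varepsilon_C) \cong L$ and $\coker(\varepsilon_C) = 0$. Here the $K_0$-hypothesis enters: exactness of $\pi^*\pi_*$ yields a short exact sequence $0 \to \pi^*\pi_* L \to \pi^*\pi_*\pi^!\pi_* L \to \pi^*\pi_* C \to 0$, which combined with hypothesis (i) gives $[\pi^*\pi_* C] = [\pi^!\pi_* L] = [L] + [C]$ in $K_0(\cA)$. Naturality of $\varepsilon$ applied to $L \hookrightarrow \pi^!\pi_* L \twoheadrightarrow C$ produces a commutative diagram whose rows are these two short exact sequences and whose columns are $\varepsilon_L$, $\varepsilon_{\pi^!\pi_* L}$, and $\varepsilon_C$; the snake lemma then presents $\ker\varepsilon_C$ and $\coker\varepsilon_C$ in terms of the counits at $L$ and at $\pi^!\pi_* L$. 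The triangle identity $\pi_*\varepsilon_Y \circ \eta_{\pi_* Y} = \id_{\pi_* Y}$ shows $\pi_*(\coker\varepsilon_Y) = 0$ for every $Y \in \cA$; combining this with the $K_0$-equality above and the simplicity of $L$ (together with $\pi_* L \neq 0$, which prevents $L$ from appearing as a composition factor of anything in $\ker\pi_*$), one forces $\ker\varepsilon_C \cong L$ and $\coker\varepsilon_C = 0$. To conclude that the resulting quasi-isomorphism \emph{is} $\ev_L$, apply Lemma~\ref{keylem} to the composed adjunction $(\Theta^*,\Theta^!)$: since $\Theta^! L \neq 0$ one has $\ev_L \neq 0$, and simplicity of $L$ promotes any nonzero map from an object with cohomology $L$ in degree $0$ to an isomorphism.

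The main obstacle is the passage from the numerical $K_0$-equality together with the snake-lemma data to the object-level identifications $\ker\varepsilon_C \cong L$ and $\coker\varepsilon_C = 0$; this requires controlling the composition factors of $\ker\varepsilon_{\pi^!\pi_* L}$ and $\coker\varepsilon_{\pi^!\pi_* L}$ using exactness and the vanishing $\pi_*(\coker\varepsilon_Y)=0$. Part (ii) is proved by the formally dual argument: one reduces to simples by Lemma~\ref{devissagetriang}, identifies $\Theta^* L$ with $K := \ker(\varepsilon_L)$ placed in degree $1$ when $\pi_* L \neq 0$, and runs the same $K_0$-and-snake-lemma analysis, now for $\eta'_K$ in place of $\varepsilon_C$, to verify that $\coev_L : L \to \Theta^!\Theta^* L$ is an isomorphism.
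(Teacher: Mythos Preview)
Your overall architecture --- reduce to simples via Lemma~\ref{devissagetriang}, then use Lemma~\ref{keylem} on the adjunction $(\Theta^*,\Theta^!)$ together with simplicity of $L$ to conclude --- is exactly the paper's. The divergence is in how you establish that $\Theta^*\Theta^! L$ is concentrated in degree $0$ with class $[L]$, and this is where your argument has a genuine gap.

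You first replace $\Theta^! L$ by $C=\coker(\eta'_L)$ and then try to show $\ker\varepsilon_C\cong L$ and $\coker\varepsilon_C=0$. The $K_0$ identity gives only $[\ker\varepsilon_C]-[\coker\varepsilon_C]=[L]$, and the triangle identity gives $\pi_*(\coker\varepsilon_C)=0$. But these two facts do \emph{not} force $\coker\varepsilon_C=0$: nothing prevents a simple $M$ with $\pi_*M=0$ from appearing once in $\coker\varepsilon_C$ and once in $\ker\varepsilon_C$, cancelling in $K_0$. Your snake-lemma diagram only reduces the question to the surjectivity of $\varepsilon_{\pi^!\pi_* L}\colon \pi^*\pi_*\pi^!\pi_*L\to\pi^!\pi_*L$, which you do not establish; the phrase ``controlling the composition factors of $\ker\varepsilon_{\pi^!\pi_*L}$ and $\coker\varepsilon_{\pi^!\pi_*L}$'' names the obstacle without removing it.

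The paper sidesteps this entirely by never passing to $C$. It writes $\Theta^*\Theta^! X$ as the three-term complex
\[
0\to \pi^*\pi_* X \xrightarrow{\;(\varepsilon,\ \e_{\pi^*\pi_*}\eta')\;} X\oplus \pi^*\pi_*\pi^!\pi_* X \xrightarrow{\;(-\eta',\ \varepsilon\e_{\pi^!\pi_*})\;} \pi^!\pi_* X\to 0
\]
and observes from the triangle identity $\e_{\pi^*}\varepsilon'\e_{\pi_*}\circ \e_{\pi^*\pi_*}\eta'=\id_{\pi^*\pi_*}$ that the first differential is a (split) monomorphism; a similar argument is invoked for the second being an epimorphism. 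This gives $\Theta^*\Theta^! X\in\cA$ for \emph{every} $X\in\cA$, so for $L$ simple the $K_0$ hypothesis yields $\Theta^*\Theta^! L\cong L$ in one stroke, with no snake lemma and no case split on $\pi_*L$. The moral: do the vanishing of outer cohomology at the level of the total complex using the unit/counit identities, \emph{before} specializing to simples; your detour through $C$ discards exactly the splitting information that makes the paper's argument work.
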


\begin{proof}
By definition, the functor $\Theta^*\Theta^!$ is given by the complex
\[\xymatrixcolsep{3.5pc}\xymatrix{
0\ar[r]&\pi^*\pi_*\ar[r]^-{\left(\begin{smallmatrix}\varepsilon\\\e_{\pi^*\pi_*}\eta'\end{smallmatrix}\right)}&\id_{\cA}\oplus\pi^*\pi_*\pi^!\pi_*\ar[r]^-{\left(\begin{smallmatrix}-\eta'&\varepsilon\e_{\pi^!\pi_*}\end{smallmatrix}\right)}&\pi^!\pi_*\ar[r]& 0.}\]
By definition of the unit $\eta'$ and the counit $\varepsilon'$, the composition 
\[\xymatrixcolsep{3.5pc}\xymatrix{\pi^*\pi_* \ar[r]^-{\e_{\pi^*\pi_*}\eta'} & \pi^*\pi_*\pi^!\pi_* \ar[r]^-{\e_{\pi^*}\varepsilon'\e_{\pi_*}} &\pi^*\pi_*}\]
is the identity on $\pi^*\pi_*$. Thus,
$\xymatrixcolsep{3.5pc}\xymatrix{
\pi^*\pi_*\ar[r]^-{\left(\begin{smallmatrix}\varepsilon\\\e_{\pi^*\pi_*}\eta'\end{smallmatrix}\right)}&\id_{\cA}\oplus\pi^*\pi_*\pi^!\pi_*
}$
is a monomorphism. A similar argument shows that
$\xymatrixcolsep{3.5pc}\xymatrix{
\id_{\cA}\oplus\pi^*\pi_*\pi^!\pi_*\ar[r]^-{\left(\begin{smallmatrix}-\eta'&\varepsilon\e_{\pi^!\pi_*}\end{smallmatrix}\right)}&\pi^!\pi_*
}$
is an epimorphism. Hence, if $X\in\cA$, then $\Theta^*\Theta^!X$ is isomorphic (in $\Db(\cA)$) to an object in $\cA$. Let $L\in \cA$ be simple, then under the hypothesis of (i):
\[[\Theta^*\Theta^! L] = [\pi^*\pi_*\pi^!\pi_*L]+[L] - [\pi^*\pi_*L]-[\pi^!\pi_*L]
= [L] \quad \mbox{in $K_0(\cA)$}.\]
This forces $\Theta^*\Theta^!L\simeq L$. Lemma \ref{keylem} (ii) gives that $\ev\colon\Theta^*\Theta^!L\to L$ is non-zero. Since $L$ is simple, this implies that $\ev\colon\Theta^*\Theta^!L\to L$ is an isomorphism. As every object in $\cA$ is of finite length, every object in $\cA$ is filtered by simple objects. Further, every object in $\Db(\cA)$ is filtered by shifts of objects in $\cA$. Thus, every object in $\Db(\cA)$ is filtered by shifts of the simple objects in $\cA$. Applying Lemma \ref{devissagetriang} now gives (i).

The functor $\Theta^!\Theta^*$ is given by the complex
\[ \xymatrixcolsep{3.5pc}\xymatrix{0\ar[r]& \pi^*\pi_*\ar[r]^-{\left(\begin{smallmatrix}\eta'\e_{\pi^*\pi_*} \\ -\varepsilon\end{smallmatrix}\right)}&\pi^!\pi_*\pi^*\pi_*\oplus\id_{\cA}\ar[r]^-{\left(\begin{smallmatrix}\e_{\pi^!\pi_*}\varepsilon & \eta'\end{smallmatrix}\right)} &\pi^!\pi_*\ar[r]&0}. \]
Now an argument similar to the one for (i) gives (ii).
\end{proof}

\begin{example}\label{prooffromscratch}We will now work out a `proof from scratch' of Thm.\ \ref{mainthm} in the special case $\pi^!=\pi^*$.

Let $\cA$ and $\cB$ be abelian categories. Assume each object in $\cA$ has finite length. Let $(\pi^*,\pi_*)$ and $(\pi_*,\pi^*)$ be adjunctions between exact functors $\pi_*\colon\cA\to\cB$ and $\pi^*\colon\cB\to\cA$. Then we have the data of four morphisms (units and counits):
\[ \eta\colon\id_{\cB}\to\pi_*\pi^*, \quad \varepsilon\colon\pi^*\pi_*\to\id_{\cA}, \quad \eta'\colon\id_{\cA}\to\pi^*\pi_*\quad \varepsilon'\colon\pi_*\pi^*\to\id_{\cB}.\]
Let $\Theta^*=0\to\pi^*\pi_*\mapright{\varepsilon}\id_{\cA}\to 0$ and $\Theta^!=0\to\id_{\cA}\mapright{\eta'}\pi^*\pi_*\to 0$ with $\pi^*\pi_*$ in degree $0$ in both cases.
Let's show that $\Theta^*$ is left adjoint to $\Theta^!$.
It is helpful to keep track of terms in this computation `in color' (I apologize to the reader trying to read this in monochrome).
The functor $\color{blue}\Theta^*\color{red}\Theta^!$ is given by the complex
\[\xymatrixcolsep{3.5pc}\xymatrix{
\color{blue}\pi^*\pi_*\color{red}\id_{\cA}\ar[r]^-{\left(\begin{smallmatrix}\varepsilon\\\e_{\color{blue}\pi^*\pi_*}\eta'\end{smallmatrix}\right)}&\color{blue}\id_{\cA}\color{red}\id_{\cA}\color{black}\oplus\color{blue}\pi^*\pi_*\color{red}\pi^*\pi_*\ar[r]^-{\left(\begin{smallmatrix}-\eta'&\varepsilon\e_{\color{red}\pi^*\pi_*}\end{smallmatrix}\right)}&\color{blue}\id_{\cA}\color{red}\pi^*\pi_*}\]
with $\color{blue}\id_{\cA}\color{red}\id_{\cA}\color{black}\oplus\color{blue}\pi^*\pi_*\color{red}\pi^*\pi_*$ in degree $0$.
Define $\ev\colon\color{blue}\Theta^*\color{red}\Theta^!\color{black}\to\id_{\cA}$ by
\[\xymatrixcolsep{3.5pc}\xymatrix{
\color{blue}\pi^*\pi_*\color{red}\id_{\cA}\ar[d]\ar[r]^-{\left(\begin{smallmatrix}\varepsilon\\\e_{\color{blue}\pi^*\pi_*}\eta'\end{smallmatrix}\right)}&\color{blue}\id_{\cA}\color{red}\id_{\cA}\color{black}\oplus\color{blue}\pi^*\pi_*\color{red}\pi^*\pi_*
\ar[d]_{\left(\begin{smallmatrix}
-\id &
\varepsilon\circ\color{blue}\e_{\pi^*}\color{black}\varepsilon'\color{red}\e_{\pi_*}
\end{smallmatrix}\right)}
\ar[r]^-{\left(\begin{smallmatrix}-\eta'&\varepsilon\e_{\color{red}\pi^*\pi_*}\end{smallmatrix}\right)}&\color{blue}\id_{\cA}\color{red}\pi^*\pi_*\ar[d]\\
0\ar[r]&\id_{\cA}\ar[r]&0
}\]
We have
\[
\left(\begin{smallmatrix}
-\id &
\varepsilon\circ\e_{\pi^*}\varepsilon'\e_{\pi_*}
\end{smallmatrix}\right)
\circ
\left(\begin{smallmatrix}\varepsilon\\\e_{\pi^*\pi_*}\eta'\end{smallmatrix}\right)
= -\varepsilon + \varepsilon\circ\e_{\pi^*}\varepsilon'\e_{\pi_*}\circ\e_{\pi^*\pi_*}\eta'=0,
\]
where the last equality is by the definition of the unit $\eta'$ and the counit $\varepsilon'$. Thus, $\ev$ is a chain map. The functor $\color{green}\Theta^!\color{blue}\Theta^*$ is given by the complex
\[ \xymatrixcolsep{3.5pc}\xymatrix{\color{green}\id_{\cA}\color{blue}\pi^*\pi_*\ar[r]^-{\left(\begin{smallmatrix}\eta'\e_{\color{blue}\pi^*\pi_*} \\ -\varepsilon\end{smallmatrix}\right)}&\color{green}\pi^*\pi_*\color{blue}\pi^*\pi_*\color{black}\oplus\color{green}\id_{\cA}\color{blue}\id_{\cA}\ar[r]^-{\left(\begin{smallmatrix}\e_{\color{green}\pi^*\pi_*}\varepsilon & \eta'\end{smallmatrix}\right)} &\color{green}\pi^*\pi_*\color{blue}\id_{\cA}} \]
with $\color{green}\pi^*\pi_*\color{blue}\pi^*\pi_*\color{black}\oplus\color{green}\id_{\cA}\color{blue}\id_{\cA}$ in degree $0$.
Define $\coev\colon\id_{\cA}\to\color{green}\Theta^!\color{blue}\Theta^*$ by
\[ \xymatrixrowsep{4.5pc}\xymatrixcolsep{3.5pc}\xymatrix{
0\ar[r]\ar[d]&\id_{\cA}
\ar[d]_-{\left(\begin{smallmatrix}
\e_{\pi^*}\eta\e_{\pi_*}\circ\eta' \\
-\id
\end{smallmatrix}\right)}
\ar[r]&0\ar[d]\\
\color{green}\id_{\cA}\color{blue}\pi^*\pi_*\ar[r]^-{\left(\begin{smallmatrix}\eta'\e_{\color{blue}\pi^*\pi_*} \\ -\varepsilon\end{smallmatrix}\right)}&\color{green}\pi^*\pi_*\color{blue}\pi^*\pi_*\color{black}\oplus\color{green}\id_{\cA}\color{blue}\id_{\cA}\ar[r]^-{\left(\begin{smallmatrix}\e_{\color{green}\pi^*\pi_*}\varepsilon & \eta'\end{smallmatrix}\right)} &\color{green}\pi^*\pi_*\color{blue}\id_{\cA}
} \]
We have
\[\left(\begin{smallmatrix}\e_{\pi^*\pi_*}\varepsilon & \eta'\end{smallmatrix}\right)
\circ
\left(\begin{smallmatrix}
\e_{\pi_*}\eta\e_{\pi^*}\circ\eta' \\
-\id
\end{smallmatrix}\right)
=
\e_{\pi^*\pi_*}\varepsilon\circ\e_{\pi^*}\eta\e_{\pi_*}\circ\eta' - \eta' = 0,
\]
where the last equality is by the definition of the unit $\eta$ and the counit $\varepsilon$. Thus, $\coev$ is also a chain map.
The functor $\color{green}\Theta^!\color{blue}\Theta^*\color{red}\Theta^!$ is given by the complex (we omit the differential since it is no longer relevant to the discussion)
\begin{align*} 0\to &\color{green}\id_{\cA}\color{blue}\pi^*\pi_*\color{red}\id_{\cA}\color{black}\to\color{green}\id\color{blue}\pi^*\pi_*\color{red}\pi^*\pi_*\color{black}\oplus\color{green}\pi^*\pi_*\color{blue}\pi^*\pi_*\color{red}\id_{\cA}\color{black}\oplus\color{green}\id_{\cA}\color{blue}\id_{\cA}\color{red}\id_{\cA}\color{black} \to \\ 
&\to \color{green}\pi^*\pi_*\color{blue}\pi^*\pi_*\color{red}\pi^*\pi_*\color{black}\oplus\color{green}\id_{\cA}\color{blue}\id_{\cA}\color{red}\pi^*\pi_*\color{black}\oplus\color{green}\pi^*\pi_*\color{blue}\id_{\cA}\color{red}\id_{\cA}\color{black}\to \color{green}\pi^*\pi_*\color{blue}\id_{\cA}\color{red}\pi^*\pi_*\color{black}\to 0 \end{align*}
with $\color{green}\pi^*\pi_*\color{blue}\pi^*\pi_*\color{red}\pi^*\pi_*\color{black}\oplus\color{green}\id_{\cA}\color{blue}\id_{\cA}\color{red}\pi^*\pi_*\color{black}\oplus\color{green}\pi^*\pi_*\color{blue}\id_{\cA}\color{red}\id_{\cA}$ in degree $0$. The composition $\xymatrix{\color{red}\Theta^!\ar[r]^-{\coev\e_{\color{red}\Theta^!}}&\color{green}\Theta^!\color{blue}\Theta^*\color{red}\Theta^!\ar[r]^-{\e_{\color{green}\Theta^!}\ev}&\color{green}\Theta^!}$ is given by
\[\xymatrix{
\color{red}\id_{\cA}\ar[r]\ar[d]_{
\left(\begin{smallmatrix}
0 \\ \e_{\color{green}\pi^*}\color{black}\eta\e_{\color{blue}\pi_*}\circ\eta' \\
-\id
\end{smallmatrix}\right)
}
&\color{red}\pi^*\pi_*\ar[d]_{
\left(\begin{smallmatrix}
\e_{\color{green}\pi^*}\eta\e_{\color{blue}\pi_*\color{red}\pi^*\pi_*}\circ\eta'\e_{\color{red}\pi^*\pi_*} \\
-\e_{\color{red}\pi^*\pi_*} \\
0
\end{smallmatrix}\right)
}\\
\color{green}\id_{\cA}\color{blue}\pi^*\pi_*\color{red}\pi^*\pi_*\color{black}\oplus\color{green}\pi^*\pi_*\color{blue}\pi^*\pi_*\color{red}\id_{\cA}\color{black}\oplus\color{green}\id_{\cA}\color{blue}\id_{\cA}\color{red}\id_{\cA}\color{black}\ar[r]\ar[d]_{
\left(\begin{smallmatrix}
\varepsilon\circ\e_{\color{blue}\pi^*}\varepsilon'\e_{\color{red}\pi^*}
&0
&-\id
\end{smallmatrix}\right)}
&\color{green}\pi^*\pi_*\color{blue}\pi^*\pi_*\color{red}\pi^*\pi_*\color{black}\oplus\color{green}\id_{\cA}\color{blue}\id_{\cA}\color{red}\pi^*\pi_*\color{black}\oplus\color{green}\pi^*\pi_*\color{blue}\id_{\cA}\color{red}\id_{\cA}\color{black}\ar[d]_{
\left(\begin{smallmatrix}
\e_{\color{green}\pi^*\pi_*}\varepsilon\circ\e_{\color{green}\pi^*\pi_*\color{blue}\pi^*}\varepsilon'\e_{\color{red}\pi_*}
&0
&-\e_{\color{green}\pi^*\pi_*}
\end{smallmatrix}\right)}\\
\color{green}\id_{\cA}\ar[r]&\color{green}\pi^*\pi_*
}\]
It is evident that the vertical composition on the left is the identity. Furthermore,
\begin{align*}
&\left(\begin{smallmatrix}\e_{\pi^*\pi_*}\varepsilon\circ\e_{\pi^*\pi_*\pi^*}\varepsilon'\e_{\pi_*} & 0 & -\e_{\pi^*\pi_*}\end{smallmatrix}\right)\circ
\left(\begin{smallmatrix}\e_{\pi^*}\eta\e_{\pi_*\pi^*\pi_*}\circ\eta'\e_{\pi^*\pi_*} \\
-\e_{\pi^*\pi_*} \\ 0
\end{smallmatrix}\right) \\ 
&=\e_{\pi^*\pi_*}\varepsilon\circ\e_{\pi^*\pi_*\pi^*}\varepsilon'\e_{\pi_*} \circ \e_{\pi^*}\eta\e_{\pi_*\pi^*\pi_*}\circ\eta'\e_{\pi^*\pi_*} \\
&=\e_{\pi^*\pi_*}\varepsilon\circ\e_{\pi^*}\eta\e_{\pi^*}\circ\e_{\pi^*}\varepsilon'\e_{\pi^*}\circ \eta'\e_{\pi^*\pi_*} \\
&= \e_{\pi^*\pi_*}.
\end{align*}
The second equality is due to $\eta$ and $\varepsilon'$ being natural transformations. The third equality is by the definition of the units $\eta,\eta'$ and the counits $\varepsilon,\varepsilon'$.
So the vertical composition on the right is also the identity. Thus, the composition
$\xymatrix{\Theta^!\ar[r]^-{\coev\e_{\Theta^!}}&\Theta^!\Theta^*\Theta^!\ar[r]^-{\e_{\Theta^!}\ev}&\Theta^!}$ is the identity on $\Theta^!$.
A similar computation shows that the composition $\xymatrix{\Theta^*\ar[r]^-{\e_{\Theta^*}\coev}&\Theta^*\Theta^!\Theta^*\ar[r]^-{\ev\e_{\Theta^*}}&\Theta^*}$ is the identity on $\Theta^*$. Hence, $\Theta^*$ is left adjoint to $\Theta^!$.

Now let $L\in\cA$ be simple. Then $\Theta^*\Theta^!L$ is the complex
\[\xymatrixcolsep{3.5pc}\xymatrix{
0\ar[r]&\pi^*\pi_*L\ar[r]^-{\left(\begin{smallmatrix}\varepsilon_L\\ \pi^*\pi_*(\eta'_L)\end{smallmatrix}\right)}&L\oplus\pi^*\pi_*\pi^*\pi_*L\ar[r]^-{\left(\begin{smallmatrix}-\eta'_L&\varepsilon_{\pi^*\pi_*L}\end{smallmatrix}\right)}&\pi^*\pi_*L\ar[r]& 0}\]
with $L\oplus\pi^*\pi_*\pi^*\pi_*L$ in degree $0$.
By definition of the unit $\eta'$ and the counit $\varepsilon'$, the composition 
\[\xymatrixcolsep{3.5pc}\xymatrix{\pi^*\pi_*L \ar[r]^-{\pi^*\pi_*(\eta'_L)} & \pi^*\pi_*\pi^*\pi_*L \ar[r]^-{\pi^*(\varepsilon'_{\pi_*L})} &\pi^*\pi_*L}\]
is the identity on $\pi^*\pi_*L$. Thus,
$\xymatrixcolsep{3.5pc}\xymatrix{
\pi^*\pi_*L\ar[r]^-{\left(\begin{smallmatrix}\varepsilon_L\\ \pi^*\pi_*(\eta'_L)\end{smallmatrix}\right)}&L\oplus\pi^*\pi_*\pi^*\pi_*L
}$
is a monomorphism. Similarly,
$\xymatrixcolsep{4pc}\xymatrix{
L\oplus\pi^*\pi_*\pi^*\pi_*L\ar[r]^-{\left(\begin{smallmatrix}-\eta'_L&\varepsilon_{\pi^*\pi_*L}\end{smallmatrix}\right)}&\pi^*\pi_*L
}$
is an epimorphism. Thus, $\Theta^*\Theta^!$ is isomorphic (in $\Db(\cA)$) to its zeroth cohomology $H^0(\Theta^*\Theta^!L)$.
Assume that $[\pi^*\pi_*\pi^*\pi_*L]=2[\pi^*\pi_*L]$ in $K_0(\cA)$ for each simple $L\in\cA$, then
\[[H^0(\Theta^*\Theta^!L)]=[\Theta^*\Theta^! L] = [\pi^*\pi_*\pi^*\pi_*L]+[L] - 2[\pi^*\pi_*L]
= [L].\]
This forces $H^0(\Theta^*\Theta^!L)$ and hence $\Theta^*\Theta^!L$ to be isomorphic to $L$. Lemma \ref{keylem} (ii) gives that $\ev\colon\Theta^*\Theta^!L\to L$ is non-zero. Since $L$ is simple, this implies that $\ev\colon\Theta^*\Theta^!L\to L$ is an isomorphism. As every object in $\cA$ is of finite length, every object in $\cA$ is filtered by simple objects. Thus, every object in $\Db(\cA)$ is filtered by shifts of the simple objects in $\cA$. Applying Lemma \ref{devissagetriang} now gives that $\ev\colon\Theta^*\Theta^!\to\id_{\cA}$ is an isomorphism.
A similar argument shows that $\coev\colon\id_{\cA}\to\Theta^!\Theta^*$ is an isomorphism. Hence, $\Theta^*$ and $\Theta^!$ are mutually inverse derived equivalences.
\end{example}

\section{Category $\cO$ and translation functors}\label{s:catO}
\subsection{}
Let $\fg\supset \fb\supset \fh$ be a complex semisimple Lie algebra, a Borel subalgebra and a Cartan subalgebra contained in it, respectively. Let $U(\fg)$ denote the universal enveloping algebra of $\fg$ and let $\fz\subset U(\fg)$ denote the center. Let $\cO$ be the BGG-category $\cO$. That is, $\cO$ consists of all finitely generated $U(\fg)$-modules which are locally finite over $\fb$ and semisimple over $\fh$. For $\lambda\in\fh^*$ let $\Verma{\lambda}= U(\fg)\otimes_{\fb}\CC_{\lambda}$ be the Verma module; here $\CC_{\lambda}$ is the one dimensional $\fh$-module given by $\lambda$ and extended to $\fb$ trivially. Let $\simple{\lambda}$ denote the unique simple quotient of $\Verma{\lambda}$. It is well known (see \cite{BGG}) that every object in $\cO$ has finite length and that if $L\in\cO$ is simple, then $L\simeq \simple{\lambda}$ for some $\lambda\in\fh^*$. Let $\cdot^{\vee}\colon \cO \to \cO$ denote the contravariant duality on $\cO$. Namely, if $M\in\cO$, then $M^{\vee}$ is the vector space of linear functions $M\to \CC$ with finite dimensional support. The $\fg$-action on $M^{\vee}$ is given by the $\fg$-action on $M$ twisted by the Chevalley anti-automorphism. If $L\in\cO$ is simple, then $L^{\vee}\simeq L$. Furthermore, $\cdot^{\vee\vee}\simeq \id$. The modules $\coVerma{\lambda}$ will be referred to as dual Verma modules. 

\subsection{}\label{ss:weyl}
Let $W$ be the Weyl group of $\fg \supset \fb$, let $\ell \colon W \to \ZZ_{\geq 0}$ denote the length function and let $\leq$ denote the Bruhat order on $W$. In particular, $x<y$ means $x\leq y$ and $x\neq y$. The identity element in $W$ is denoted by $e$.
Let $\rho\in\fh^*$ be the half sum of positive roots and let $w_0\in W$ be the longest element of the Weyl group. For $w\in W$ and $\lambda\in \fh^*$ put $w\cdot \lambda = w(\lambda+\rho)-\rho$.

\subsection{}Let $\lambda\in\fh^*$ be integral dominant but perhaps singular. In other words, $\lambda$ is integral and $\lambda+\rho$ lies in the closure of the dominant Weyl chamber. Let $\cO_{\lambda}\subset \cO$ be full subcategory consisting of those objects in $\cO$ whose (generalized) infinitesimal character coincides with the one of $\simple{\lambda}$. That is, those objects which have the same annihilator in $\fz$ as the module $\simple{\lambda}$. For instance, the so called principal block $\cO_0$ consists of objects with trivial infinitesimal character.

\subsection{}Let $s\in W$ be a simple reflection. Let $\lambda\in \fh^*$ be an integral dominant weight such that the stabilizer of $\lambda$ under the `dot-action' of $W$ (see \S\ref{ss:weyl}) is $\{e, s\}$. Let $\pi_{s*}\colon \cO_0 \to \cO_{\lambda}$ be the functor of translation onto the $s$-wall and let $\pi_s^*\colon \cO_{\lambda}\to \cO_0$ be the functor of translation off the $s$-wall. The functor $\pi_s^*$ is both left and right adjoint to $\pi_{s*}$.

\subsection{}Let $x\in W$. To lighten notation we set
\[ \Verma{x} = \Verma{w_0x^{-1}\cdot 0} \qquad\mbox{and} \qquad \simple{x} = \simple{w_0x^{-1}\cdot 0}.\]
The following is well known:
\begin{prop}[{\cite[Satz.\ 2.10(i), Thm.\ 2.11, Satz 2.17]{Ja}}]\label{translationeffect}
Let $s\in W$ be a simple reflection and let $x\in W$. Then
\begin{enumerate}
\item $\pi_s^*\pi_{s*}\Verma{x}$ has a filtration with subquotients isomorphic to $\Verma{x}$ and $\Verma{sx}$ each occuring with multiplicity one.
\item $\pi_s^*\pi_{s*}\coVerma{x}$ has a filtration subquotients isomorphic to $\coVerma{x}$ and $\coVerma{sx}$ each occuring with multiplicity one.
\item If $sx < x$, then $\pi_s^*\pi_{s*}\simple{x} = 0$.
\end{enumerate}
\end{prop}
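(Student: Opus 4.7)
The plan is to reduce the three parts to the standard realization of translation functors as tensoring with a finite-dimensional $\fg$-module followed by block projection. Up to natural isomorphism one has $\pi_{s*} \simeq \pr_\lambda(- \otimes V)$ and $\pi_s^* \simeq \pr_0(- \otimes V^*)$, where $V$ is the finite-dimensional simple $\fg$-module whose extremal weight connects $\cO_0$ and $\cO_\lambda$; both $\pr_?$ and $- \otimes V$ are exact. The main combinatorial input is the classical tensor identity: $\Verma{\mu}\otimes V$ admits a filtration with subquotients $\Verma{\mu+\nu}$ as $\nu$ runs over the weights of $V$ counted with multiplicity, ordered so that higher weights appear toward the top of the filtration. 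An identical statement holds for dual Vermas, ordered the other way.

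For (i), I would first apply the tensor identity to $\Verma{x}$ and project onto $\cO_\lambda$. The hypothesis that the dot-stabilizer of $\lambda$ is exactly $\{e,s\}$ ensures that a single weight of $V$ contributes to the block $\cO_\lambda$, so $\pi_{s*}\Verma{x}$ is a single Verma module $\Verma{\mu}\in\cO_\lambda$. Applying the tensor identity a second time to $\Verma{\mu}\otimes V^*$ and projecting onto $\cO_0$, exactly two weights of $V^*$ land in the orbit $W\cdot 0$, and an orbit bookkeeping identifies the resulting Verma subquotients as $\Verma{x}$ and $\Verma{sx}$, each with multiplicity one. The order of appearance in the filtration is forced by the dominance order on weights.

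Part (ii) follows from (i) by a duality argument. The functor $\pi_s^*\pi_{s*}$ commutes with the contravariant duality $\cdot^\vee$ up to natural isomorphism, since both block projection and tensoring with a finite-dimensional module commute with $\cdot^\vee$ (using $(M\otimes V)^\vee \simeq M^\vee \otimes V^*$ and the compatibility between $V$ and $V^*$ under the two translation directions). Because $\Verma{y}^\vee \simeq \coVerma{y}$, dualising the filtration produced in (i) gives the asserted dual Verma flag on $\pi_s^*\pi_{s*}\coVerma{x}$ (with the order of subquotients reversed).

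The main obstacle is (iii). By exactness of $\pi_s^*$ and the faithfulness provided by Lemma \ref{keylem}, it suffices to show $\pi_{s*}\simple{x} = 0$ when $sx < x$. I would use adjunction combined with (ii): for any $N \in \cO_\lambda$ the adjunction $(\pi_s^*, \pi_{s*})$ gives $\Hom(\pi_{s*}\simple{x}, N) \simeq \Hom(\simple{x}, \pi_s^* N)$, so it is enough to show that $\simple{x}$ does not embed into $\pi_s^* N$ for $N$ running over the dual Vermas of $\cO_\lambda$. By (ii), each such $\pi_s^*\coVerma{\mu}$ has a dual Verma flag with subquotients $\coVerma{y}$ for specific $y$; inspecting the socle of such a flag and translating the condition $sx<x$ via the convention $\simple{x} = \simple{w_0 x^{-1}\cdot 0}$ shows that $\simple{x}$ is not in the socle. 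This last combinatorial check — matching the author's inverted indexing to the statement that $s$ is a descent on the appropriate side — is the genuine obstacle, and it is the step where, at the end of the day, one is implicitly using the specific form of the adjunction and the length-reversing effect of $w_0$.
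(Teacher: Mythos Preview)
The paper does not prove this proposition at all: it is stated with a citation to Jantzen and used as a black box. So there is no ``paper's own proof'' to compare against; what follows is an assessment of your sketch on its own merits.

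For (i) your outline is exactly the classical argument in Jantzen: apply the standard Verma filtration on $M\otimes V$ and track which subquotients survive the block projections. Part (ii) via duality is also standard, though strictly speaking one should note $V^*\simeq V$ is not needed --- what matters is that translation functors, being built from $(-\otimes E)$ with $E$ finite-dimensional and self-dual under the Chevalley twist, commute with $(\cdot)^\vee$.

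For (iii) there are two issues. First, a minor slip: the isomorphism $\Hom(\pi_{s*}\simple{x},N)\simeq\Hom(\simple{x},\pi_s^*N)$ comes from the adjunction $(\pi_{s*},\pi_s^*)$, not $(\pi_s^*,\pi_{s*})$; this is available because $\pi_s^*$ is biadjoint, but you invoked the wrong one. Second, and more substantively, the ``combinatorial check'' you flag as the obstacle is indeed the entire content of (iii), and your outline does not actually resolve it. Knowing that $\pi_s^*\coVerma\mu$ has a two-step dual Verma flag does not by itself determine its socle; to rule out $\simple{x}\hookrightarrow\pi_s^*\pi_{s*}\coVerma{x}$ you need to know which of $\coVerma{x},\coVerma{sx}$ sits at the bottom, and even then you must control a possible lift through an $\Ext^1$. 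The usual direct argument avoids this detour: one shows that $\pi_{s*}$ takes the inclusion $\Verma{sx}\hookrightarrow\Verma{x}$ (available precisely when $sx<x$ in the paper's indexing) to an isomorphism of Verma modules in $\cO_\lambda$, and then the surjection $\Verma{x}/\Verma{sx}\twoheadrightarrow\simple{x}$ forces $\pi_{s*}\simple{x}=0$ by exactness.
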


\subsection{}\label{s:definetheta}
Fix adjunctions $(\pi_s^*, \pi_{s*})$ and $(\pi_{s*}, \pi^*_s)$. Write $\varepsilon$ for the counit of the pair $(\pi_s^*, \pi_{s*})$ and $\eta'$ for the unit of the pair $(\pi_{s*},\pi_s^*)$.
Following \cite[\S4.1.5]{Ro} and \cite{Ri} set
\[ \Theta_s^* = 0\to \pi_s^*\pi_{s*} \mapright{\varepsilon} \id \to 0 \quad \mbox{and}\quad \Theta_s^!= 0\to\id\mapright{\eta'}\pi_s^*\pi_{s*}\to 0, \]
with $\pi_s^*\pi_{s*}$ in degree $0$ in both cases.

\begin{prop}\label{dequivO}The functors $\Theta_s^*$ and $\Theta_s^!$ are mutually inverse self-equivalences of $\Db(\cO_0)$.
\end{prop}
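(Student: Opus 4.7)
The plan is to reduce this to a direct application of Theorem \ref{mainthm}, with $\cA = \cO_0$, $\cB = \cO_\lambda$, and $\pi^* = \pi^! = \pi_s^*$. All the structural hypotheses are already in place: every object in $\cO_0$ has finite length by the standard BGG theory; the translation functors $\pi_{s*}$ and $\pi_s^*$ are exact; and we have fixed the two adjunctions $(\pi_s^*,\pi_{s*})$ and $(\pi_{s*},\pi_s^*)$ in \S\ref{s:definetheta}. The complexes $\Theta_s^*$ and $\Theta_s^!$ have exactly the shape of the $\Theta^*$ and $\Theta^!$ in Theorem \ref{mainthm}, and the adjunction $(\Theta_s^*,\Theta_s^!)$ comes for free from Lemma \ref{transposelemma} and Prop.\ \ref{complexesadjoint}.

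What remains is to check the Grothendieck-group identities in conditions (i) and (ii) of Theorem \ref{mainthm}. Since $\pi^!=\pi^*$ here, both conditions reduce to the single statement
\[ [\pi_s^*\pi_{s*}\pi_s^*\pi_{s*} X] = 2\,[\pi_s^*\pi_{s*} X] \quad\text{in } K_0(\cO_0) \text{ for every } X\in\cO_0. \]
Because $\pi_s^*\pi_{s*}$ is exact, it induces a $\ZZ$-linear endomorphism $T$ of $K_0(\cO_0)$, and the identity above is equivalent to $T^2 = 2T$. It therefore suffices to verify this on any $\ZZ$-basis, and the most convenient one is the Verma basis $\{[\Verma{x}] : x\in W\}$.

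On this basis Prop.\ \ref{translationeffect}(i) gives
\[ T[\Verma{x}] = [\pi_s^*\pi_{s*}\Verma{x}] = [\Verma{x}] + [\Verma{sx}], \]
so $T = I + S$ where $S[\Verma{x}] = [\Verma{sx}]$ satisfies $S^2 = I$ (because $s^2=e$). A one-line computation then gives $T^2 = I + 2S + S^2 = 2I + 2S = 2T$, exactly the desired identity. I view this verification as the only real content of the proof; it is straightforward, but it is the place where the specific combinatorics of translation functors (as opposed to the formal setup of Theorem \ref{mainthm}) actually enters.

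With the $K_0$-identity in hand, Theorem \ref{mainthm}(i) gives that the counit $\ev\colon \Theta_s^*\Theta_s^!\to\id$ is an isomorphism on $\Db(\cO_0)$, and Theorem \ref{mainthm}(ii) gives that the unit $\coev\colon\id\to\Theta_s^!\Theta_s^*$ is an isomorphism on $\Db(\cO_0)$. Hence $\Theta_s^*$ and $\Theta_s^!$ are mutually inverse auto-equivalences of $\Db(\cO_0)$, as claimed. No step beyond the Verma-module computation should present any difficulty.
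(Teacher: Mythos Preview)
Your proof is correct and follows essentially the same route as the paper: both reduce to Theorem \ref{mainthm} by verifying the identity $[\pi_s^*\pi_{s*}\pi_s^*\pi_{s*}X]=2[\pi_s^*\pi_{s*}X]$ on the Verma basis via Prop.\ \ref{translationeffect}(i). Your presentation is slightly more explicit (writing $T=I+S$ with $S^2=I$), but the argument is the same.
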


\begin{proof}
Prop.\ \ref{translationeffect} (i) implies that at the level of $K_0(\cO_0)$, $[\pi_s^*\pi_{s*}\Verma w] = [\Verma w] + [\Verma{sw}]$. As the classes of Verma modules give a basis of $K_0(\cO_0)$, we deduce that 
$[\pi_s^*\pi_{s*}\pi_s^*\pi_{s*}X]=2[\pi_s^*\pi_{s*}X]$
for all $X\in\cO_0$. Applying Thm.\ \ref{mainthm} gives the desired result.
\end{proof}

\begin{lemma}\label{adjinjective}Let $s\in W$ be a simple reflection and let $x\in W$ be arbitrary.
\begin{enumerate}
\item The morphism $\eta'\colon  \Verma x\to \pi_s^*\pi_{s*}\Verma x$ is injective.
\item The morphism $\varepsilon\colon \pi^*_s\pi_{s*}\coVerma x \to \coVerma x$ is surjective.
\end{enumerate}
\end{lemma}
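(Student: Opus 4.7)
The plan for (i) is to exploit the fact that, in the paper's indexing, $\Verma{x}$ has a unique simple submodule equal to $\simple{e}$: in category $\cO$ every Verma module has simple socle given by the antidominant simple in its central character, and here the antidominant weight $w_0\cdot 0$ corresponds to $e$. Crucially, $\simple{e}$ coincides with $\Verma{e}$ itself, because the antidominant Verma is already simple.

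With this in hand, argue as follows. The kernel of $\eta'_{\Verma{x}}$ is a submodule of $\Verma{x}$, so if it is nonzero it must contain the unique simple submodule $\simple{e}$. It therefore suffices to show that the restriction of $\eta'_{\Verma{x}}$ along the socle inclusion $\iota\colon \simple{e}\hookrightarrow \Verma{x}$ is nonzero. By naturality of $\eta'$, this restriction factors as
\[
\simple{e}\xrightarrow{\eta'_{\simple{e}}}\pi_s^*\pi_{s*}\simple{e}\xrightarrow{\pi_s^*\pi_{s*}(\iota)}\pi_s^*\pi_{s*}\Verma{x}.
\]
The functor $\pi_s^*\pi_{s*}$ is exact, hence $\pi_s^*\pi_{s*}(\iota)$ is injective, so it suffices to show $\eta'_{\simple{e}}\ne 0$. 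By Lemma \ref{keylem} (i) this reduces to $\pi_{s*}\simple{e}\ne 0$, which holds because $\simple{e}=\Verma{e}$ and the translation to the wall of a Verma module is a (nonzero) Verma module on the wall.

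For (ii), the cleanest route is duality: the Chevalley duality $\cdot^{\vee}$ is an exact contravariant self-equivalence of $\cO$ that commutes with the translation functors, so it carries the adjunction $(\pi_{s*},\pi_s^*)$ to the adjunction $(\pi_s^*,\pi_{s*})$ and identifies---up to an irrelevant automorphism of $\pi_s^*\pi_{s*}$, since any two choices of the adjunction differ by one---the counit $\varepsilon_{\coVerma{x}}$ with $(\eta'_{\Verma{x}})^{\vee}$. Applying $\cdot^{\vee}$ to the injection of (i) then gives the desired surjection. Alternatively, one can run the dual argument directly, using that $\coVerma{x}$ has simple head $\simple{e}$ and invoking Lemma \ref{keylem} (ii) in place of (i). The main substantive input throughout is the identification $\simple{e}=\Verma{e}$ together with the fact that translation to the wall sends Vermas to Vermas; everything else is naturality of adjunctions and exactness of translation functors.
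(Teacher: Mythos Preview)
Your argument is correct and takes a genuinely different route from the paper's. The paper uses that $\pi_s^*\pi_{s*}\Verma{x}$ admits a two-step Verma filtration (Prop.~\ref{translationeffect}(i)) together with Dixmier's result that any nonzero map between Verma modules is injective \cite[Thm.~7.6.6]{Dix}, to conclude that $\eta'_{\Verma{x}}$ is either zero or injective; Lemma~\ref{keylem} then rules out zero. You instead exploit the simple socle $\simple{e}=\Verma{e}$ of $\Verma{x}$: naturality of $\eta'$ and exactness of $\pi_s^*\pi_{s*}$ reduce the question to $\eta'_{\simple{e}}\neq 0$, which follows from Lemma~\ref{keylem} and $\pi_{s*}\Verma{e}\neq 0$. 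Your approach trades the external citation to Dixmier and the full filtration structure of the target for the single structural fact that Verma modules have simple socle equal to the antidominant Verma; the paper's approach, by contrast, implicitly packages the slightly more general principle that a nonzero map from a Verma into any Verma-filtered module is injective. For (ii) both your duality reduction and your direct dual-argument alternative are fine; the paper simply remarks that the proof is similar.
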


\begin{proof}We will only show (i), the proof of (ii) is similar. Prop.\ \ref{translationeffect} (i) implies that $\pi_s^*\pi_{s*}\Verma w$ is non-zero and has a filtration with subquotients isomorphic to Verma modules. According to \cite[Thm.\ 7.6.6]{Dix} any morphism between Verma modules is either $0$ or injective. We infer that $\eta'_s\colon \Verma w\to \pi_s^*\pi_{s*}\Verma w$ is either zero or injective. Lemma \ref{keylem} implies that the map is injective.
\end{proof}

\begin{prop}\label{equivspreservevermas}Let $s\in W$ be a simple reflection and let $M\in \cO_0$.
\begin{enumerate}
\item
If $M$ admits a filtration with subquotients isomorphic to Verma modules, then $\Theta^!_s M$ is in $\cO_0$, i.e., the complex $\Theta^!_s M$ has cohomology concentrated in degree $0$.
\item
If $M$ admits a filtration with subquotients isomorphic to dual Verma modules, then $\Theta^*_s M$ is in $\cO_0$.
\end{enumerate}
\end{prop}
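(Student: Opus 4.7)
The plan is to translate each cohomological vanishing statement into a concrete injectivity/surjectivity statement for the adjunction morphism, verify it for the basic building blocks (Verma or dual Verma modules) using Lemma \ref{adjinjective}, and then bootstrap along the given flag via the exactness of $\pi_s^*\pi_{s*}$.

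For (i): unpacking the definition from \S\ref{s:definetheta}, $\Theta_s^! M$ is the two-term complex $M \xrightarrow{\eta'_M} \pi_s^*\pi_{s*} M$ with $\pi_s^*\pi_{s*} M$ in degree $0$ and $M$ in degree $-1$. Hence $\Theta_s^! M$ having cohomology concentrated in degree $0$ is equivalent to $\eta'_M$ being injective. I would proceed by induction on the length $n$ of the Verma flag $0 = M_0 \subset M_1 \subset \cdots \subset M_n = M$. For $n = 1$, $M$ is a Verma module and injectivity of $\eta'_M$ is exactly Lemma \ref{adjinjective}(i). For the inductive step, apply the exact functor $\pi_s^*\pi_{s*}$ to the short exact sequence $0 \to M_{n-1} \to M \to \Verma{x_n} \to 0$ to get a commutative diagram with exact rows
\[
\xymatrix{
0 \ar[r] & M_{n-1} \ar[r] \ar[d]_{\eta'_{M_{n-1}}} & M \ar[r] \ar[d]_{\eta'_M} & \Verma{x_n} \ar[r] \ar[d]_{\eta'_{\Verma{x_n}}} & 0 \\
0 \ar[r] & \pi_s^*\pi_{s*} M_{n-1} \ar[r] & \pi_s^*\pi_{s*} M \ar[r] & \pi_s^*\pi_{s*} \Verma{x_n} \ar[r] & 0
}
\]
The outer vertical arrows are injective by the inductive hypothesis and by Lemma \ref{adjinjective}(i), so the snake lemma (equivalently, a short diagram chase) forces $\eta'_M$ to be injective as well.

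For (ii): symmetrically, $\Theta_s^* M$ is the complex $\pi_s^*\pi_{s*}M \xrightarrow{\varepsilon_M} M$ with $\pi_s^*\pi_{s*}M$ in degree $0$ and $M$ in degree $1$, so concentration in degree $0$ is equivalent to $\varepsilon_M$ being surjective. The same inductive scheme, now along a dual Verma flag of $M$ and using Lemma \ref{adjinjective}(ii) as the base case, yields surjectivity of $\varepsilon_M$ from surjectivity on each dual Verma subquotient (here the snake lemma gives the cokernel of the middle map as an extension of zero by zero).

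I do not expect a genuine obstacle: the only subtle point is keeping straight the homological grading convention so that one correctly identifies "cohomology in degree $0$" with injectivity of $\eta'$ (respectively surjectivity of $\varepsilon$). Everything else is formal, relying on two inputs already established in the excerpt: exactness of $\pi_s^*\pi_{s*}$ (a composition of exact functors) and Lemma \ref{adjinjective}.
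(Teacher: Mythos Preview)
Your proposal is correct and follows the same approach as the paper: the paper's proof is simply the one-line ``Follows from Lemma \ref{adjinjective}'', and what you have written is precisely the routine unpacking of that sentence (reduce to injectivity of $\eta'$/surjectivity of $\varepsilon$, use the lemma on each subquotient, and induct along the flag using exactness of $\pi_s^*\pi_{s*}$ and the snake lemma).
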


\begin{proof}Follows from Lemma \ref{adjinjective}.
\end{proof}

\begin{prop}\label{behaviourofequivs}Let $s\in W$ be a simple reflection and let $x\in W$.
\begin{enumerate}
\item If $x<sx$, then $\Theta^*_s\Verma{x} \simeq \Verma{sx}$.
\item If $x<sx$, then $\Theta^!_s \coVerma x \simeq \coVerma{sx}$.
\item If $sx<x$, then $\Theta^!_s \simple{x} \simeq \simple{x}[1]$ (or equivalently $\Theta^*_s \simple{x} \simeq \simple{x}[-1]$).
\end{enumerate}
\end{prop}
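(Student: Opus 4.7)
The plan is to handle (iii) directly, prove (i) by a careful analysis of the counit $\varepsilon$ on $\Verma{x}$, and deduce (ii) from (i) via the contravariant duality $\cdot^{\vee}$.

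Part (iii) is immediate from Prop.\ \ref{translationeffect} (iii): when $sx<x$ we have $\pi_s^*\pi_{s*}\simple{x}=0$, so $\Theta_s^!\simple{x}=[\simple{x}\to 0]$ with $\simple{x}$ in degree $-1$, i.e., $\simple{x}[1]$. The equivalent form $\Theta_s^*\simple{x}\simeq\simple{x}[-1]$ follows by the same inspection (or from Prop.\ \ref{dequivO}).

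For (i), the plan is to show that $\varepsilon\colon\pi_s^*\pi_{s*}\Verma{x}\to\Verma{x}$ is surjective with kernel $\Verma{sx}$, yielding $\Theta_s^*\Verma{x}\simeq\Verma{sx}$ in degree $0$. By Prop.\ \ref{translationeffect} (i), $\pi_s^*\pi_{s*}\Verma{x}$ fits in a short exact sequence $0\to\Verma{a}\to\pi_s^*\pi_{s*}\Verma{x}\to\Verma{b}\to 0$ with $\{a,b\}=\{x,sx\}$. A weight comparison --- using the convention $\Verma{w}=\Verma{w_0 w^{-1}\cdot 0}$, one computes the difference of highest weights $(w_0(sx)^{-1}\cdot 0)-(w_0 x^{-1}\cdot 0)=-w_0 x^{-1}(\alpha_s)$, which is positive when $x<sx$ via the identity $\ell(w_0 y)=\ell(w_0)-\ell(y)$ --- shows that $\Verma{sx}$ has the strictly larger highest weight. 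Hence the highest weight vector of $\pi_s^*\pi_{s*}\Verma{x}$ generates a submodule isomorphic to $\Verma{sx}$, forcing $a=sx$ and $b=x$. Applying $\Hom(-,\Verma{x})$ to the SES, and noting that $\Hom(\Verma{sx},\Verma{x})=0$ (any nonzero morphism between Vermas is injective, contradicting the weight comparison), yields $\dim\Hom(\pi_s^*\pi_{s*}\Verma{x},\Verma{x})\leq 1$; Lemma \ref{keylem} (ii) supplies $\varepsilon\neq 0$, so $\varepsilon$ spans this space. The quotient map from the SES is another nonzero element, hence coincides with $\varepsilon$ up to scalar, giving surjectivity with $\ker\varepsilon\simeq\Verma{sx}$.

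Part (ii) follows from (i) by dualizing. Since $\pi_{s*}$ and $\pi_s^*$ commute with $\cdot^{\vee}$, $\Verma{x}^{\vee}=\coVerma{x}$, and the counit $\varepsilon$ of $(\pi_s^*,\pi_{s*})$ dualizes to the unit $\eta'$ of $(\pi_{s*},\pi_s^*)$, dualizing the complex $\Theta_s^*\Verma{x}$ term by term produces a complex isomorphic to $\Theta_s^!\coVerma{x}$; applying $\cdot^{\vee}$ to the isomorphism from (i) gives $\Theta_s^!\coVerma{x}\simeq\coVerma{sx}$. The main obstacle throughout is the weight comparison in step (i), which is where the Bruhat-order hypothesis $x<sx$ enters non-trivially; the remaining homological manipulations are purely formal consequences of adjunction and the one-dimensionality of the relevant $\Hom$ space.
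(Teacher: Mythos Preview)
Your argument is correct, but it takes a different route from the paper for parts (i) and (ii). For (iii) you do exactly what the paper does.

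For (i), the paper instead proves $\Theta^!_s\Verma{sx}\simeq\Verma{x}$ and then invokes Prop.~\ref{dequivO} to invert. Concretely: Prop.~\ref{translationeffect}~(i) (in its precise form from Jantzen) says $\pi_s^*\pi_{s*}\Verma{sx}$ sits in a short exact sequence $0\to\Verma{sx}\to\pi_s^*\pi_{s*}\Verma{sx}\to\Verma{x}\to 0$; Lemma~\ref{adjinjective}~(i) says $\eta'\colon\Verma{sx}\to\pi_s^*\pi_{s*}\Verma{sx}$ is injective, and one identifies it with the left map of the sequence, so the cokernel --- which is $\Theta^!_s\Verma{sx}$ up to quasi-isomorphism --- is $\Verma{x}$. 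For (ii) the paper argues ``similarly'' using Lemma~\ref{adjinjective}~(ii) directly on dual Vermas, rather than deducing (ii) from (i) via $\cdot^{\vee}$ as you do.

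The trade-off: the paper's approach packages the key non-formal step into Lemma~\ref{adjinjective} (the Dixmier fact that nonzero maps between Vermas are injective) and then reads everything off, but leaves implicit why the injection $\eta'$ agrees with the structural inclusion of the short exact sequence. Your approach makes exactly this point explicit by computing $\dim\Hom(\pi_s^*\pi_{s*}\Verma{x},\Verma{x})=1$, so that $\varepsilon$ is forced to coincide with the quotient map up to scalar. Your weight comparison recovers the filtration order that the paper simply imports from Jantzen; note that the step ``the highest weight vector generates a copy of $\Verma{sx}$'' deserves one more sentence (a priori it generates only a quotient of $\Verma{sx}$; the case analysis on the two possible filtration orders, together with $\Hom(\Verma{sx},\Verma{x})=0$, is what forces it to be all of $\Verma{sx}$ as a submodule). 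Your duality argument for (ii) is a legitimate shortcut, with the understanding that the dual of $\varepsilon$ is the unit of \emph{some} adjunction $(\pi_{s*},\pi_s^*)$, hence differs from the fixed $\eta'$ only by an automorphism of $\pi_s^*\pi_{s*}$, which does not affect the isomorphism class of the complex $\Theta^!_s\coVerma{x}$.
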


\begin{proof}
If $x<sx$, then Prop.\ \ref{translationeffect} (i) implies that $\pi_s^*\pi_{s*}\Verma{sx}$ represents a class in $\Ext^1(\Verma{x}, \Verma{sx})$. Using Lemma \ref{adjinjective} we deduce that $\Theta^!_s\Verma{sx}\simeq \Verma{x}$. This gives (i). The proof of (ii) is similar. For (iii), we observe that if $sx<x$, then Prop.\ \ref{translationeffect} (iii) implies $\pi_s^*\pi_{s*}\simple{x} =0$. So $\Theta^!_s \simple{x}\simeq \simple{x}[1]$.
\end{proof}

\begin{thm}[{Bott's Theorem, \cite[Thm.\ 15]{Bott}}]\label{bottsthm}Let $x\in W$ and let $w_0$ be the longest element in $W$. Then
\[\Ext^i(\Verma{x}, \simple{w_0}) = \begin{cases}
\CC & \mbox{if $i=\ell(xw_0)$}, \\
0 & \mbox{otherwise}.
\end{cases}\]
\end{thm}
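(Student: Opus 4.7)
The plan is to induct on $\ell(w_0)-\ell(x)$, using the derived auto-equivalences $\Theta^*_s$ of Prop.\ \ref{dequivO} to step up the Bruhat order on the first argument while producing a cohomological shift on the second argument. The key observation is that $\simple{w_0}$ behaves uniformly under every $\Theta^*_s$ (since $sw_0<w_0$ for all simple reflections $s$), whereas a well-chosen $\Theta^*_s$ replaces $\Verma{x}$ by $\Verma{sx}$ without any shift; this is exactly the bookkeeping needed to track $\ell(xw_0)$.

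Inductive step: assume $x\neq w_0$, and pick a simple reflection $s$ with $x<sx$. By Prop.\ \ref{behaviourofequivs}(i) one has $\Theta^*_s\Verma{x}\simeq\Verma{sx}$, and by Prop.\ \ref{behaviourofequivs}(iii) one has $\Theta^*_s\simple{w_0}\simeq\simple{w_0}[-1]$. Since $\Theta^*_s$ is an auto-equivalence of $\Db(\cO_0)$,
\[
\Ext^i(\Verma{x},\simple{w_0}) = \Hom_{\Db(\cO_0)}(\Theta^*_s\Verma{x},\Theta^*_s\simple{w_0}[i]) = \Ext^{i-1}(\Verma{sx},\simple{w_0}).
\]
Since $\ell(sxw_0) = \ell(w_0)-\ell(sx) = \ell(xw_0)-1$, the induction hypothesis applied to $sx$ yields the desired answer for $x$: namely a one-dimensional $\Ext^i$ exactly at $i=\ell(xw_0)$ and zero elsewhere.

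Base case: when $x=w_0$, we have $\Verma{w_0}=M_{w_0w_0^{-1}\cdot 0}=M_0$, the Verma with highest weight $0$. Since $0$ is maximal in $W\cdot 0$ (with respect to the standard partial order on $\fh^*$), $M_0$ is projective in $\cO_0$; hence $\Ext^i(M_0,\simple{w_0})$ vanishes for $i>0$, while $\Hom(M_0,L_0)=\CC$ because $L_0=\simple{w_0}$ is the head of $M_0$. As $\ell(w_0w_0)=0$, this agrees with the claimed formula.

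The argument is essentially automatic once one has the two inputs from Prop.\ \ref{behaviourofequivs}, so there is no serious obstacle; the only thing to verify carefully is that the shift by $[-1]$ on $\simple{w_0}$ lines up with the decrement $\ell(xw_0)\mapsto\ell(xw_0)-1$, which is just the identity $\ell(xw_0)=\ell(w_0)-\ell(x)$.
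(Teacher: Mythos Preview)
Your proof is correct and follows essentially the same approach as the paper's: both use the equivalences $\Theta^*_s$ (via Prop.\ \ref{behaviourofequivs}(i) and (iii)) to reduce to the projectivity of $\Verma{w_0}$, picking up one cohomological shift per simple reflection. The only cosmetic difference is that the paper composes all the steps at once---writing $\Verma{x}\simeq\Theta^!_{s_m}\cdots\Theta^!_{s_1}\Verma{w_0}$ and then passing the $\Theta^!$'s to the other side as $\Theta^*$'s---whereas you phrase it as an explicit induction on $\ell(w_0)-\ell(x)$.
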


\begin{proof}
Let $s_1,\ldots, s_m$ be a sequence of simple reflections such that $s_1\cdots s_mx=w_0$ and $\ell(s_i\cdots s_mx)<\ell(s_{i-1}\cdots s_mx)$ for each $1< i < m+1$. That such a sequence exists follows from $w_0$ being the longest element in $W$. Note that $m=\ell(w_0)-\ell(x)=\ell(xw_0)$. So
\begin{align*}
\Ext^i(\Verma{x}, \simple{w_0}) &= \Ext^i(\Theta^!_{s_m}\cdots \Theta_{s_1}^!\Verma{w_0}, \simple{w_0}) \\
&= \Ext^i(\Verma{w_0}, \Theta_{s_1}^*\cdots \Theta_{s_m}^* \simple{w_0}) \\
&= \Ext^{i-\ell(ww_0)}(\Verma{w_0}, \simple{w_0}) \\
&=\begin{cases}
\CC& \mbox{if $i=\ell(xw_0)$}; \\
0 & \mbox{otherwise}.
\end{cases}
\end{align*}
The first equality is a given by Prop. \ref{behaviourofequivs} (i), the second equality is by adjointness and Prop.\ \ref{dequivO}, the third equality is by Prop.\ \ref{behaviourofequivs} (iii) and the final equality is due to the fact that the Verma module $\Verma{w_0}$ is projective in $\cO_0$ (see the first comment in the proof of Prop.\ \ref{behaviourofequivs}).
\end{proof}

\begin{remark}As noted by Bott (see the remarks at the end of \cite{Bott}), the result above gives a realization of the Weyl character formula in $K_0(\cO_0)$: $[\simple{w_0}]=\sum_{x\in W}(-1)^{\ell(xw_0)}[\Verma{x}]$.
\end{remark}

\section{Tilting modules and Soergel's character formula}\label{s:lastnongraded}
\subsection{}For each $w\in W$ fix a reduced word $w=s\cdots t$. Set
\[ \Theta_w^* = \Theta_s^*\cdots \Theta_t^* \quad \mbox{and} \quad \Theta_w^! = \Theta_s^!\cdots\Theta^!_t.\]
Up to natural isomorphism, the $\Theta_w^*$, $\Theta^!_w$ are independent of the choice of reduced word:

\begin{thm}[{\cite[Thm.\ 4.4]{Ro}}]\label{braidrelsO}
Let $w,w'\in W$. If $\ell(ww') = \ell(w) + \ell(w')$, then \[\Theta^*_w\Theta_{w'}^*\simeq \Theta^*_{ww'}.\]
\end{thm}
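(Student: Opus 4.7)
My plan is to reduce the statement to the braid relations for the simple reflections and then prove those. First, observe that when $\ell(ww') = \ell(w)+\ell(w')$, concatenating a reduced word for $w$ with one for $w'$ yields a reduced word for $ww'$. Using the associativity established in Prop.\ \ref{2catassoc}, this means $\Theta^*_w\Theta^*_{w'}$ is, up to the rebracketing of a multi-fold tensor/composition, the same complex of functors as $\Theta^*_{ww'}$ computed from that particular reduced word. Hence the theorem reduces to showing that $\Theta^*_w$ is independent of the choice of reduced expression for $w$, which by Matsumoto's theorem reduces to establishing the braid relations
\[ \underbrace{\Theta^*_s\Theta^*_t\Theta^*_s\cdots}_{m_{st}\text{ factors}} \simeq \underbrace{\Theta^*_t\Theta^*_s\Theta^*_t\cdots}_{m_{st}\text{ factors}} \]
for each pair of distinct simple reflections $s,t$ with $st$ of order $m_{st}$.

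The case $m_{st}=2$ is easy. When $s$ and $t$ commute, translations onto and off of the orthogonal walls corresponding to $s$ and $t$ commute up to canonical natural isomorphism $\pi_t^*\pi_{t*}\pi_s^*\pi_{s*}\simeq \pi_s^*\pi_{s*}\pi_t^*\pi_{t*}$ (both sides are implemented by tensoring with finite-dimensional modules and projecting onto blocks, and the two tensor factors can be reordered). This extends directly to a natural isomorphism between the two-term complexes $\Theta^*_s\Theta^*_t$ and $\Theta^*_t\Theta^*_s$ by naturality of the counit.

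The case $m_{st}\geq 3$ is the main obstacle; this is essentially the content of \cite[Thm.\ 4.4]{Ro}. The strategy I would follow is to construct an explicit natural transformation between the two composites using units and counits and then verify it is an isomorphism via d\'evissage. Concretely: both composites, evaluated on $\Verma{e}=\Verma{w_0}$, yield (by iterated application of Prop.\ \ref{behaviourofequivs}(i)) the Verma module $\Verma{w_{s,t}w_0}$ associated to the longest element $w_{s,t}\in\langle s,t\rangle$; more generally, on any Verma module $\Verma{x}$ the combinatorics on both sides agree. Since both composites are derived self-equivalences (Prop.\ \ref{dequivO} together with Thm.\ \ref{mainthm}), and since every object of $\Db(\cO_0)$ is filtered by shifts of Verma modules, once a candidate natural transformation is in hand, checking it is an isomorphism on Verma modules suffices by Lemma \ref{devissagetriang}.

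The genuinely delicate part — and the reason this is cited rather than reproved from scratch — is the construction of that candidate natural transformation in a way that makes the isomorphism check tractable. The argument ultimately reduces to the rank-two parabolic generated by $s$ and $t$, where one performs a direct computation with the two Zamolodchikov-style complexes of functors, identifying all summands on each side and matching them via the appropriate units/counits of the adjunctions $(\pi_s^*,\pi_{s*})$, $(\pi_{s*},\pi_s^*)$ and their $t$-analogues. With that rank-two verification in place, the reductions above deliver the full theorem.
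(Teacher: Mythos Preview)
The paper does not prove this theorem; it is stated with attribution to Rouquier \cite[Thm.\ 4.4]{Ro} and used as a black box. So there is no paper proof to compare against beyond the citation itself.

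Your outline correctly identifies the standard reduction: by Matsumoto's theorem the statement is equivalent to the braid relations for simple reflections, and those can be checked in the rank-two parabolic generated by $s$ and $t$. You are also right that the $m_{st}=2$ case is routine and that the $m_{st}\geq 3$ cases are the substantive content of Rouquier's result.

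There is, however, a genuine gap in your d\'evissage sketch for $m_{st}\geq 3$. Knowing that both composites send $\Verma{e}$ to the same Verma module, or that ``the combinatorics on both sides agree'' on Verma modules, does not produce a natural transformation between the two functors, nor does it show that a \emph{given} natural transformation is an isomorphism on Vermas. Lemma~\ref{devissagetriang} requires a specific morphism $\varepsilon\colon f\to g$ and a verification that $\varepsilon_L$ is an isomorphism for each generator $L$; agreement of $[fL]$ and $[gL]$ in $K_0$ is not sufficient. Moreover, $\Theta_s^*\Verma{x}$ is a Verma module only when $x<sx$ (Prop.~\ref{behaviourofequivs}(i)); for $sx<x$ it is not, so even identifying the two sides objectwise on all Vermas is nontrivial. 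You acknowledge that constructing the candidate natural transformation is ``the genuinely delicate part'', but this is not a technicality to be filled in---it is essentially the entire theorem. Rouquier's actual argument operates directly in the homotopy category of complexes of functors (equivalently, bimodules) and proceeds by an explicit analysis of those complexes rather than by d\'evissage on objects of $\cO_0$; see \cite[Prop.\ 3.2]{Ro}, which is precisely what the paper invokes for the graded analogue (Thm.~\ref{gradedbraid}).
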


\begin{prop}\label{lem1}Let $w\in W$. 
\begin{enumerate}
\item
$\Theta_w^* \Verma{e} \simeq \Verma{w}$.
\item
$\Theta^!_w \Verma{e} \simeq \coVerma{w}$.
\end{enumerate}
\end{prop}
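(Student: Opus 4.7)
The plan is to induct on $\ell(w)$ using a fixed reduced expression. Choose a reduced word $w = s_1 s_2 \cdots s_k$, so that by definition $\Theta^*_w = \Theta^*_{s_1} \Theta^*_{s_2} \cdots \Theta^*_{s_k}$. I will show by descending induction on $i \in \{1,\ldots,k+1\}$ that
\[ \Theta^*_{s_i} \Theta^*_{s_{i+1}} \cdots \Theta^*_{s_k} \Verma{e} \simeq \Verma{s_i s_{i+1} \cdots s_k}, \]
where the case $i = k+1$ is read as the empty composition applied to $\Verma{e}$. The case $i = 1$ is the conclusion of (i). The first non-trivial case $i = k$ amounts to $\Theta^*_{s_k} \Verma{e} \simeq \Verma{s_k}$, which is immediate from Prop.\ \ref{behaviourofequivs}(i) since $e < s_k$ in the Bruhat order.

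For the inductive step, the key combinatorial input is that every right suffix $s_i s_{i+1} \cdots s_k$ of a reduced word is itself reduced, so $\ell(s_i \cdots s_k) = k - i + 1 > k - i = \ell(s_{i+1} \cdots s_k)$; equivalently, $s_{i+1} \cdots s_k < s_i (s_{i+1} \cdots s_k)$ in the Bruhat order. This is precisely the hypothesis Prop.\ \ref{behaviourofequivs}(i) requires to conclude $\Theta^*_{s_i} \Verma{s_{i+1} \cdots s_k} \simeq \Verma{s_i s_{i+1} \cdots s_k}$; feeding this the inductive hypothesis closes the induction and yields (i).

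For (ii) the argument is formally identical, with $\Theta^!$ and Prop.\ \ref{behaviourofequivs}(ii) replacing their starred counterparts and $\coVerma{-}$ replacing $\Verma{-}$ throughout. The only extra input is that the induction should really begin from $\coVerma{e}$; but $\Verma{e} = \Verma{w_0 \cdot 0}$ has antidominant highest weight $w_0 \cdot 0$, hence is simple, hence self-dual, so $\coVerma{e} \simeq \Verma{e}$ and the induction proceeds identically. Note that Thm.\ \ref{braidrelsO} is not invoked directly in the argument; it is used only in the background, to guarantee that $\Theta^*_w$ and $\Theta^!_w$ are well defined up to isomorphism independently of the reduced expression chosen. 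There is no serious obstacle here — the proof is essentially an unwinding of Prop.\ \ref{behaviourofequivs} against the combinatorics of reduced words.
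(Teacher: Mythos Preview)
Your argument is correct and is essentially the same as the paper's: both proofs unwind the definition $\Theta_w^* = \Theta_{s_1}^*\cdots\Theta_{s_k}^*$ along a reduced word and apply Prop.~\ref{behaviourofequivs} at each step, using that every suffix of a reduced word is reduced; the paper likewise notes $\Verma{e}=\coVerma{e}$ for part (ii). Your remark that Thm.~\ref{braidrelsO} is only needed for well-definedness (and not for the computation itself) is accurate---the paper's citation of it in the proof is really just restating the definition of $\Theta_w^*$.
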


\begin{proof}
Let $w=s\cdots t$ be a reduced word. Then $\Theta^*_w\simeq\Theta^*_s\cdots \Theta^*_t$ by Thm.\ \ref{braidrelsO}. Hence, by Prop.\ \ref{behaviourofequivs} (i),
\[ \Theta^*_w \Verma{e} \simeq \Theta^*_s \cdots \Theta^*_t \Verma{e} \simeq \Verma{s\cdots t}=\Verma{w}. \]
This proves (i). The proof of (ii) is analogous (note that $\Verma{e}=\coVerma{e}$).
\end{proof}

\begin{lemma}\label{longelementlemma}Let $x\in W$ and let $w_0$ be the longest element in $W$.
\begin{enumerate}
\item $\Theta_{w_0}^*\coVerma{x}\simeq \Verma{w_0x}$.
\item $\Theta^!_{w_0}\Verma{x}\simeq \coVerma{w_0x}$.
\end{enumerate}
\end{lemma}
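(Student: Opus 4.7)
The plan is to reduce to Prop.\ \ref{lem1} by using the braid relations to factor $\Theta_{w_0}$. Since $\ell(w_0) = \ell(w_0x) + \ell(x) = \ell(w_0x) + \ell(x^{-1})$, Thm.\ \ref{braidrelsO} gives a natural isomorphism $\Theta^*_{w_0} \simeq \Theta^*_{w_0x}\,\Theta^*_{x^{-1}}$. For part (i), it is therefore enough to establish $\Theta^*_{x^{-1}}\coVerma{x} \simeq \Verma{e}$: combined with Prop.\ \ref{lem1} (i), this immediately gives
\[ \Theta^*_{w_0}\coVerma{x} \;\simeq\; \Theta^*_{w_0x}\Theta^*_{x^{-1}}\coVerma{x} \;\simeq\; \Theta^*_{w_0x}\Verma{e} \;\simeq\; \Verma{w_0x}. \]
Here I implicitly use that $\coVerma{e} = \Verma{e}$, which holds because the antidominant Verma module is simple and hence self-dual.

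To compute $\Theta^*_{x^{-1}}\coVerma{x}$ I would peel off simple reflections one at a time. First observe that since $\Theta^*_s$ and $\Theta^!_s$ are mutually inverse (Prop.\ \ref{dequivO}), Prop.\ \ref{behaviourofequivs} (ii) inverts to the statement: if $sy < y$, then $\Theta^*_s \coVerma{y} \simeq \coVerma{sy}$. Now pick a reduced word $x = s_1s_2\cdots s_k$; then $x^{-1} = s_k\cdots s_1$ is reduced, so by definition $\Theta^*_{x^{-1}} = \Theta^*_{s_k}\cdots\Theta^*_{s_1}$. Applied right-to-left to $\coVerma{x}$, the $i$-th step sends $\coVerma{s_is_{i+1}\cdots s_k}$ to $\coVerma{s_{i+1}\cdots s_k}$, since $s_i\cdot(s_is_{i+1}\cdots s_k) = s_{i+1}\cdots s_k$ has strictly smaller length. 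After $k$ such steps one arrives at $\coVerma{e} = \Verma{e}$, as needed.

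Part (ii) can be handled either by the parallel argument, using the inversion $\Theta^!_s\Verma{y} \simeq \Verma{sy}$ (when $sy<y$) of Prop.\ \ref{behaviourofequivs} (i) together with the analogous braid relation $\Theta^!_{w_0}\simeq \Theta^!_{w_0x}\,\Theta^!_{x^{-1}}$ (which follows from Thm.\ \ref{braidrelsO} via Prop.\ \ref{dequivO}), or more slickly by applying the quasi-inverse $\Theta^!_{w_0}$ to the isomorphism of part (i) with $x$ replaced by $w_0x$, which yields $\coVerma{w_0x} \simeq \Theta^!_{w_0}\Verma{x}$. The main obstacle is not computational but bookkeeping: one must carefully track that $\Theta^*_s$ moves Verma modules up the Bruhat order but dual Verma modules down, while $\Theta^!_s$ does the opposite. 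Once the correct factorization $w_0 = (w_0x)\cdot x^{-1}$ is identified and the inverted forms of Prop.\ \ref{behaviourofequivs} are in hand, the remainder of the argument is mechanical.
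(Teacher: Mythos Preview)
Your argument is correct and follows the same route as the paper: factor $\Theta^*_{w_0}\simeq\Theta^*_{w_0x}\Theta^*_{x^{-1}}$ via Thm.\ \ref{braidrelsO}, reduce to $\Verma{e}$, and then invoke Prop.\ \ref{lem1}. The only cosmetic difference is that the paper packages your step $\Theta^*_{x^{-1}}\coVerma{x}\simeq\Verma{e}$ as the cancellation $\Theta^*_{x^{-1}}\Theta^!_x\simeq\id$ applied to $\Verma{e}=\coVerma{e}$ (using Prop.\ \ref{lem1} (ii) and Prop.\ \ref{dequivO}), whereas you unwind it reflection by reflection via the inverted form of Prop.\ \ref{behaviourofequivs} (ii); for part (ii) your ``slick'' option is exactly what the paper does.
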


\begin{proof}We have 
\[
\Theta_{w_0}^*\coVerma x \simeq \Theta_{w_0}^*\Theta^!_x \Verma e
\simeq \Theta^*_{w_0x}\Theta^*_{x^{-1}}\Theta^!_x \Verma e \simeq \Theta_{w_0x}^*\Verma e \simeq \Verma{w_0x}.
\]
The first isomorphism is Prop.\ \ref{lem1} (ii), the second isomorphism follows from Thm.\ \ref{braidrelsO}, the third isomorphism follows from Prop.\ \ref{dequivO} and the last isomorphism is Prop.\ \ref{lem1} (i). This proves (i). Using Prop.\ \ref{dequivO} we deduce that $(\Theta^*_{w_0})^{-1}=\Theta^!_{w_0}$. Thus, (ii) follows from (i).
\end{proof}

\begin{prop}\label{triangdeltafiltr}Let $X\in\cO_0$, then, as an object of $\Db(\cO_0)$, $X$ is filtered by objects of the form $\Verma{x}[i]$, $i\geq 0$, $x\in W$.
\end{prop}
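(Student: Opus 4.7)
The plan is to use standard structural properties of the principal block: $\cO_0$ has enough projectives, every projective admits a Verma flag, and $\cO_0$ has finite global dimension (classical facts, see e.g.\ \cite{Ja}). Together, they let me represent any $X \in \cO_0$ by a bounded projective complex and then filter it column-by-column.

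Concretely, I pick a finite projective resolution $0 \to P^{-n} \to \cdots \to P^{-1} \to P^0 \to X \to 0$ with each $P^i$ having a Verma flag, so that $X \simeq P^\bullet$ in $\Db(\cO_0)$, where $P^i$ sits in cohomological degree $i \leq 0$. The brutal truncations $P^\bullet_{\geq k} = [\,0 \to P^k \to P^{k+1} \to \cdots \to P^0\,]$ give a finite increasing filtration $0 = P^\bullet_{\geq 1} \subseteq P^\bullet_{\geq 0} \subseteq \cdots \subseteq P^\bullet_{\geq -n} = P^\bullet$, and the short exact sequences of complexes $0 \to P^\bullet_{\geq k+1} \to P^\bullet_{\geq k} \to P^k[-k] \to 0$ become distinguished triangles $P^\bullet_{\geq k+1} \to P^\bullet_{\geq k} \to P^k[-k] \leadsto$ in $\Db(\cO_0)$, in which every shift $-k$ is non-negative.

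Next, I refine each successive quotient $P^k[-k]$. Because $P^k$ admits a Verma flag in the abelian category $\cO_0$, the corresponding short exact sequences produce distinguished triangles in $\Db(\cO_0)$ exhibiting $P^k$ as filtered by Verma modules $\Verma{x}$. Applying the shift $[-k]$ shows that $P^k[-k]$ is filtered by $\Verma{x}[-k]$ with $-k \geq 0$. Concatenating these refinements with the truncation filtration of $P^\bullet$ yields a filtration of $X$ by objects of the form $\Verma{x}[i]$, $i \geq 0$, $x \in W$, as required.

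The argument is essentially bookkeeping: the substantive ingredients (existence of Verma flags on projectives and finiteness of the global dimension of $\cO_0$) are classical, so no real obstacle arises. The only minor subtlety is to arrange cohomological signs so that only non-negative shifts appear, which is what forces the use of a projective (rather than an injective) resolution.
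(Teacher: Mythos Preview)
Your proof is correct, but it takes a genuinely different route from the paper's. The paper reduces at once to the simple objects (using finite length) and then argues by induction on $\ell(x)$: the base case is $\simple{e}=\Verma{e}$, and for the inductive step the short exact sequence $0\to N_x\to \Verma{x}\to \simple{x}\to 0$ yields a triangle $\Verma{x}\to \simple{x}\to N_x[1]\leadsto$, where the composition factors of $N_x$ are $\simple{x'}$ with $x'<x$, so $N_x[1]\in\cO_{\Delta}^{*\infty}$ by induction.

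The trade-off: your argument packages the content into two classical black boxes (finite global dimension of $\cO_0$ and Verma flags on projectives) and is then pure bookkeeping with brutal truncations; it would transfer verbatim to any quasi-hereditary category of finite global dimension, and it gives explicit control on the shifts (bounded by the projective dimension of $X$). The paper's argument is more self-contained within what has already been set up: it uses only finite length and the elementary fact about composition factors of the radical of a Verma module, and never invokes projectives or global dimension at all.
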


\begin{proof}
Let $\cO_{\Delta}$ be the subcategory of $\Db(\cO_0)$ consisting of objects $\Verma{x}[i]$, $i\in\ZZ_{\geq 0}$, $x\in W$. 
We will use the notation introduced in \S\ref{s:trianggen}.
If $M\in\cO_{\Delta}^{*\infty}$, then $M[i]\in\cO_{\Delta}^{*\infty}$ for all $i\in\ZZ_{\geq 0}$.
It suffices to show that $\cO_0 \subset \cO_{\Delta}^{*\infty}$. Since every object in $\cO_0$ has finite length, this reduces to showing that each $\simple{x}$, $x\in W$, is in $\cO_{\Delta}^{*\infty}$. Proceed by induction on the length of $x$. If $\ell(x)=0$, then $x=e$ and $\simple{x}=\simple{e}=\Verma{e}$ which is clearly in $\cO_{\Delta}^{*\infty}$. Now let $x\in W$ and assume that if $\ell(x')<\ell(x)$, then $\simple{x'}\in\cO_{\Delta}^{*\infty}$. Let $N_x$ be the kernel of the map $\Verma{x}\twoheadrightarrow \simple{x}$. Then the exact sequence $0 \to N_x \to \Verma{x} \to \simple{x} \to 0$
gives a distinguished triangle
$\Verma{x} \to \simple{x} \to N_x[1] \leadsto$
in $\Db(\cO_0)$. By the induction hypothesis $N_x[1]\in \cO_{\Delta}^{*\infty}$. Consequently, $\simple{x}\in \cO_{\Delta}^{*\infty}$.
\end{proof}

\subsection{}
For each $x\in W$ there exists a unique (up to isomorphism) indecomposable object, denoted $D_x$, characterized by the following properties:
\begin{enumerate}
\item $D_x$ admits a filtration
$0=V_0 \subset V_1 \subset\cdots \subset V_k=D_x$
such that each $V_i/V_{i-1}$ is isomorphic to a dual Verma module and $V_k/V_{k-1}\simeq \coVerma{x}$.
\item $\Ext^i(D_x, \coVerma{y}) = 0$ for all $i\neq 0$ and $y\in W$.
\end{enumerate}
The $D_x$ are the so-called indecomposable tilting modules. They are self-dual, i.e., $D_x^{\vee}\simeq D_x$. See \cite[\S5]{So98} for a streamlined treatment of tilting modules.

\subsection{}It is well known (see \cite[\S4]{BGG}) that category $\cO$ has enough projectives. 
For $\lambda\in\fh^*$ let $P_{\lambda}$ denote the indecomposable projective cover of $\simple{\lambda}$. Further, for $x\in W$ let $P_x$ denote the indecomposable projective cover of $L_x$ and set $I_x = P_x^{\vee}$.

The following result is the category $\cO$ analogue of \cite[Thm.\ 6.10]{BeGi} ($D$-modules) and \cite[\S2.3]{BBM} (perverse sheaves). The proof presented here is formally the same as that of \cite[Prop.\ 2.3]{BBM}, also see \cite[Thm.\ 8]{StM}. Actually, the Radon transforms of \cite{BBM} are Koszul dual (in the sense of \cite{BGS}) to the $\Theta^*_w$.
\begin{thm}\label{switchtilting}Let $x\in W$ and let $w_0$ be the longest element in $W$. Then
\begin{enumerate}
\item
$\Theta^*_{w_0}D_x \simeq P_{w_0x}$;
\item
$\Theta^*_{w_0}I_x \simeq D_{w_0x}$.
\end{enumerate}
\end{thm}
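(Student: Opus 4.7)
The plan is to prove both parts in parallel, driven by the observation that $\Theta^*_{w_0}$ carries any module $M\in\cO_0$ with a dual Verma flag to a module in $\cO_0$ with a Verma flag, replacing each subquotient $\coVerma{y}$ by $\Verma{w_0y}$. The distinction between (i) and (ii) then comes from whether the input is tilting (forcing the output to be projective) or injective (forcing the output to be tilting). Both $D_x$ and $I_x=P_x^\vee$ have dual Verma flags, the latter by dualizing a Verma flag of $P_x$, so this common starting point applies.

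For the first step, given a dual Verma filtration $0=V_0\subset V_1\subset\cdots\subset V_k=M$ with $V_i/V_{i-1}\simeq\coVerma{y_i}$, apply $\Theta^*_{w_0}$ to each short exact sequence to obtain a distinguished triangle $\Theta^*_{w_0}V_{i-1}\to\Theta^*_{w_0}V_i\to\Verma{w_0y_i}\leadsto$ in $\Db(\cO_0)$, using Lemma \ref{longelementlemma}(i). Induction on $i$ combined with the fact that $\cO_0$ is closed under extensions in $\Db(\cO_0)$ (which follows from the long exact cohomology sequence: with $H^j=0$ for $j\neq 0$ on the outer terms, the same must hold for the middle term) forces each $\Theta^*_{w_0}V_i$ into $\cO_0$ and upgrades the triangle to an honest short exact sequence extending the Verma flag. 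The top quotient of the induced Verma flag of $\Theta^*_{w_0}M$ is $\Verma{w_0y_k}$, the image of the top quotient $\coVerma{y_k}$ of $M$.

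For (i), projectivity of $\Theta^*_{w_0}D_x$ now follows from an Ext-vanishing computation. By Prop \ref{triangdeltafiltr} applied to $\simple{z}\in\cO_0$, and by Lemma \ref{longelementlemma}(ii), $\Theta^!_{w_0}\simple{z}$ is filtered in $\Db(\cO_0)$ by shifts $\coVerma{w'}[i]$ with $i\geq 0$. The adjunction $(\Theta^*_{w_0},\Theta^!_{w_0})$ combined with the defining Ext-vanishing of the tilting module $D_x$ then yields
\[
\Ext^j(\Theta^*_{w_0}D_x,\simple{z})=\Ext^j(D_x,\Theta^!_{w_0}\simple{z})=0\qquad\text{for all }j>0,\ z\in W,
\]
since each contribution $\Ext^{j+i}(D_x,\coVerma{w'})$ vanishes when $j+i>0$. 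Because $\Theta^*_{w_0}$ is an equivalence and $D_x$ is indecomposable, $\Theta^*_{w_0}D_x$ is indecomposable in $\cO_0$; combined with projectivity and the surjection $\Theta^*_{w_0}D_x\twoheadrightarrow\Verma{w_0x}\twoheadrightarrow\simple{w_0x}$ coming from the top of the Verma flag, this identifies $\Theta^*_{w_0}D_x\simeq P_{w_0x}$.

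For (ii), $\Theta^*_{w_0}I_x$ already carries a Verma flag by Step 1; the extra point is that it also carries a dual Verma flag, which amounts to $\Ext^1(\Verma{z},\Theta^*_{w_0}I_x)=0$ for all $z$. Through the equivalence and Lemma \ref{longelementlemma}(ii) this reduces to $\Ext^1(\coVerma{w_0z},I_x)=0$, which holds because $I_x$ is injective. Thus $\Theta^*_{w_0}I_x$ is an indecomposable tilting module. Since the socle $\simple{x}$ of $I_x$ forces the socle-containing dual Verma $\coVerma{x}$ to be the bottom submodule of any dual Verma flag of $I_x$, Step 1 produces $\Verma{w_0x}\hookrightarrow\Theta^*_{w_0}I_x$ as the bottom submodule of the induced Verma flag; self-duality of tiltings then characterizes the result as $D_{w_0x}$. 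The main technical point throughout is the heart-closure step, which is what allows the ``Verma flag from dual Verma flag'' principle to propagate across all the simple reflections in a reduced expression for $w_0$; everything else is a formal consequence of the equivalence (Prop \ref{dequivO}) and the explicit action of $\Theta^*_{w_0}$ on dual Vermas (Lemma \ref{longelementlemma}).
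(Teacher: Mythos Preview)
Your proof is correct and follows essentially the same approach as the paper: both use Lemma~\ref{longelementlemma} together with the equivalence $\Theta^*_{w_0}$ to transport the dual Verma flag to a Verma flag, then establish projectivity (resp.\ tilting) via an $\Ext$-vanishing computation using Prop.~\ref{triangdeltafiltr}. The only minor differences are cosmetic: you track the induced Verma flag explicitly (and use its top/bottom pieces for the identification, where the paper invokes self-duality of $D_x$ more tersely), you test $\Ext$-vanishing directly against simples rather than against Vermas first, and you spell out (ii) rather than leaving it as ``similar''.
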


\begin{proof}
We will only prove (i), the proof of (ii) is similar.
Since $D_x$ has a dual Verma filtration, Prop.\ \ref{equivspreservevermas} (ii) implies that $\Theta^*_{w_0}D_x$ lies in $\cO_0$. Let $y\in W$ and let $i>0$, then
\[\Ext^i_{\cO_0}(\Theta_{w_0}^* D_x, \Verma{y}) = \Ext_{\cO_0}^i(D_x, \Theta_{w_0}^! \Verma{y} )
= \Ext_{\cO_0}^i(D_x, \coVerma{w_0y})
= 0.
\]
The first equality is given by Prop.\ \ref{dequivO} and Thm.\ \ref{braidrelsO}. The second equality is Lemma \ref{longelementlemma} (ii) and the last equality is by the definition of $D_x$.
Combining this with Lemma \ref{triangdeltafiltr} we deduce that if $i>0$, then $\Ext^i_{\cO_0}(\Theta_{w_0}^* D_x, X)=0$ for all $X\in\cO_0$. Thus $\Theta_{w_0}^*D_x$ is projective. Since $D_x$ is indecomposable and $\Theta_{w_0}^*$ is an equivalence, we deduce that $\Theta_{w_0}^* D_x$ is indecomposable. It remains to show that $\Theta_{w_0}^* D_x$ surjects onto $\simple{w_0x}$. As $D_x$ is self-dual,
Lemma \ref{longelementlemma} (i) implies that $\Theta_{w_0}^*D_x$ surjects onto $\Verma{w_0x}$. Thus, $\Theta_{w_0}^*D_x$ surjects onto $\simple{w_0x}$. 
\end{proof}

\begin{cor}[{\cite[Thm.\ 6.7]{So98}}]\label{tiltingcharformula}Let $x,y\in W$ and let $w_0$ be the longest element in $W$. Then, at the level of the Grothendieck group $K_0(\cO_0)$:
\[ [ D_x : \Verma{y} ] = [P_{w_0x} : \Verma{w_0y}] = [\Verma{w_0y} : \simple{w_0x}]. \]
\end{cor}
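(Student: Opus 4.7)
The plan is to derive both equalities by passing to the Grothendieck group $K_0(\cO_0)$, using the derived equivalence of Theorem \ref{switchtilting}(i) for the first equality and invoking BGG reciprocity for the second.

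First I would record two standard facts about $K_0(\cO_0)$: the classes $\{[\Verma y] : y \in W\}$ form a $\ZZ$-basis, and $[\Verma y] = [\coVerma y]$ since $\Verma y$ and $\coVerma y$ have identical composition factors. By Prop.\ \ref{dequivO} combined with Thm.\ \ref{braidrelsO}, $\Theta^*_{w_0}$ is a self-equivalence of $\Db(\cO_0)$ and therefore induces a $\ZZ$-linear automorphism of $K_0(\cO_0)$. Lemma \ref{longelementlemma}(i), together with $[\Verma y] = [\coVerma y]$, then shows that this induced automorphism sends $[\Verma y]$ to $[\Verma{w_0 y}]$ for every $y \in W$.

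Next, because $D_x$ admits a dual Verma filtration, the coefficient of $[\coVerma y]$ in the expansion of $[D_x]$ in the basis of dual Verma classes equals the Grothendieck-group multiplicity $[D_x : \Verma y]$ (again using $[\coVerma y] = [\Verma y]$). Writing
\[
[D_x] = \sum_{y \in W} [D_x : \Verma y]\, [\Verma y],
\]
applying $\Theta^*_{w_0}$, and invoking Thm.\ \ref{switchtilting}(i), one obtains
\[
[P_{w_0 x}] = \sum_{y \in W} [D_x : \Verma y]\, [\Verma{w_0 y}].
\]
Reading off coefficients in the Verma basis yields $[P_{w_0 x} : \Verma{w_0 y}] = [D_x : \Verma y]$, which is the first equality. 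The second equality is BGG reciprocity, $(P_z : \Verma{\mu}) = [\Verma{\mu} : \simple z]$, applied to $z = w_0 x$ and $\mu = w_0 y$; this is classical for category $\cO$ and can simply be cited from the standard references.

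I do not anticipate any real obstacle: the substantive content — the derived equivalence, its braid relations, and the identification $\Theta^*_{w_0} D_x \simeq P_{w_0 x}$ — has already been established, and what remains is a routine comparison of classes in $K_0(\cO_0)$ together with a citation of BGG reciprocity.
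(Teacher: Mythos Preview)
Your proposal is correct and follows essentially the same route as the paper: apply the derived equivalence $\Theta^*_{w_0}$ at the level of $K_0(\cO_0)$, use Thm.~\ref{switchtilting}(i) and Lemma~\ref{longelementlemma} to identify where tilting and Verma classes go, and then invoke BGG reciprocity. The only cosmetic difference is that you compute $[\Theta^*_{w_0}\Verma{y}]=[\Verma{w_0y}]$ via Lemma~\ref{longelementlemma}(i) together with $[\Verma{y}]=[\coVerma{y}]$, whereas the paper uses Lemma~\ref{longelementlemma}(ii) together with the observation that $[\Theta^*_s X]=[\Theta^!_s X]$ in $K_0$; these are equivalent bookkeeping choices.
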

\begin{proof}Working in $K_0(\cO_0)$, we have
\[
[D_x : \Verma{y} ] = [\Theta^*_{w_0} D_x : \Theta^*_{w_0}\Verma{y}] 
= [P_{w_0x} : \coVerma{w_0y}] 
= [\Verma{w_0y} : \simple{w_0x}]. \]
The first equality is a consequence of Prop.\ \ref{dequivO}. The second equality is obtained from Thm.\ \ref{ringelselfduality} (i) and by combining Lemma \ref{longelementlemma} (ii) with the fact that at the level of $K_0(\cO_0)$, $[\Theta^*_sX]= [\Theta^!_s X]$ for all $X\in\cO_0$ and each simple reflection $s\in W$. The last equality is BGG reciprocity (see \cite[\S6 Prop.\ 2]{BGG}).
\end{proof}

\begin{cor}[\cite{So98}]\label{ringelselfduality}$\bigoplus_{x\in W} \End(P_x) \simeq \bigoplus_{x\in W}\End(D_x)$.
\end{cor}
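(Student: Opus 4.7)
The plan is to interpret $\bigoplus_x \End(P_x)$ and $\bigoplus_x \End(D_x)$ as the endomorphism rings $\End_{\cO_0}(P)$ and $\End_{\cO_0}(D)$ of the multiplicity-free direct sums $P = \bigoplus_{x\in W} P_x$ and $D = \bigoplus_{x\in W} D_x$, and then to transport one to the other via the derived auto-equivalence $\Theta^*_{w_0}$. The key input is Thm.\ \ref{switchtilting}(i), which gives $\Theta^*_{w_0}D_x \simeq P_{w_0 x}$ for every $x\in W$; since $x\mapsto w_0 x$ permutes $W$, summing over $x$ yields $\Theta^*_{w_0}D \simeq P$ in $\Db(\cO_0)$.

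Next, I would use the fact that $\Theta^*_{w_0}$ is an equivalence of $\Db(\cO_0)$ (Prop.\ \ref{dequivO} combined with Thm.\ \ref{braidrelsO}) to produce a ring isomorphism
\[ \End_{\Db(\cO_0)}(D) \xrightarrow{\sim} \End_{\Db(\cO_0)}(\Theta^*_{w_0}D) \simeq \End_{\Db(\cO_0)}(P). \]
It then remains to identify each of these derived endomorphism rings with the corresponding endomorphism ring computed in the abelian category $\cO_0$. For $P$ this is automatic from projectivity: $\Ext^i_{\cO_0}(P, P)=0$ for $i>0$. For $D$ one needs the vanishing $\Ext^i_{\cO_0}(D_x, D_y)=0$ for $i>0$ and all $x,y\in W$. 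This is the main technical step, but it is standard: since $D_y$ admits a filtration by dual Verma modules and, by definition of tilting, $\Ext^i_{\cO_0}(D_x, \coVerma{z})=0$ for all $i>0$ and $z\in W$, the long exact sequences for $\Ext^{\bullet}_{\cO_0}(D_x, -)$ applied to the filtration propagate the vanishing from the subquotients to $D_y$ by induction on the length of the filtration.

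Combining these three ingredients yields the isomorphism of rings $\End_{\cO_0}(D) \simeq \End_{\cO_0}(P)$, which is the content of the corollary. The only non-formal step is the higher Ext vanishing among tiltings, but this follows cleanly from the defining properties recalled before the statement of Thm.\ \ref{switchtilting}; beyond that, the argument is a direct application of the derived equivalence already constructed.
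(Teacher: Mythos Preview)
Your approach is the same as the paper's: use the derived auto-equivalence $\Theta^*_{w_0}$ (the paper uses its inverse $\Theta^!_{w_0}$) together with Thm.~\ref{switchtilting} to transport $\bigoplus_x D_x$ to $\bigoplus_x P_x$, and read off the isomorphism of endomorphism rings. One simplification: your detour through higher Ext vanishing among tiltings is unnecessary, since both $D$ and $P$ are honest objects of $\cO_0$ and the embedding $\cO_0\hookrightarrow\Db(\cO_0)$ is fully faithful, so $\End_{\Db(\cO_0)}(D)=\End_{\cO_0}(D)$ and $\End_{\Db(\cO_0)}(P)=\End_{\cO_0}(P)$ hold automatically without any Ext computation.
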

\begin{proof}Let $w_0$ be the longest element in $W$. Then $w_0^{-1}=w_0$. Thus, Prop.\ \ref{dequivO} gives that $(\Theta^*_{w_0})^{-1}=\Theta^!_{w_0}$. So, by Thm.\ \ref{ringelselfduality} (i), we have
\[ \bigoplus_{x\in W} \End(P_x)\simeq\bigoplus_{x\in W} \End(\Theta_{w_0}^!P_x)\simeq\bigoplus_{w\in W}\End(D_x). \qedhere \]
\end{proof}

\section{Complements on graded category $\cO$}\label{s:gO}
We start by reviewing some ideas of Soergel and Stroppel.
\subsection{}In the following graded will always mean $\ZZ$-graded. Modules over an algebra will mean right modules.
Let $A$ be a finite dimensional graded $\CC$-algebra. Let $A\mof$ be the category of all finite dimensional $A$-modules and let $A\gmof$ be the category of all graded finite dimensional $A$-modules. Denote by $\Hom_A(-,-)$ (resp.\ $\Hom_{A^{\mathrm{gr}}}(-,-)$) the morphisms in $A\mof$ (resp.\ $A\gmof$). Let $\nu\colon A\gmof \to A\mof$ be the functor of forgetting the grading. This is a faithful functor. Let $M=\bigoplus_{i\in\ZZ} M_i$ be a graded $A$-module with $M_i$ the component of degree $i$. For $n\in\ZZ$, define $M\langle n \rangle$ by $M\langle n \rangle_i = M_{i-n}$. Thus, $\nu M\langle n \rangle = \nu M$ and $\Hom_A(\nu M, \nu N) = \bigoplus_{n\in\ZZ}\Hom_{A^{\mathrm{gr}}}(M\langle n \rangle, N)$, $M,N\in A\gmof$.

Let $M\in A\mof$. Suppose there is a $\tilde M \in A\gmof$ such that $\nu \tilde M = M$, then we say that $\tilde M$ is a lift of $M$.\begin{lemma}Any two lifts of an indecomposable module $M\in A\mof$ are isomorphic up to grading shift.
\end{lemma}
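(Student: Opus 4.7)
The plan is to run the classical Krull--Schmidt style argument that lifts are unique up to shift, using the canonical decomposition of $\Hom_A(\nu-,\nu-)$ by grading shift recalled just above.

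Let $\tilde M$ and $\tilde M'$ be two graded lifts of $M$, so $\nu\tilde M = M = \nu\tilde M'$. First I would observe that both lifts are indecomposable in $A\gmof$: since $\nu$ is additive and faithful, any decomposition $\tilde M = X \oplus Y$ in $A\gmof$ would descend to a decomposition $M = \nu X \oplus \nu Y$ in $A\mof$, forcing $X=0$ or $Y=0$. Because $\tilde M$ and $\tilde M'$ are finite dimensional over $\CC$, their graded endomorphism rings $\End_{A^{\mathrm{gr}}}(\tilde M)$ and $\End_{A^{\mathrm{gr}}}(\tilde M')$ are therefore finite dimensional local $\CC$-algebras (Fitting).

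Next I would use the decompositions
\[
\Hom_A(\nu\tilde M, \nu\tilde M') = \bigoplus_{n\in\ZZ}\Hom_{A^{\mathrm{gr}}}(\tilde M\langle n\rangle,\tilde M'),
\qquad
\Hom_A(\nu\tilde M',\nu\tilde M) = \bigoplus_{n\in\ZZ}\Hom_{A^{\mathrm{gr}}}(\tilde M'\langle n\rangle,\tilde M),
\]
to write $\id_M = \sum_n f_n$ and $\id_M = \sum_n g_n$ where $f_n\colon \tilde M\langle n\rangle \to \tilde M'$ and $g_n\colon \tilde M'\langle n\rangle\to\tilde M$ are morphisms in $A\gmof$. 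Composing and expanding $\id_M = \id_M\circ\id_M$ inside $\End_A(M)$, the $n+m = 0$ graded components give
\[
\sum_{n\in\ZZ}\,g_{-n}\circ f_n \;=\; \id_{\tilde M} \quad\text{in }\End_{A^{\mathrm{gr}}}(\tilde M).
\]

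Now comes the key step: since $\End_{A^{\mathrm{gr}}}(\tilde M)$ is local, a sum that equals the unit must have at least one summand which is itself a unit. Hence for some $n\in\ZZ$ the composition $g_{-n}\circ f_n$ is an automorphism of $\tilde M$, so that $f_n\colon\tilde M\langle n\rangle\to\tilde M'$ is a split monomorphism in $A\gmof$. Its image is then a direct summand of the indecomposable module $\tilde M'$, and is nonzero since $f_n$ is injective and $\tilde M\neq 0$; thus $f_n$ is an isomorphism and $\tilde M'\simeq\tilde M\langle n\rangle$.

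The main subtlety I would expect is bookkeeping for grading shifts (making sure the counting is consistent between the two decompositions of $\id_M$), and cleanly justifying the locality of $\End_{A^{\mathrm{gr}}}(\tilde M)$ from the indecomposability of $M$; once those are in place, the argument is a one-line application of the local-ring property to a sum equal to the identity.
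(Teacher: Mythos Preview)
Your argument is correct and rests on the same core idea as the paper's proof: decompose the identity into homogeneous pieces and invoke locality to find an invertible one. The paper, however, runs this more directly using the \emph{ungraded} endomorphism ring: writing $\id_M=\sum_n \id_n$ with $\id_n\in\Hom_{A^{\mathrm{gr}}}(M'\langle n\rangle,M'')\subset\End_A(M)$, locality of $\End_A(M)$ (Fitting applied to the indecomposable $M$) forces some $\id_j$ to be a unit in $\End_A(M)$; since a bijective graded map is automatically a graded isomorphism, this gives $M'\langle j\rangle\simeq M''$ immediately. Your detour through the \emph{graded} endomorphism ring $\End_{A^{\mathrm{gr}}}(\tilde M)$---proving $\tilde M$ indecomposable, introducing a second decomposition $\id_M=\sum g_n$, and extracting the degree-zero identity $\sum_n g_{-n}f_n=\id_{\tilde M}$---is perfectly valid and is the standard Krull--Schmidt manoeuvre, but it is more work than needed here: the ungraded locality already does the job in one stroke.
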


\begin{proof}Let $M', M''$ be two lifts of $M$. Then the identity map $M\to M$ in
\[ \Hom_A(M, M) = \bigoplus_{n\in\ZZ}\Hom_{A^{\mathrm{gr}}}(M'\langle n \rangle, M'')\] decomposes into homogeneous components $\id = \sum_n \id_n$. By the Fitting Lemma, $\Hom_A(M, M)$ is a local ring. Thus, $\id_j$ must be invertible for some $j$.
\end{proof}

\begin{prop}\label{projprop}Let $P\in A\mof$ be an indecomposable projective.
Then any lift of $P$ is an indecomposable projective in $A\gmof$.
\end{prop}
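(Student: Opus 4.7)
The plan is to check indecomposability and projectivity of a lift $\tilde P$ separately.

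For indecomposability, suppose $\tilde P \simeq M \oplus N$ in $A\gmof$. Applying the additive functor $\nu$ gives $P \simeq \nu M \oplus \nu N$ in $A\mof$. Since $\nu X = 0$ if and only if $X = 0$, indecomposability of $P$ forces $M = 0$ or $N = 0$, so $\tilde P$ is indecomposable.

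For projectivity, the idea is that graded lifts can be extracted from ungraded ones via the degree decomposition of Hom spaces recalled at the start of this section. Let $\pi\colon M \twoheadrightarrow N$ be a surjection and $f\colon \tilde P \to N$ a morphism in $A\gmof$. Applying $\nu$ yields a surjection $\nu\pi$ and a morphism $\nu f\colon P \to \nu N$. Since $P$ is projective in $A\mof$, there exists $g\colon P \to \nu M$ with $\nu\pi \circ g = \nu f$. Decompose $g$ into its homogeneous components
\[ g = \sum_n g_n, \qquad g_n \in \Hom_{A^{\mathrm{gr}}}(\tilde P \langle n \rangle, M), \]
using the identification $\Hom_A(\nu\tilde P, \nu M) = \bigoplus_n \Hom_{A^{\mathrm{gr}}}(\tilde P \langle n \rangle, M)$. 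Since $\pi$ preserves degree, $\pi \circ g_n$ lies in $\Hom_{A^{\mathrm{gr}}}(\tilde P \langle n \rangle, N)$, i.e., is the degree-$n$ component of $\nu\pi\circ g$. The map $\nu f = f$ is concentrated in degree $0$, so comparing components yields $\pi\circ g_0 = f$ (and $\pi\circ g_n = 0$ for $n\neq 0$). Hence $g_0\colon \tilde P\to M$ is a graded lift of $f$, and $\tilde P$ is projective in $A\gmof$.

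No step is a real obstacle: the result is a direct unwinding of the definitions once the graded/ungraded Hom decomposition is in hand. The only subtlety worth flagging is distinguishing degree-preserving morphisms in $A\gmof$ from arbitrary $A$-linear maps (which may shift degree) when extracting the degree-$0$ component $g_0$.
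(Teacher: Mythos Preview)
Your proof is correct. Both you and the paper use the graded/ungraded Hom decomposition $\Hom_A(\nu\tilde P,\nu X)=\bigoplus_n\Hom_{A^{\mathrm{gr}}}(\tilde P\langle n\rangle,X)$ to extract a degree-$0$ map from an ungraded one, but the routes diverge slightly. The paper checks projectivity via the \emph{splitting} criterion: given a short exact sequence $0\to M\to N\xrightarrow{f}\tilde P\to 0$, it lifts $\id_P$ to some $g\colon P\to\nu N$, decomposes $g=\sum_i g_i$, and then invokes the Fitting Lemma (locality of $\End_A(P)$) to conclude that some $fg_j$ is invertible, correcting by its inverse to get a graded section. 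You instead use the \emph{lifting} criterion and read off $\pi\circ g_0=f$ directly by comparing homogeneous components, with no appeal to the local ring structure. This is more elementary; indeed the Fitting-Lemma step is not really needed even in the paper's own setup, since $f$ is degree-preserving and $\id_{\tilde P}$ sits in degree $0$, so $fg_0=\id_{\tilde P}$ already holds on the nose.
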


\begin{proof}Let $\tilde P$ be a lift of $P$. Let $0\to M\to N\mapright{f} \tilde P \to 0$ be an exact sequence in $A\gmof$. As $\nu \tilde P = P$ is projective, there exists $g\in \Hom_A (P, \nu N)$ such that $fg = \id_P$. Let $g = \sum_i g_i$ be the decomposition of $g$ into homogeneous components corresponding to the decomposition
$\Hom_A(P, \nu N) = \bigoplus_{n\in\ZZ}\Hom_{A^{\mathrm{gr}}}(\tilde P\langle n \rangle, N)$.
By the Fitting Lemma, $\End_A(P)$ is a local ring. Hence, $fg_j$ is invertible for some $j$. Let $h\in \Hom_{A^{\mathrm{gr}}}(\tilde P\langle -j\rangle, \tilde P )$ denote the inverse of $fg_j$, then $g_jh$ is homogeneous of degree $0$ and $fg_jh=\id_{\tilde P}$. Thus, $P$ is projective. That it is indecomposable is clear.
\end{proof}

\subsection{}
Let $S=S(\fh)$ denote be the algebra of regular functions on $\fh^*$. 
We consider $S$ as an evenly graded algebra with linear functions in degree $2$.
Let $S_+\subset S$ denote the maximal ideal consisting of functions that vanish at $0$. Let $S_+^W\subset S_+$ be the sub-ideal consisting of $W$-invariant (regular action) functions in $S_+$. Set $C=S/S_+^W$, then $C$ is the so-called coinvariant algebra of $W$.

Let $\lambda\in\fh^*$ be integral dominant. Let $W_{\lambda}\subseteq W$ denote the stabilizer of $\lambda$ under the dot action (see \S\ref{ss:weyl}).
\begin{thm}[{\cite[Endomorphismensatz 7]{So90}}]
There is an isomorphism of algebras
\[ \End_{\fg}(\projective{w_0\cdot \lambda}) \simeq C^{\lambda}, \]
where $C^{\lambda}$ denotes the subalgebra of $W_{\lambda}$-invariants in $C$.
\end{thm}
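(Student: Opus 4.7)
I would reduce to the regular case $\lambda=0$ via translation functors, and then prove the Endomorphismensatz $\End_{\fg}(\projective{w_0\cdot 0})\simeq C$ by identifying the image of the center via Harish-Chandra.

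\textbf{Reduction to the regular case.} Standard properties of the translation functor $\pi_{\lambda*}\colon\cO_0\to\cO_\lambda$ (see \cite[\S4]{Ja}) identify $\projective{w_0\cdot\lambda}$ with $\pi_{\lambda*}\projective{w_0\cdot 0}$: the latter is projective (since $\pi_{\lambda*}$ has an exact right adjoint), surjects onto $\pi_{\lambda*}\simple{w_0\cdot 0}\simeq\simple{w_0\cdot\lambda}$ ($w_0$ being longest in its $W_\lambda$-coset), and is indecomposable because $\End_{\fg}(\projective{w_0\cdot 0})$ is local. Now using the two-sided adjunction between $\pi_{\lambda*}$ and $\pi_{\lambda}^{*}$,
\[
\End_{\fg}(\projective{w_0\cdot\lambda})\;\simeq\;\Hom_{\fg}\bigl(\projective{w_0\cdot 0},\,\pi_\lambda^*\pi_{\lambda*}\projective{w_0\cdot 0}\bigr).
\]
The wall-crossing $\pi_\lambda^*\pi_{\lambda*}$ carries a natural $W_\lambda$-action compatible (under the regular-case identification) with the natural $W_\lambda$-action on $C$, and its Verma flag on $\projective{w_0\cdot 0}$ contains each $\Verma{x}$ with multiplicity $|W_\lambda|$. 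These ingredients identify the displayed $\Hom$ with $C^{W_\lambda}=C^\lambda$.

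\textbf{Regular case.} For $\lambda=0$: by Harish-Chandra the center $\fz\simeq S^W$ acts on $\projective{w_0\cdot 0}$ with $S_+^W$ acting nilpotently. The crucial step is to lift this to a canonical algebra map $\varphi\colon S\to\End_{\fg}(\projective{w_0\cdot 0})$ -- for instance, by realizing $\projective{w_0\cdot 0}$ as a specialization of a projective over the deformed algebra $U(\fg)\otimes_{S^W}S$, or by passing through the category of Harish-Chandra bimodules of finite length. Since $S_+^W\subseteq\ker\varphi$, one obtains $\bar\varphi\colon C\to\End_{\fg}(\projective{w_0\cdot 0})$. By BGG reciprocity $[\projective{w_0\cdot 0}:\Verma{x}]=[\Verma{x}:\simple{w_0\cdot 0}]=1$ for every $x\in W$, and since $\dim\Hom_{\fg}(\Verma{x},\Verma{y})\leq 1$, an induction on the length of the Verma flag of $\projective{w_0\cdot 0}$ bounds $\dim_{\CC}\End_{\fg}(\projective{w_0\cdot 0})\leq|W|=\dim_{\CC}C$. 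Injectivity of $\bar\varphi$ is obtained by evaluating classes of $S$ on the one-dimensional $\fh$-highest weight lines of the Verma subquotients $\Verma{x}\subset\projective{w_0\cdot 0}$: the weights $w_0x^{-1}\cdot 0$ are pairwise distinct, so the induced map $C\to\prod_{x\in W}\CC$ is injective on polynomials modulo $S_+^W$, exhibiting $\bar\varphi$ as injective. The dimension bound then forces $\bar\varphi$ to be an isomorphism.

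\textbf{Main obstacle.} The only step not formally forced by the machinery already developed in the paper is the construction of the extension $\varphi\colon S\to\End_{\fg}(\projective{w_0\cdot 0})$ from the $\fz$-action; this requires leaving $\cO$ proper, either via deformed category $\cO$ or via Harish-Chandra bimodules, and is Soergel's original insight. The translation-functor reduction, the dimension bound from BGG reciprocity, and the weight-separation argument for injectivity are all formal consequences of tools already present in the paper.
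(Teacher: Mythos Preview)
The paper does not prove this theorem: it is simply quoted from \cite{So90} and used as a black box. So there is nothing to compare your proposal against within the paper itself.

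On the merits of your sketch: the regular case is essentially right in outline, and you have correctly isolated the genuine difficulty, namely extending the action of $\fz\simeq S^W$ to a map $S\to\End_{\fg}(\projective{w_0\cdot 0})$. The dimension bound is fine (though the cleanest route is $\dim\Hom(\projective{w_0\cdot 0},\Verma{x})=[\Verma{x}:\simple{w_0\cdot 0}]=1$ by projectivity, rather than bounding $\Hom$ between Verma subquotients). Your injectivity argument via ``evaluation on highest-weight lines'' is not quite right as written: the $S$-action produced by the deformation does not literally act on the highest-weight line of $\Verma{x\cdot 0}$ by evaluation at $x\cdot 0$, and in any case $S_+^W$ does not vanish at those points. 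One needs instead to track the deformation parameter, or use Soergel's argument via the socle filtration.

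The reduction step is where there is a real gap. You assert that $\pi_\lambda^*\pi_{\lambda*}$ carries a natural $W_\lambda$-action compatible with the $W_\lambda$-action on $C$. No such action is constructed in the paper, it is not standard, and producing one with the required compatibility is essentially as hard as the theorem you are trying to prove. Soergel's original argument in \cite{So90} does not reduce to the regular case; it treats general $\lambda$ uniformly by working with Harish-Chandra bimodules and the functor $\VV$. If you want a genuine reduction, you would instead show directly that $\VV$ intertwines translation with restriction/induction along $C^\lambda\hookrightarrow C$, which is how \cite{St} proceeds; but that already presupposes the Endomorphismensatz.
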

\subsection{}
Define
\[ \VV\colon \cO_{\lambda} \to C^{\lambda}\mof, \quad M\mapsto \Hom_{\fg}(\projective{w_0\cdot \lambda}, M).\]
\begin{thm}[{\cite[Struktursatz 9]{So90}}]The functor $\VV$ is full and faithful on projective objects.
\end{thm}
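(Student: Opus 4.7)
The plan is to use the Endomorphismensatz as the base case and propagate full faithfulness via adjunction together with the fact that every indecomposable projective in $\cO_\lambda$ is, up to direct summands, reachable from $\projective{w_0\cdot\lambda}$ by iterated translation.

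First I would check the statement tautologically when the source projective is $\projective{w_0\cdot\lambda}$ itself. By definition $\VV M = \Hom_\fg(\projective{w_0\cdot\lambda}, M)$, and the Endomorphismensatz identifies $\VV \projective{w_0\cdot\lambda}$ with $C^\lambda$ as a right module over itself. Hence for any $M\in\cO_\lambda$ the map
\[
\Hom_\fg(\projective{w_0\cdot\lambda}, M) \longrightarrow \Hom_{C^\lambda}(C^\lambda, \VV M) = \VV M
\]
is the identity; in particular it is a bijection. So the result holds whenever either argument is $\projective{w_0\cdot\lambda}$.

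Next I would reduce the general case to this one. The key structural input is that every indecomposable projective $\projective{y}\in\cO_\lambda$ appears as a direct summand of $\theta\projective{w_0\cdot\lambda}$ for some composition $\theta$ of translation functors through walls (translate $\projective{w_0\cdot\lambda}$ from $\cO_\lambda$ into a regular block, then back via a product of wall-crossing functors $\pi_s^*\pi_{s*}$, and reduce via Jantzen's standard translation analysis in the spirit of Prop.\ \ref{translationeffect}). Since $\VV$ is additive, it suffices to prove full faithfulness of $\VV$ when $P = \theta\projective{w_0\cdot\lambda}$ and $P'$ is an arbitrary projective. For such $P$ the adjunction between the translation functors out of and onto a wall gives, for any $M\in\cO_\lambda$,
\[
\Hom_\fg(\theta\projective{w_0\cdot\lambda}, M) \;\simeq\; \Hom_\fg(\projective{w_0\cdot\lambda}, \theta' M) \;=\; \VV(\theta' M),
\]
where $\theta'$ is the adjoint composition of translation functors.

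The step I expect to be the main obstacle is the compatibility of $\VV$ with translation: for each simple reflection $s$ one needs a natural isomorphism $\VV\circ\theta_s \simeq \widehat{\theta_s}\circ \VV$, where $\widehat{\theta_s}$ is the functor $C^\lambda \otimes_{(C^\lambda)^s} (-)$ (or its restriction, in the other direction) on modules over the relevant ring of coinvariants, and moreover the adjunction between $\theta_s$ and its biadjoint on the $\fg$-side must correspond under this isomorphism to the adjunction between induction and restriction on the coinvariant side. This compatibility is proved by applying $\Hom_\fg(\projective{w_0\cdot\lambda},-)$ explicitly to a wall-crossing functor, using the Endomorphismensatz on both the regular and singular blocks, plus the fact that translation sends antidominant projectives to antidominant projectives. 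Once this compatibility is in hand, chaining the isomorphisms
\[
\Hom_{C^\lambda}\bigl(\VV(\theta\projective{w_0\cdot\lambda}),\VV M\bigr) \;\simeq\; \Hom_{C^\lambda}\bigl(\widehat{\theta}\,C^\lambda,\VV M\bigr) \;\simeq\; \widehat{\theta'}(\VV M) \;\simeq\; \VV(\theta' M)
\]
and checking that the comparison map $\Hom_\fg(\theta\projective{w_0\cdot\lambda},M)\to\Hom_{C^\lambda}(\VV(\theta\projective{w_0\cdot\lambda}),\VV M)$ agrees with this identification finishes the proof; naturality of the isomorphisms reduces this final check to the base case $\theta=\id$ handled in the first paragraph.
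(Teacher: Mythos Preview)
The paper gives no proof of this theorem; it is quoted from \cite[Struktursatz 9]{So90}. Your outline is essentially Soergel's own argument there: the Endomorphismensatz handles the base case $P=\projective{w_0\cdot\lambda}$, the compatibility $\VV\theta_s\simeq C\otimes_{C^s}\VV$ (and its between-block analogues) is established directly, and then adjunction together with the fact that every indecomposable projective is a summand of iterated translations of the antidominant one reduces everything to the base case.

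One point worth tightening: in your final chain of isomorphisms you invoke the compatibility $\VV(\theta' P')\simeq\widehat{\theta'}(\VV P')$ with $P'$ an \emph{arbitrary} projective, so you must be sure that your proof of the compatibility is genuinely a natural isomorphism of functors and does not itself presuppose full faithfulness of $\VV$ on $P'$---otherwise the argument is circular. Soergel's direct proof of the compatibility (via the explicit description of translation applied to the antidominant projective, which is what your sketch indicates) avoids this, so you are on the right track; just make the independence of that step from the Struktursatz explicit.
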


\subsection{}\label{s:gradedrings}Certainly $C$ and $C^{\lambda}$ inherit a grading from $\fh$. According to \cite[Thm.\ 2.1]{St}, if $P\in \cO_{\lambda}$ is projective, then $\VV P$ admits a lift. Let $[W/W_{\lambda}]$ denote the set of minimal length coset representatives of $W/W_{\lambda}$. For each $x\in [W/W_{\lambda}]$, let $\widetilde{\VV \projective{x\cdot \lambda}}$ be a fixed lift of $\VV \projective{x\cdot\lambda}$ with highest non-zero component in degree $\ell(x)$. Set
\begin{align*}
A_{\lambda} &= \End_{\fg}(\bigoplus_{x\in [W/W_{\lambda}]} \projective{x\cdot\lambda}) \\
&= \End_{C^{\lambda}}(\bigoplus_{x\in W/W_{\lambda}}\VV \projective{x\cdot \lambda}) \\
&= \bigoplus_{n\in\ZZ}\Hom_{(C^{\lambda})^{\mathrm{gr}}}(\bigoplus_{x\in [W/W_{\lambda}]}\widetilde{\VV P_{x\cdot \lambda}}\langle n\rangle,\bigoplus_{x\in [W/W_{\lambda}]}\widetilde{\VV P_{x\cdot \lambda}}).\end{align*}
In particular, $A_{\lambda}$ is a graded ring. Furthermore, as $\bigoplus_{x\in [W/W_{\lambda}]} \projective{x\cdot \lambda}$ is a minimal projective generator of $\cO_{\lambda}$, there is an equivalence of categories
\[ \cO_{\lambda} \mapright{\sim} A_{\lambda}\mof, \quad M\mapsto \Hom_{A_{\lambda}}(\bigoplus_{x\in [W/W_{\lambda}]} \projective{x\cdot \lambda}, M).\]
We will not distinguish between $\cO_{\lambda}$ and $A_{\lambda}\mof$. If $\lambda=0$, we simply write $A$ instead of $A_0$. Set $\cO_{\lambda}^{\ZZ} = A_{\lambda}\gmof$.

\begin{thm}[{\cite[Thm.\ 8.1, Thm.\ 8.2]{St}}]\label{stadjoints}Let $s\in W$ be a simple reflection. The translation functors $\pi_{s*}$ and $\pi_s^*$ are gradable. More precisely, there exist functors $\theta_0^{\lambda}\colon \cO_0^{\ZZ} \to \cO_{\lambda}^{\ZZ}$ and $\pi^0_{\lambda}\colon \cO_{\lambda}^{\ZZ}\to \cO_0^{\ZZ}$ that commute with grading shifts and are such that the following diagrams commute
\[\xymatrix{
\cO_0^{\ZZ}\ar[r]^-{\theta_0^{\lambda}}\ar[d]_-{\nu} & \cO_{\lambda}^{\ZZ}\ar[d]^-{\nu} \\
\cO_0\ar[r]^-{\pi_{s*}} & \cO_{\lambda}
}\qquad
\xymatrix{
\cO_{\lambda}^{\ZZ}\ar[r]^-{\theta_{\lambda}^0}\ar[d]_-{\nu} & \cO_{0}^{\ZZ}\ar[d]^-{\nu} \\
\cO_{\lambda}\ar[r]^-{\pi_{s}^*} & \cO_{0}
}
\]
(here $\lambda$ is an integral dominant weight with stabilizer $\{e,s\}$).
\end{thm}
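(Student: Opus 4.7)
The plan is to reduce the construction of graded lifts to Soergel's combinatorial description of translation functors. By the Struktursatz, $\VV$ is fully faithful on projectives, and both $\pi_{s*}$ and $\pi_s^*$ preserve projectives, so it suffices to describe these functors via natural, graded operations on modules over $C$ and $C^\lambda$, and then propagate the description to all of $\cO_0^\ZZ$ and $\cO_\lambda^\ZZ$.

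First I would identify, via $\VV$, the functor $\pi_{s*}$ with restriction of scalars along the inclusion of graded algebras $C^\lambda \hookrightarrow C$, and $\pi_s^*$ with extension of scalars $- \otimes_{C^\lambda} C$. This is essentially Soergel's theorem on projective functors: the identification is pinned down by its value on the antidominant projective $\projective{w_0 \cdot 0}$ (whose $\VV$-image is the regular module $C$), and projective functors are determined by this value together with the Endomorphismensatz. Since $C^\lambda \hookrightarrow C$ is a morphism of graded algebras, both restriction and $- \otimes_{C^\lambda} C$ lift canonically to the graded module categories and commute with the shifts $\langle n \rangle$.

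Next I would transport these descriptions back to $\cO_0^\ZZ \simeq A_0\gmof$ and $\cO_\lambda^\ZZ \simeq A_\lambda\gmof$. Apply the graded version of $\VV$ (which exists on projectives by \cite[Thm.\ 2.1]{St}, with the fixed lifts $\widetilde{\VV \projective{x\cdot\lambda}}$) to the minimal graded projective generators, and form the resulting graded $(A_0, A_\lambda)$-bimodule $M_*$ representing the graded restriction of scalars, and the graded $(A_\lambda, A_0)$-bimodule $M^*$ representing $- \otimes_{C^\lambda} C$. Define $\theta_0^\lambda = - \otimes_{A_0} M_*$ and $\theta_\lambda^0 = - \otimes_{A_\lambda} M^*$. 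These functors commute with grading shifts by construction, and after applying $\nu$ they reduce to tensoring with the underlying ungraded bimodules, which by the previous step represent $\pi_{s*}$ and $\pi_s^*$; this gives the commutativity of the two displayed squares.

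The main obstacle is the first step, namely pinning down the precise grading shift in the identifications $\VV \pi_{s*} \simeq (-)|_{C^\lambda}$ and $\VV \pi_s^* \simeq - \otimes_{C^\lambda} C$. The normalisation that the top non-zero component of $\widetilde{\VV \projective{x\cdot\lambda}}$ lies in degree $\ell(x)$ is exactly what forces no additional shift to appear, but verifying this requires a direct computation on the antidominant projective. Once that is established, the rest of the argument is standard bookkeeping with graded bimodules.
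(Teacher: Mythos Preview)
The paper does not give its own proof of this theorem; it is quoted verbatim as \cite[Thm.\ 8.1, Thm.\ 8.2]{St} and used as a black box. So there is nothing in the paper to compare against.

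That said, your outline is essentially the argument in \cite{St}. One point of precision: in Stroppel's actual construction the graded lifts are not defined directly as restriction and induction along $C^\lambda\hookrightarrow C$, but rather as tensoring with explicit graded bimodules built from $\Hom_{C^\lambda}(\VV P_\lambda,\mathrm{res}\,\VV P)$ and its variants (see the Warning immediately following the theorem in the paper, which quotes the relevant formulas). This is equivalent to what you wrote, since $C$ is free of finite rank over $C^\lambda$, but the bimodule formulation is what makes the passage from $C$-modules back to $A\gmof$ clean: one simply applies $\Hom$ against the fixed graded projective generator and reads off a graded $(A_0,A_\lambda)$-bimodule. Your last paragraph correctly identifies the only genuine work, namely checking that the chosen normalisation of the lifts $\widetilde{\VV P_{x\cdot\lambda}}$ produces no spurious shift; in \cite{St} this is handled by a direct computation on the antidominant projective, exactly as you suggest.
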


\begin{thm}[{\cite[Thm.\ 8.4]{St}}]
The functor $\theta^0_{\lambda}$ is left adjoint to $\theta_0^{\lambda}\langle -1\rangle$ and the functor $\theta^0_{\lambda}\langle -1\rangle$ is right adjoint to $\theta_{0}^{\lambda}$.
\end{thm}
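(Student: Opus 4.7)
The plan has three steps.

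First, I would use the principle that a graded lift of a functor is determined up to grading shift. By Theorem \ref{stadjoints}, the ungraded translation functors $\pi_{s*}$ and $\pi_s^*$ admit graded lifts $\theta_0^\lambda$ and $\theta_\lambda^0$ commuting with $\langle 1\rangle$. The unit $\eta\colon \id \to \pi_s^*\pi_{s*}$ and counit $\varepsilon\colon \pi_s^*\pi_{s*} \to \id$ of the ungraded adjunction $(\pi_s^*,\pi_{s*})$ are natural transformations between ungraded functors whose source and target both admit graded lifts. Since $\nu$ is faithful and since for any $X,Y$ in $\cO_0^{\ZZ}$ (resp.\ $\cO_\lambda^{\ZZ}$) one has $\Hom_{A}(\nu X,\nu Y) = \bigoplus_{n}\Hom_{A^{\mathrm{gr}}}(X\langle n\rangle,Y)$, each such natural transformation decomposes uniquely into homogeneous pieces.

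Second, I would pin down the correct degree by direct computation on a well-chosen object, for instance the fixed graded lift $\widetilde{\Verma e}$ of $\Verma e$. By Proposition \ref{translationeffect}(i), the ungraded module $\pi_s^*\pi_{s*}\Verma e$ sits in a short exact sequence whose other Verma subquotient is $\Verma s$. Lifting this to the graded setting via $\theta_\lambda^0\theta_0^\lambda$ and using Stroppel's normalization (the graded projective $\widetilde{\VV \projective{x}}$ has highest non-zero component in degree $\ell(x)$, which forces a canonical graded lift of each Verma), the `other' Verma subquotient in the graded filtration comes out shifted by $\langle -1\rangle$. Since by Lemma \ref{adjinjective}(i) the unit $\eta'$ is injective and identifies with one of the two edges of this filtration, its graded lift is forced to be homogeneous of a single prescribed degree; the same argument applied to the counit of the other biadjunction $(\pi_{s*},\pi_s^*)$ picks out the matching degree for $\varepsilon$.

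Third, with the shift identified, the snake identities in the graded setting follow automatically from the ungraded ones by faithfulness of $\nu$. This yields the adjunction $(\theta_\lambda^0,\theta_0^\lambda\langle -1\rangle)$. The second adjunction, $\theta_\lambda^0\langle -1\rangle$ right adjoint to $\theta_0^\lambda$, is obtained by the analogous argument starting from the other ungraded biadjunction $(\pi_{s*},\pi_s^*)$, and is in fact symmetric with the first under swapping the roles of $\pi_s^*$ and $\pi_{s*}$.

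The main obstacle lies in the degree calculation of the second step: one has to trace through Stroppel's grading conventions for lifted projectives and Vermas carefully, verify that the ungraded unit and counit lift to homogeneous natural transformations concentrated in a single degree, and check that this degree is exactly $\pm 1$ and not, say, $\pm 2$ (note that $S$ is evenly graded, so the naive expectation would be an even shift — the fact that the correct answer is $\langle -1\rangle$ comes from Stroppel's explicit normalization of graded lifts via the highest component convention). Once this shift is in hand, the rest of the argument is purely formal.
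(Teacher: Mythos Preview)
The paper does not give its own proof of this statement: it is quoted verbatim from \cite[Thm.\ 8.4]{St}, and the only thing the paper adds is the Warning immediately following, which corrects a sign in Stroppel's formulation. So there is no in-paper argument to compare your proposal against.

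That said, the Warning tells you what Stroppel's actual proof looks like, and it is quite different from yours. Stroppel realizes the graded translation functors concretely as tensoring with explicit graded $(C,C^\lambda)$-bimodules built from $\VV P$ and $\VV P_\lambda$; the adjunction is then a graded $\Hom$--tensor adjunction, and the shift $\langle -1\rangle$ drops out of an explicit identification of the dual bimodule $W^{\circledast}$. Your route is instead to lift the ungraded unit and counit abstractly and detect the degree on a test object.

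There is a genuine gap in your Step~2. You assert that the ungraded unit and counit lift to natural transformations that are homogeneous \emph{of a single degree}, and that checking one object suffices to identify that degree. Neither claim is automatic. A natural transformation between ungraded functors with graded lifts decomposes a priori into a sum of homogeneous components $\eta=\sum_n\eta_n$, and evaluating on a single object cannot by itself rule out contributions in several degrees. What makes this work in practice is exactly the bimodule description: natural transformations between the functors $-\otimes W$ correspond to bimodule maps, and one checks directly that the relevant bimodule maps are homogeneous. But once you grant yourself that, you are essentially redoing Stroppel's computation, and the ``formal lift'' framing is not buying you anything. Separately, the degree computation you sketch (the other Verma subquotient appearing with shift $\langle -1\rangle$) is precisely the content of Thm.~\ref{ses}, which in the paper's logical order sits \emph{after} the theorem you are trying to prove; in \cite{St} the corresponding statement is established independently, so there is no circularity, but you should flag that dependence rather than deriving it on the fly from Prop.~\ref{translationeffect}.
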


\begin{warning}There is a misprint in \cite[Thm.\ 8.4]{St}. The result therein states that $\theta_{\lambda}^0$ is left adjoint to $\theta_0^{\lambda}\langle 1 \rangle$. However, examining its proof, we have $\Hom_{\cO_0^{\ZZ}}(\theta_{\lambda}^0M, N) \simeq \Hom_{\cO_0^{\ZZ}}(M, N\otimes W^{\circledast})$. Where, in the notation of \cite{St}, $W^{\circledast}= \Hom_{C^{\lambda}}(\VV P_{\lambda}, \mathrm{res}\, \VV P)\langle -1\rangle$ (see two lines above \cite[Cor.\ 8.5]{St}). Further, $\theta_0^{\lambda}=-\otimes\Hom_{C^{\lambda}}(\VV P_{\lambda}, \mathrm{res}\, \VV P)$ in \cite{St} (see \cite[Thm.\ 8.1]{St}).
\end{warning}

\subsection{}We now work mainly with the principal block, i.e., the categories $\cO_0$ and $\cO_0^{\ZZ}$. For each $x\in W$, set
\[ \gprojective{x} = \bigoplus_{n\in\ZZ}\Hom_{C^{\mathrm{gr}}}(\bigoplus_{x\in W} \widetilde{\VV\projective{x}}\langle n\rangle, \widetilde{\VV \projective{x}}).\]
By definition, $\gprojective{x}\in\cO^{\ZZ}_0$ is a lift of $\projective{x}$; by Prop.\ \ref{projprop}, each $\gprojective{x}$ is an indecomposable projective in $\cO^{\ZZ}_0$. Let $\gsimple{x}$ denote the unique irreducible quotient of $\gprojective{x}$. Certainly $\nu \gsimple{x}$ is irreducible, we deduce that $\gsimple{x}$ is a lift of $\simple{x}$. By \cite[Thm.\ 2.1]{St}, the $\gsimple{x}$ are concentrated in degree $0$. Finally, according to \cite[\S3.3]{St}, Verma modules admit lifts. We let $\gVerma{x}$ denote the lift of $\Verma{x}$ that has $\gsimple{x}$ as its unique simple quotient.
\begin{warning}Not all objects of $\cO$ lift, see \cite[\S4]{St}.
\end{warning}

\subsection{}Let $s$ be a simple reflection and let $\theta_0^{\lambda}$ and $\theta_{\lambda}^0$ be as in Thm.\ \ref{stadjoints}. Let $\theta_s = \theta_{\lambda}^0\theta_0^{\lambda}$.
\begin{thm}[{\cite[Thm.\ 3.6, Thm.\ 5.3]{St}}]\label{ses}Let $x\in W$.
\begin{enumerate}
\item If $sx<x$, then there is a short exact sequence
\[ 0 \to \gVerma{x}\langle 1 \rangle \to \theta_s \gVerma{x} \to \gVerma{sx} \to 0.\]
\item If $x<sx$, then there is a short exact sequence
\[ 0 \to \gVerma{sx} \to\theta_s\gVerma{x} \to \gVerma{x}\langle -1\rangle \to 0.\]
\end{enumerate}
\end{thm}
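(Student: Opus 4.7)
The strategy is to first identify the ungraded Verma filtration of $\pi_s^*\pi_{s*}\Verma x$ furnished by Prop.\ \ref{translationeffect}(i), lift the resulting short exact sequence to the graded category via Thm.\ \ref{stadjoints}, and pin down the grading shifts using the adjunctions of that theorem.

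For the ungraded version, Prop.\ \ref{translationeffect}(i) gives a two-step filtration of $\pi_s^*\pi_{s*}\Verma x$ with successive Verma quotients $\Verma x$ and $\Verma{sx}$. To identify the submodule, I use the equality $\pi_{s*}\Verma x = \pi_{s*}\Verma{sx}$ (both are the Verma in $\cO_\lambda$ of weight $w_0 x^{-1}\cdot\lambda$, since $s\cdot\lambda=\lambda$) together with Lemma \ref{adjinjective}(i): the unit $\eta'\colon \Verma{sx}\to \pi_s^*\pi_{s*}\Verma{sx}=\pi_s^*\pi_{s*}\Verma x$ is injective, exhibiting $\Verma{sx}$ as the submodule in case (ii) ($x<sx$); symmetrically $\Verma x$ is the submodule in case (i). By Thm.\ \ref{stadjoints}, $\theta_s\gVerma x$ is a graded lift of $\pi_s^*\pi_{s*}\Verma x$, and since indecomposable graded lifts are unique up to grading shift, the ungraded quotient map lifts to a graded surjection $\theta_s\gVerma x\twoheadrightarrow \gVerma{\mathrm{quo}}\langle n\rangle$ with graded kernel $\gVerma{\mathrm{sub}}\langle m\rangle$. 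This yields a graded short exact sequence of the announced shape with a priori unknown shifts $m,n\in\ZZ$.

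The main obstacle is to show $(m,n)=(1,0)$ in case (i) and $(0,-1)$ in case (ii). My plan is to exploit the graded adjunctions of Thm.\ \ref{stadjoints}: composing the two yields
\[\Hom_{\cO_0^{\ZZ}}(\theta_s M,N)\simeq\Hom_{\cO_0^{\ZZ}}(M,\theta_s N\langle -2\rangle),\]
so that $\theta_s\langle 1\rangle$ is graded self-adjoint. Combined with the normalizations placing each $\gsimple y$ in degree $0$ and the head of each $\gVerma y$ in degree $0$, this allows one to compute $\Hom_{\cO_0^{\ZZ}}(\theta_s\gVerma x,\gsimple y\langle k\rangle)$ for all $k$ and so read off the graded degree of the simple head of $\theta_s\gVerma x$, which pins down $n$. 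The base case $x=e$, where $\gVerma e=\gsimple e$ and $\theta_s\gsimple e$ is a graded lift of the indecomposable projective $\projective s$ (whose graded structure is standard), provides a concrete anchor. Finally, substituting $sx$ for $x$ swaps cases (i) and (ii) while leaving $\pi_s^*\pi_{s*}\Verma x$ invariant, so the two cases constrain each other and both sets of shifts are determined simultaneously.
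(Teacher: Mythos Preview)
The paper does not prove this statement; it is quoted verbatim from \cite[Thm.\ 3.6, Thm.\ 5.3]{St}, where Stroppel establishes it by working on the other side of Soergel's functor $\VV$, computing the graded Verma filtrations explicitly for modules over the coinvariant algebra. Your proposal is therefore not a variant of a proof in this paper but an independent attempt, and your route---bootstrapping from the ungraded filtration via the graded adjunctions---is quite different from Stroppel's computational one.

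There is, however, a genuine gap. The inference ``since indecomposable graded lifts are unique up to grading shift, the ungraded quotient map lifts to a graded surjection'' is not valid: uniqueness of lifts is a statement about \emph{objects}, not about morphisms or exact sequences. Knowing that $\theta_s\gVerma{x}$ lifts $\pi_s^*\pi_{s*}\Verma{x}$ does not by itself tell you that the kernel of the ungraded surjection onto $\Verma{\mathrm{quo}}$ is a graded submodule, and without that you have no graded short exact sequence to speak of.

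The repair is to reverse the order of your argument: produce the graded map \emph{first}. The unit of the graded adjunction $(\pi_{s*},\pi_s^!)$ (available immediately from the theorem on graded adjoints) gives a genuine morphism $\gVerma{x}\to\pi_s^!\pi_{s*}\gVerma{x}=\theta_s\gVerma{x}\langle -1\rangle$ in $\cO_0^{\ZZ}$, i.e.\ a graded map $\gVerma{x}\langle 1\rangle\to\theta_s\gVerma{x}$. Applying $\nu$ recovers the ungraded unit, which is injective by Lemma \ref{adjinjective}(i); hence the graded map is injective and its cokernel is automatically graded. Forgetting the grading, that cokernel is $\Verma{sx}$ by the sharpened form of Prop.\ \ref{translationeffect}(i) (as used in the proof of Prop.\ \ref{behaviourofequivs}), so it equals $\gVerma{sx}\langle n\rangle$ for some $n$. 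At this point your head/adjunction computation is legitimate, though you should be aware that carrying it out still requires some input about graded translation on simples or on Vermas in $\cO_\lambda^{\ZZ}$ that you have not yet supplied; the base case $x=e$ alone does not propagate to general $x$ without such an inductive mechanism.
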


\subsection{}Set 
\[ \pi_{s*} = \theta_0^{\lambda}, \quad \pi_s^*=\theta_{\lambda}^0\langle 1 \rangle, \quad \pi_s^! = \theta_{\lambda}^0\langle -1\rangle. \]
Then we have adjunctions $(\pi_s^*, \pi_{s*})$ and $(\pi_{s*}, \pi_s^!)$. Note that $\pi_s^! = \pi_s^*\langle -2\rangle$.
Let $\eta'$ be the unit of $(\pi_{s*}, \pi_s^!)$ and let $\varepsilon$ be the counit of $(\pi_s^*, \pi_{s*})$. Define complexes of functors
\[ \TT_s = 0 \to \pi_s^*\pi_{s*}\mapright{\varepsilon} \id \to 0 \quad \mbox{and} \quad \TT_s^{-1} = 0 \to \id \mapright{\eta'} \pi_s^!\pi_{s*} \to 0, \]
with $\pi_s^*\pi_{s*}$ (resp.\ $\pi_s^!\pi_{s*}$) in cohomological degree $0$. It is straightforward to verify that there are natural isomorphisms $\nu \TT_s \simeq \Theta_s^*\nu$ and $\nu\TT_s^{-1} \simeq \Theta_s^! \nu$ (see \cite[Prop.\ 2.2]{Bass}).

\begin{thm}\label{dequivOgraded}The functors $\TT_s$ and $\TT_s^{-1}$ are mutually inverse equivalences of $\Db(\cO_0^{\ZZ})$.
\end{thm}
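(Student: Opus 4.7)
The plan is to mimic the argument for Proposition \ref{dequivO} and apply Theorem \ref{mainthm} in the graded setting, with $\cA = \cO_0^{\ZZ}$, $\cB = \cO_\lambda^{\ZZ}$, and the two adjunctions $(\pi_s^*, \pi_{s*})$, $(\pi_{s*}, \pi_s^!)$ constructed in \S\ref{s:gO}. The structural hypotheses of Theorem \ref{mainthm} come for free: the equivalence $\cO_0^{\ZZ} \cong A\gmof$ of \S\ref{s:gradedrings} realizes $\cO_0^{\ZZ}$ as the category of finite-dimensional graded modules over a finite-dimensional graded algebra, so every object has finite length; and $\pi_{s*}, \pi_s^*, \pi_s^!$ are grading shifts of the functors $\theta_0^\lambda, \theta_\lambda^0$ from Theorem \ref{stadjoints}, hence exact (they lift the exact translation functors).

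The real work is to verify the two $K_0$-identities required by Theorem \ref{mainthm}:
\begin{align*}
[\pi_s^* \pi_{s*} \pi_s^! \pi_{s*} X] &= [\pi_s^* \pi_{s*} X] + [\pi_s^! \pi_{s*} X], \\
[\pi_s^! \pi_{s*} \pi_s^* \pi_{s*} X] &= [\pi_s^* \pi_{s*} X] + [\pi_s^! \pi_{s*} X],
\end{align*}
for all $X \in \cO_0^{\ZZ}$. Since the classes $[\gVerma{x}\langle n\rangle]$, $x \in W$, $n \in \ZZ$, form a $\ZZ$-basis of $K_0(\cO_0^{\ZZ})$, by linearity it suffices to check them on each $\gVerma{x}$. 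Because $\theta_\lambda^0$ commutes with grading shifts, one has $\pi_s^*\pi_{s*} = \theta_s\langle 1 \rangle$ and $\pi_s^!\pi_{s*} = \theta_s\langle -1 \rangle$, so Theorem \ref{ses} gives complete control over $[\theta_s \gVerma{x}]$ according to whether $sx < x$ or $x < sx$. Iterating the identity twice (with a case split at each step) yields the required equalities. For example, when $sx < x$, one obtains
\[ [\pi_s^*\pi_{s*} \gVerma{x}] + [\pi_s^!\pi_{s*} \gVerma{x}] = [\gVerma{x}\langle 2 \rangle] + [\gVerma{sx}\langle 1\rangle] + [\gVerma{x}] + [\gVerma{sx}\langle -1 \rangle], \]
and the same total emerges from $[\pi_s^*\pi_{s*}\pi_s^!\pi_{s*} \gVerma{x}]$ by one further application of Theorem \ref{ses} to each of the two Verma classes appearing in $[\pi_s^!\pi_{s*}\gVerma{x}]$ (using the opposite case of Theorem \ref{ses} for $\gVerma{sx}$). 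The case $x < sx$ is analogous.

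With the $K_0$-identities verified, Theorem \ref{mainthm} immediately delivers that the unit $\coev\colon \id \to \TT_s^{-1}\TT_s$ and counit $\ev\colon \TT_s \TT_s^{-1} \to \id$ of the adjunction $(\TT_s, \TT_s^{-1})$ provided by Lemma \ref{transposelemma} and Proposition \ref{complexesadjoint} are isomorphisms in $\Db(\cO_0^{\ZZ})$, whence $\TT_s$ and $\TT_s^{-1}$ are mutually inverse auto-equivalences. The only real obstacle is the routine but slightly fiddly bookkeeping in the two-layer case analysis of the $K_0$ identities, where one must correctly track grading shifts through nested applications of Theorem \ref{ses}; conceptually nothing new beyond the non-graded proof of Proposition \ref{dequivO} is required.
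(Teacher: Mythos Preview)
Your proposal is correct and follows essentially the same route as the paper: apply Theorem \ref{mainthm} after verifying the $K_0$-identities on the basis of graded Verma classes via Theorem \ref{ses}. The paper shortens the bookkeeping slightly by observing that $\pi_s^!\pi_{s*}\pi_s^*\pi_{s*} = \pi_s^*\pi_{s*}\pi_s^!\pi_{s*}$ (both equal $\theta_s\theta_s$), so only one of the two identities needs to be checked explicitly --- but this is already implicit in your remark that $\pi_s^*\pi_{s*} = \theta_s\langle 1\rangle$ and $\pi_s^!\pi_{s*} = \theta_s\langle -1\rangle$.
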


\begin{proof}Let $x\in W$. Using Thm.\ \ref{ses} we compute in $K_0(\cO^{\ZZ}_0)$: if $sx<x$, then $[\pi^*_s\pi_{s*}\gVerma{x}] = [\gVerma{x}\langle 2\rangle] + [\gVerma{sx}\langle 1\rangle]$
\[
[\pi^!_s\pi_{s*}\pi_s^*\pi_{s*}\gVerma{x}] = [\gVerma{x}\langle 2 \rangle]+ [\gVerma{sx}\langle 1 \rangle]+ [\gVerma{x}] + [\gVerma{sx}\langle -1 \rangle]
= [\pi^*_s\pi_{s*}\gVerma{x}] + [\pi^!_s\pi_{s*}\gVerma{x}].
\]
If $sx>x$, then $[\pi^*_s\pi_{s*}\gVerma{x}]= [\gVerma{sx}\langle 1\rangle] + [\gVerma{x}]$.
\[
[\pi^!_s\pi_{s*}\pi_s^*\pi_{s*}\gVerma{x}] = [\gVerma{sx}\langle 1\rangle] + [\gVerma{x}] + [\gVerma{sx}\langle -1\rangle] + [\gVerma{x}\langle -2\rangle]
= [\pi_s^*\pi_{s*}\gVerma{x}] + [\pi_s^!\pi_{s*}\gVerma{x}].
\]
Further, $\pi^!_s\pi_{s*}\pi_s^*\pi_{s*}=\pi_s^*\pi_{s*}\pi_s^!\pi_{s*}$. As the graded Verma modules $\gVerma{x}\langle n\rangle$, $x\in W$, $n\in\ZZ$, constitute a basis of $K_0(\cO_0^{\ZZ})$ we deduce that we are in the situation of Thm.\ \ref{mainthm}. Consequently, $\TT_s$ and $\TT_s^{-1}$ are mutually inverse equivalences.
\end{proof}

\subsection{}By \cite[\S6]{St} there is a `graded duality' $\DD\colon \cO_0^{\ZZ} \to \cO_0^{\ZZ}$. The functor $\DD$ is contravariant, commutes with reflection across the wall (i.e., $\DD\theta_s \simeq \theta_s\DD$) and satisfies the following:
\[ \DD^2 \simeq \id, \quad \DD(M\langle n \rangle) \simeq (\DD M) \langle -n \rangle, \quad \nu(\DD M) \simeq (\nu M)^{\vee}, \quad \DD\gsimple{x} \simeq \gsimple{x}, \]
for all $M\in \cO_0^{\ZZ}$, $n\in \ZZ$ and $x\in W$. We set $\gcoVerma{x} = \DD \gVerma{x}$. It is clear that $\gcoVerma{x}$ is a lift of $\coVerma{x}$.

\begin{lemma}\label{gradedadjinjective}Let $s\in W$ be a simple reflection and let $x\in W$ be arbitrary.
\begin{enumerate}
\item The morphism $\eta'\colon \gVerma{x} \to \pi_s^!\pi_{s*}\gVerma{x}$ is injective.
\item The morphism $\varepsilon\colon \pi_s^*\pi_{s*}\gcoVerma{x}\to \gcoVerma{x}$ is surjective.
\end{enumerate}
\end{lemma}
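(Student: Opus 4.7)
The strategy is to imitate the proof of Lemma \ref{adjinjective} by reducing to the ungraded setting through the forgetful functor $\nu\colon\cO_0^{\ZZ}\to\cO_0$. Since $\nu$ is faithful and exact, a morphism in $\cO_0^{\ZZ}$ is zero (resp.\ injective, resp.\ surjective) precisely when its image under $\nu$ is. Everything else then follows the ungraded template with minor bookkeeping.

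For (i), the identification $\pi_s^!\pi_{s*}=\theta_s\langle -1\rangle$ together with Thm.\ \ref{ses} shows that $\pi_s^!\pi_{s*}\gVerma{x}$ admits a filtration by graded Verma modules; in particular, $\nu(\pi_s^!\pi_{s*}\gVerma{x})=\pi_s^*\pi_{s*}\Verma{x}$ has a Verma flag (as already recorded in Prop.\ \ref{translationeffect}(i)). Since $\pi_{s*}\gVerma{x}$ is a lift of the nonzero module $\pi_{s*}\Verma{x}$, it is itself nonzero, so Lemma \ref{keylem}(i) applied to the graded adjunction $(\pi_{s*},\pi_s^!)$ gives $\eta'_{\gVerma{x}}\neq 0$, and faithfulness of $\nu$ yields $\nu(\eta'_{\gVerma{x}})\neq 0$. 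Exactly as in the proof of Lemma \ref{adjinjective}, a nonzero morphism from a Verma module into a module with a Verma flag is necessarily injective: argue by induction on the length of the flag, composing with the projection onto the top subquotient and invoking Dixmier's theorem \cite[Thm.\ 7.6.6]{Dix} to conclude that a map between Vermas is zero or injective; in the zero case the original map factors through the next step of the flag and we repeat. Thus $\nu(\eta'_{\gVerma{x}})$ is injective, and exactness of $\nu$ lifts this to injectivity of $\eta'_{\gVerma{x}}$.

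Part (ii) is handled by the same scheme. Applying the graded duality $\DD$ to the sequences of Thm.\ \ref{ses}, and using $\DD\theta_s\simeq\theta_s\DD$ together with $\DD\gVerma{x}\simeq\gcoVerma{x}$, produces a filtration of $\theta_s\gcoVerma{x}$, and hence of $\pi_s^*\pi_{s*}\gcoVerma{x}=\theta_s\langle 1\rangle\gcoVerma{x}$, by graded dual Vermas. Since $\pi_{s*}\gcoVerma{x}$ is a lift of the nonzero $\pi_{s*}\coVerma{x}$, Lemma \ref{keylem}(ii) applied to $(\pi_s^*,\pi_{s*})$ yields $\varepsilon_{\gcoVerma{x}}\neq 0$, and faithfulness of $\nu$ gives $\nu(\varepsilon_{\gcoVerma{x}})\neq 0$. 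The dual of the Verma-flag statement (obtained by applying the contravariant duality $\cdot^{\vee}$) asserts that any nonzero morphism from a module with a dual Verma flag onto a dual Verma is surjective; applying it to $\nu(\varepsilon_{\gcoVerma{x}})$ and then using exactness of $\nu$ completes the argument.

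The only input beyond what has already been set up is the Dixmier-flag fact that nonzero maps $\gVerma{x}\to$ (Verma-flag) are injective and, dually, that nonzero maps (dual-Verma-flag) $\to\gcoVerma{x}$ are surjective. This is the same ingredient that powers Lemma \ref{adjinjective}, so the only genuinely new point in the graded case is that $\nu$ interacts cleanly with the units and counits of the relevant adjunctions, which is immediate from its faithfulness and exactness.
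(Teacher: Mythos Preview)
Your argument is correct and is essentially the approach the paper has in mind: the paper's proof reads simply ``Left to the reader (see Lemma \ref{adjinjective})'', and your reduction to the ungraded case via the faithful exact functor $\nu$ together with the Dixmier/flag argument is exactly the intended translation of that lemma to the graded setting. One small cosmetic point: in (ii) you write ``any nonzero morphism from a module with a dual Verma flag \emph{onto} a dual Verma''; you mean ``to'' a dual Verma, with surjectivity being the conclusion rather than the hypothesis.
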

\begin{proof}Left to the reader (see Lemma \ref{adjinjective}).
\end{proof}

\begin{prop}\label{gbehaviourofequivs}Let $s\in W$ be a simple reflection and let $x\in W$.
\begin{enumerate}
\item If $x<sx$, then $\TT_s\gVerma{x}\langle -1\rangle \simeq \gVerma{sx}$.
\item If $x<sx$, then $\TT_s^{-1}\gcoVerma{x}\langle 1 \rangle \simeq \gcoVerma{sx}$.
\item If $sx<x$, then $\TT_{s}^{-1}\gsimple{x} \simeq \gsimple{x}[1]$ (or equivalently $\TT_s \gsimple{x} \simeq \gsimple{x}[1]$).
\end{enumerate}
\end{prop}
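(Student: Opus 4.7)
The proof runs parallel to Prop.~\ref{behaviourofequivs}, with graded shifts tracked via Thm.~\ref{ses} and the identifications $\pi_s^*\pi_{s*}\simeq\theta_s\langle 1\rangle$ and $\pi_s^!\pi_{s*}\simeq\theta_s\langle -1\rangle$, which follow directly from $\pi_{s*}=\theta_0^\lambda$, $\pi_s^*=\theta_\lambda^0\langle 1\rangle$, $\pi_s^!=\theta_\lambda^0\langle -1\rangle$ and the definition $\theta_s=\theta_\lambda^0\theta_0^\lambda$.

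For (i), by the invertibility of $\TT_s$ it is equivalent to prove $\TT_s^{-1}\gVerma{sx}\simeq\gVerma{x}\langle -1\rangle$. By construction $\TT_s^{-1}\gVerma{sx}$ is the two-term complex $\gVerma{sx}\xrightarrow{\eta'}\pi_s^!\pi_{s*}\gVerma{sx}=\theta_s\gVerma{sx}\langle -1\rangle$ with the target in cohomological degree $0$; Lemma~\ref{gradedadjinjective}~(i) forces $\eta'$ to be injective, so the complex is quasi-isomorphic to $\mathrm{coker}(\eta')$ placed in degree $0$. Applying the forgetful functor $\nu$, the compatibility $\nu\TT_s^{-1}\simeq\Theta_s^!\nu$ together with Prop.~\ref{behaviourofequivs}~(i) gives $\nu\,\mathrm{coker}(\eta')\simeq\Verma{x}$; thus $\mathrm{coker}(\eta')$ is a graded lift of the indecomposable $\Verma{x}$, and so isomorphic to $\gVerma{x}\langle n\rangle$ for some unique $n\in\ZZ$. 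Thm.~\ref{ses}~(i) applied to $sx$ (using $s\cdot sx=x<sx$) yields $0\to\gVerma{sx}\langle 1\rangle\to\theta_s\gVerma{sx}\to\gVerma{x}\to 0$; after a $\langle -1\rangle$ twist this gives $[\theta_s\gVerma{sx}\langle -1\rangle]=[\gVerma{sx}]+[\gVerma{x}\langle -1\rangle]$ in $K_0(\cO_0^{\ZZ})$, and the resulting equality $[\mathrm{coker}(\eta')]=[\gVerma{x}\langle -1\rangle]$ forces $n=-1$.

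For (ii), I would appeal to the graded duality $\DD$. From $\DD\theta_s\simeq\theta_s\DD$ and $\DD(M\langle n\rangle)\simeq(\DD M)\langle -n\rangle$ one obtains $\DD\pi_s^*\simeq\pi_s^!\DD$ and $\DD\pi_{s*}\simeq\pi_{s*}\DD$; contravariance of $\DD$ then swaps the two cohomological degrees of the complex $\TT_s$ and yields $\DD\TT_s\DD\simeq\TT_s^{-1}$. Applying $\DD$ to the isomorphism in (i) and using $\DD\gVerma{y}=\gcoVerma{y}$ delivers (ii).

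For (iii), the key input is the vanishing $\pi_{s*}\gsimple{x}=0$ when $sx<x$. Non-graded, Prop.~\ref{translationeffect}~(iii) gives $\pi_s^*\pi_{s*}\simple{x}=0$, and the adjunction $(\pi_s^*,\pi_{s*})$ then forces $\pi_{s*}\simple{x}=0$ (the identity of $\pi_{s*}\simple{x}$ corresponds under the adjunction to an element of $\mathrm{Hom}(\pi_s^*\pi_{s*}\simple{x},\simple{x})=0$). Faithfulness of $\nu$ upgrades this to $\theta_0^\lambda\gsimple{x}=0$, hence $\pi_s^!\pi_{s*}\gsimple{x}=0$, so $\TT_s^{-1}\gsimple{x}$ collapses to $\gsimple{x}$ in cohomological degree $-1$, i.e.\ $\gsimple{x}[1]$; the parenthetical assertion follows by applying $\TT_s$. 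The main obstacle is the shift-tracking in (i): the qualitative conclusion that $\TT_s^{-1}\gVerma{sx}$ is a graded lift of $\Verma{x}$ is immediate from the non-graded result via $\nu$, but identifying the precise shift $n=-1$ is the only genuinely graded computation and must be carried out via the $K_0$-class bookkeeping supplied by Thm.~\ref{ses}.
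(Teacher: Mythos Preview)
Your arguments for (i) and (iii) are correct and close to the paper's. For (i) the paper is slightly more direct: it observes that Thm.~\ref{ses}(i), shifted by $\langle -1\rangle$, displays $\pi_s^!\pi_{s*}\gVerma{sx}=\theta_s\gVerma{sx}\langle -1\rangle$ as an extension of $\gVerma{x}\langle -1\rangle$ by $\gVerma{sx}$, so the injective map $\eta'$ of Lemma~\ref{gradedadjinjective}(i) must be (up to scalar) the inclusion in that sequence, whence the cokernel is $\gVerma{x}\langle -1\rangle$ on the nose. Your detour through $\nu$ and $K_0$ reaches the same conclusion and is perfectly fine. Part (iii) matches the paper's argument.

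For (ii) there is a genuine gap. Your reduction rests on the claimed functorial isomorphism $\DD\TT_s\DD\simeq\TT_s^{-1}$, which you derive from ``$\DD\pi_s^*\simeq\pi_s^!\DD$ and $\DD\pi_{s*}\simeq\pi_{s*}\DD$''. But $\DD$ is only defined on $\cO_0^{\ZZ}$, so those two identities do not even typecheck; only the composite statement $\DD(\pi_s^*\pi_{s*})\DD\simeq\pi_s^!\pi_{s*}$ is available (via $\DD\theta_s\simeq\theta_s\DD$ and $\DD\langle 1\rangle\simeq\langle -1\rangle\DD$). Granting that, one still has to check that under this identification the dualized counit $\DD(\varepsilon_{\DD(-)})\colon \id\to\pi_s^!\pi_{s*}$ agrees with the unit $\eta'$. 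This is plausible but is not a formal consequence of the listed properties of $\DD$; it would require relating the two adjunctions through a duality on $\cO_\lambda^{\ZZ}$ that has not been set up. The paper sidesteps this entirely by arguing for (ii) in parallel with (i): applying $\DD$ to the short exact sequence of Thm.~\ref{ses}(i) exhibits $\pi_s^*\pi_{s*}\gcoVerma{sx}=\DD(\theta_s\gVerma{sx}\langle -1\rangle)$ as an extension of $\gcoVerma{sx}$ by $\gcoVerma{x}\langle 1\rangle$, and then Lemma~\ref{gradedadjinjective}(ii) identifies $\ker(\varepsilon)\simeq\gcoVerma{x}\langle 1\rangle$, giving $\TT_s\gcoVerma{sx}\simeq\gcoVerma{x}\langle 1\rangle$. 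I would recommend you replace your (ii) with this direct argument.
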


\begin{proof}This is proved in exactly the same way as Prop.\ \ref{behaviourofequivs}. If $x<sx$, then by Thm.\ \ref{ses}(i), the object $\pi_s^!\pi_{s*}\gVerma{sx}=\theta_s \gVerma{sx}\langle -1 \rangle$ represents a class in $\Ext^1(\gVerma{x}\langle -1\rangle,\gVerma{sx})$. Using Lemma \ref{gradedadjinjective} (i) we deduce that $\TT_s^{-1}\gVerma{sx}\simeq \gVerma{x}\langle -1 \rangle$. This shows (i). For (ii), we have 
\[ \pi_{s}^*\pi_{s*}\gcoVerma{sx}=\theta_s\gcoVerma{sx}\langle 1 \rangle) = \theta_s\DD(\gVerma{sx}\langle -1\rangle)=\DD(\theta_s\gVerma{sx}\langle -1\rangle). \]
So, applying $\DD$ to Thm.\ \ref{ses} (i), we deduce that $\pi_{s}^*\pi_{s*}\gcoVerma{sx}$ represents a class in $\Ext^1(\gcoVerma{sx}, \gcoVerma{x}\langle 1\rangle))$. Using Lemma \ref{gradedadjinjective} (ii) we obtain that $\TT_s\gcoVerma{sx} \simeq \gcoVerma{x}\langle 1\rangle$. This proves (ii). If $sx<x$, then by Thm.\ \ref{stadjoints} and Prop.\ \ref{translationeffect} (iii), we have that $\nu\pi_s^*\pi_{s*}\gsimple{x} =0$. Thus, $\pi_{s}^*\pi_{s*}\gsimple{x} =0$. This implies (iii).
\end{proof}

\begin{prop}[{cf.\ Thm.\ \ref{bottsthm}}]
Let $n\in \ZZ$, $x\in W$ and let $w_0$ be the longest element in $W$. Then
\[ \Ext^i(\gVerma{x}\langle n \rangle ,\gsimple{w_0}) = \begin{cases}
\CC &\mbox{if $i=\ell(ww_0)$ and $n=-\ell(ww_0$);} \\
0 &\mbox{otherwise}.
\end{cases}\]
\end{prop}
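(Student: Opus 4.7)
The plan is to follow the proof of Thm.\ \ref{bottsthm} essentially verbatim, tracking the additional grading data. Set $m=\ell(xw_0)$ and pick simple reflections $s_1,\ldots,s_m$ such that $s_1\cdots s_mx=w_0$ and each partial product $s_i\cdots s_m x$ has length exactly one greater than $s_{i+1}\cdots s_m x$; such a sequence exists because $w_0$ is the longest element of $W$.

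Applying the inverted form of Prop.\ \ref{gbehaviourofequivs}(i)---namely $\TT_s^{-1}\gVerma{sy}\simeq\gVerma{y}\langle -1\rangle$ when $y<sy$---once at each step, and using that $\TT_s^{-1}$ commutes with grading shifts, induction on $m$ yields
\[
\TT_{s_m}^{-1}\cdots\TT_{s_1}^{-1}\gVerma{w_0}\simeq\gVerma{x}\langle -m\rangle.
\]
Since each $\TT_{s_i}^{-1}$ is the inverse of the auto-equivalence $\TT_{s_i}$ by Thm.\ \ref{dequivOgraded}, adjunction then gives
\[
\Ext^i(\gVerma{x}\langle n\rangle,\gsimple{w_0})\simeq\Ext^i\bigl(\gVerma{w_0}\langle n+m\rangle,\TT_{s_1}\cdots\TT_{s_m}\gsimple{w_0}\bigr).
\]
Because $sw_0<w_0$ for every simple reflection $s$, Prop.\ \ref{gbehaviourofequivs}(iii) gives $\TT_s\gsimple{w_0}\simeq\gsimple{w_0}[-1]$, and iterating $m$ times produces $\TT_{s_1}\cdots\TT_{s_m}\gsimple{w_0}\simeq\gsimple{w_0}[-m]$.

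Substituting, the Ext rewrites as $\Ext^{i-m}(\gVerma{w_0}\langle n+m\rangle,\gsimple{w_0})$. The Verma module $\Verma{w_0}=M_0$ is the indecomposable projective cover of $\simple{w_0}$ in $\cO_0$, so by Prop.\ \ref{projprop} its graded lift $\gVerma{w_0}$ is projective in $\cO_0^{\ZZ}$. Thus the Ext group vanishes unless $i=m$; when $i=m$ it reduces to $\Hom_{\cO_0^{\ZZ}}(\gVerma{w_0}\langle n+m\rangle,\gsimple{w_0})$, which equals $\CC$ precisely when $n+m=0$ (as $\gsimple{w_0}$ is concentrated in degree $0$ and is the head of $\gVerma{w_0}$) and vanishes otherwise. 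The only delicate step is the grading bookkeeping: one must verify that the shifts $\langle -1\rangle$ accumulating on the Verma side and the cohomological shifts $[-1]$ accumulating on the simple side conspire to single out the pair $(i,n)=(\ell(xw_0),-\ell(xw_0))$.
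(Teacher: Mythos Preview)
Your proof is correct and follows essentially the same route as the paper's own argument: reduce to $\gVerma{w_0}$ via iterated $\TT_{s_j}^{-1}$, pass the functors across by adjunction using Thm.\ \ref{dequivOgraded}, collapse $\TT_{s_1}\cdots\TT_{s_m}\gsimple{w_0}$ to $\gsimple{w_0}[-m]$ via Prop.\ \ref{gbehaviourofequivs}(iii), and finish with projectivity of $\gVerma{w_0}=\gprojective{w_0}$. Your invocation of Prop.\ \ref{projprop} to justify projectivity of the graded lift is a welcome extra detail that the paper leaves implicit.
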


\begin{proof}
This is proved in exactly the same way as Thm.\ \ref{bottsthm}.
Let $s_1,\ldots, s_m$ be a sequence of simple reflections such that $s_1\cdots s_mw=w_0$ and $\ell(s_i\cdots s_mw_0)< \ell(s_{i-1}\cdots s_m w_0)$ for each $1< i < m+1$. Note that $m=\ell(w_0)-\ell(w) = \ell(ww_0)$. We have
\begin{align*}
\Ext^i(\gVerma{x}\langle n \rangle, \gsimple{w_0}) & = \Ext^i(\TT_{s_m}^{-1} \cdots \TT_{s_1}^{-1}\gVerma{w_0}\langle m+n\rangle, \gsimple{w_0}) \\
&= \Ext^i(\gprojective{w_0}\langle m+n\rangle, \TT_{s_1}\cdots \TT_{s_m}\gsimple{w_0}) \\
&= \Ext^{i-\ell(ww_0)}(\gprojective{w_0}\langle m+n \rangle, \gsimple{w_0}) \\
&=\begin{cases}
\CC &\mbox{if $i=\ell(ww_0)$ and $n=-\ell(ww_0$);} \\
0 &\mbox{otherwise}.
\end{cases}\qedhere
\end{align*}
\end{proof}

\subsection{}For each $w\in W$ fix a reduced word $w=s\cdots t$. Set
\[ \TT_{w} = \TT_s \cdots \TT_t \quad \mbox{and} \quad \TT_w^{-1}= \TT_t^{-1} \cdots \TT_s^{-1}.\]
\begin{thm}[\cite{Ro}]\label{gradedbraid}Let $w,w'\in W$. If $\ell(ww')=\ell(w)+\ell(w')$, then
\[ \TT_w\TT_{w'} \simeq \TT_{ww'}.\]
\end{thm}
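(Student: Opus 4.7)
The plan is to follow Rouquier's proof of the non-graded braid relations (Theorem \ref{braidrelsO}) and verify that each step can be carried out while tracking grading shifts. By Matsumoto's theorem, any two reduced expressions for a single element of $W$ can be connected by a sequence of braid moves, so the theorem reduces to the braid relations between pairs of simple reflections: one must show
\[ \underbrace{\TT_s \TT_t \TT_s \cdots}_{m_{st} \text{ factors}} \simeq \underbrace{\TT_t \TT_s \TT_t \cdots}_{m_{st} \text{ factors}}, \]
where $m_{st}$ is the order of $st$ in $W$.

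For the commuting case $m_{st} = 2$, the underlying translation functors act via tensoring with certain finite-dimensional $\fg$-modules which commute in a tensor sense, producing a canonical isomorphism $\pi_s^* \pi_{s*} \pi_t^* \pi_{t*} \simeq \pi_t^* \pi_{t*} \pi_s^* \pi_{s*}$. Using the gradings of Theorem \ref{stadjoints}, this isomorphism respects degrees and is compatible with the units/counits defining the $\TT_s$, giving $\TT_s \TT_t \simeq \TT_t \TT_s$. For $m_{st} \geq 3$ I would adapt Rouquier's explicit construction of the isomorphism between the two $m_{st}$-fold compositions, using the short exact sequences of Theorem \ref{ses} (which encode the precise grading shifts in the $\theta_s$-filtration of a graded Verma) and the adjunctions of Theorem \ref{stadjoints} (which pin down the homogeneous degrees of units and counits). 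With this grading bookkeeping done, Rouquier's argument transfers essentially verbatim.

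A potentially cleaner alternative is a lifting argument. Since $\nu \TT_s \simeq \Theta_s^* \nu$, iterating gives $\nu(\TT_w \TT_{w'}) \simeq \Theta_w^* \Theta_{w'}^* \nu \simeq \Theta_{ww'}^* \nu \simeq \nu \TT_{ww'}$ by Theorem \ref{braidrelsO}, so both sides are graded lifts of the same ungraded autoequivalence. The grading shift separating two such lifts can then be detected by evaluating on a single test object: iterating Proposition \ref{gbehaviourofequivs}(i) along reduced expressions gives $\TT_w \TT_{w'} \gVerma{e} \simeq \gVerma{ww'}\langle \ell(ww')\rangle \simeq \TT_{ww'} \gVerma{e}$, so the shift must vanish.

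The main obstacle in the first approach is the combinatorial explosion in the length-$m_{st}$ braid relations, where each side is a complex of functors with $2^{m_{st}}$ terms and one must carefully verify that all structural maps (units, counits, differentials in $\TT_s$) have matching homogeneous degrees. The main obstacle in the lifting approach is making the uniqueness-of-lift rigorous for equivalences of triangulated categories: one needs that a graded lift of the identity functor which acts trivially on $\gVerma{e}$ is isomorphic to the identity, which requires a Fitting-style argument in $\End(\TT_{ww'})$ together with the fact that the $\gVerma{x}\langle n\rangle$ (or equivalently the $\gsimple{x}\langle n\rangle$) generate $\Db(\cO_0^{\ZZ})$.
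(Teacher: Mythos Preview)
Your first approach is essentially the paper's, but the paper makes a sharper observation that removes the ``combinatorial explosion'' you worry about: Rouquier's argument in \cite[Prop.\ 3.2]{Ro} is already carried out at the level of complexes of \emph{graded} bimodules. Hence all the isomorphisms he constructs (the explicit homotopy equivalences realising each braid move) are automatically homogeneous of degree zero, and nothing further needs to be checked. There is no separate grading bookkeeping to do; one simply cites \cite[Prop.\ 3.2]{Ro} and notes this feature of its proof.

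Your second, lifting-based approach is genuinely different. It is attractive in that it reuses Thm.\ \ref{braidrelsO} directly, but the obstacle you identify is real: knowing that $\nu F \simeq \nu G$ for two autoequivalences $F, G$ of $\Db(\cO_0^{\ZZ})$ does not by itself give $F \simeq G\langle n\rangle$ for some $n$. A Fitting-type argument would need $\End(F)$ (in the functor category) to be local, or at least would need the ungraded isomorphism to decompose into homogeneous pieces compatibly with composition of functors; neither is obvious for triangulated functors. So this route, while plausible, would require substantial extra work compared with the one-line citation the paper uses.
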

\begin{proof}This follows from \cite[Prop.\ 3.2]{Ro}, since all the isomorphisms in \emph{loc. cit.} are of complexes of \emph{graded} bimodules.
\end{proof}

\subsection{}\label{s:gtilting}Let $w_0$ be the longest element in $W$. For each $x\in W$, set
\begin{equation}\label{definegtilt} \gtilting{x} = \TT_{w_0}^{-1}\gprojective{w_0x}\langle \ell(w_0)\rangle.\end{equation}
Then $\gtilting{x}$ is a lift of the tilting module $D_x$. Further, using Prop.\ \ref{gbehaviourofequivs} we deduce that
\begin{align*}
\TT_{w_0}\gcoVerma{x} &\simeq \TT_{w_0}\TT^{-1}_{x^{-1}}\gVerma{e}\langle \ell(x)\rangle\\
&\simeq \TT_{w_0x}\TT_{x^{-1}}\TT^{-1}_{x^{-1}}\gVerma{e}\langle \ell(x) \rangle\\
&\simeq \TT_{w_0x}\gVerma{e}\langle \ell(x) \rangle \\
&\simeq \gVerma{w_0x} \langle \ell(x) + \ell(w_0x)\rangle \\
&= \gVerma{w_0x} \langle \ell(w_0)\rangle.
\end{align*}
Thus, $\gtilting{x}$ is the unique (up to isomorphism) indecomposable object in $\cO_0^{\ZZ}$ satisfying the following properties:
\begin{enumerate}
\item $\gtilting{x}$ admits a filtration
$0 = V_0 \subset V_1 \subset \cdots \subset V_k = \gtilting{x}$
such that $V_i/V_{i-1}$ is isomorphic to the shift of a graded dual Verma module and $V_k/V_{k-1}\simeq \gcoVerma{x}$.
\item $\Ext^i(\gtilting{x}, \gcoVerma{y}\langle n \rangle)=0$ for all $i\neq 0$, $n\in \ZZ$ and $y\in W$.
\end{enumerate}

\begin{prop}\label{tiltselfdual}The modules $\gtilting{x}$ are self-dual, i.e., $\DD\gtilting{x}\simeq \gtilting{x}$.
\end{prop}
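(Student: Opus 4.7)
The plan is to exploit the natural isomorphism $\DD \TT_s \simeq \TT_s^{-1} \DD$, which says that the graded duality $\DD$ conjugates the auto-equivalence $\TT_s$ to its inverse. To establish it, apply the contravariant functor $\DD$ to the two-term complex $\TT_s = [\pi_s^* \pi_{s*} \to \id] = [\theta_s \langle 1 \rangle \to \id]$. Since $\DD$ is contravariant (swapping cohomological degrees), commutes with $\theta_s$ by $\DD \theta_s \simeq \theta_s \DD$, and conjugates grading shifts via $\DD \langle n \rangle \simeq \langle -n \rangle \DD$, the result is (after reindexing) $[\id \to \theta_s \langle -1 \rangle] \DD = [\id \to \pi_s^! \pi_{s*}] \DD = \TT_s^{-1} \DD$; one checks that $\DD \varepsilon$ is identified with $\eta'$ under the duality between the counit of $(\pi_s^*, \pi_{s*})$ and the unit of $(\pi_{s*}, \pi_s^!)$, so the differentials match.

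Next, I iterate using the braid relations. Fix a reduced expression $w_0 = s_1 \cdots s_m$; since $s_m \cdots s_1$ is a reduced expression for $w_0^{-1} = w_0$, Thm.~\ref{gradedbraid} yields
\[ \DD \TT_{w_0} \simeq \DD \TT_{s_1} \cdots \TT_{s_m} \simeq \TT_{s_1}^{-1} \cdots \TT_{s_m}^{-1} \DD \simeq \TT_{w_0}^{-1} \DD, \]
and therefore $\DD \TT_{w_0}^{-1} \simeq \TT_{w_0} \DD$. Combining with the defining formula \eqref{definegtilt} and $\DD \langle n \rangle \simeq \langle -n \rangle \DD$ gives
\[ \DD \gtilting{x} \simeq \TT_{w_0} (\DD \gprojective{w_0 x}) \langle -\ell(w_0) \rangle. \]

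Applying the forgetful functor $\nu$ and using $\nu \DD \simeq (\cdot)^{\vee} \nu$ together with the self-duality $D_x^{\vee} \simeq D_x$ of ungraded tilting modules (recalled in \S\ref{s:lastnongraded}), one sees that $\nu \DD \gtilting{x} \simeq D_x \simeq \nu \gtilting{x}$; so $\DD \gtilting{x}$ is another graded lift of the indecomposable $D_x$. By the uniqueness of such lifts up to grading shift (the first lemma of \S\ref{s:gO}), there exists $c \in \ZZ$ with $\DD \gtilting{x} \simeq \gtilting{x} \langle c \rangle$. To pin down $c = 0$, I would use the characterization of $\gtilting{x}$ via its dual Verma filtration: by construction its top subquotient is $\gcoVerma{x}$ unshifted, and since $D_x$ admits both a Verma and a dual Verma filtration, $\gtilting{x}$ inherits a graded Verma filtration whose bottom subobject is $\gVerma{x}$ unshifted. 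Dualising that Verma filtration yields a graded dual Verma filtration on $\DD \gtilting{x}$ with top $\gcoVerma{x}$ unshifted; comparing with the dual Verma filtration of $\gtilting{x} \langle c \rangle$, whose top is $\gcoVerma{x} \langle c \rangle$, forces $c = 0$.

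The hard part will be the last claim: that the graded Verma filtration of $\gtilting{x}$ has $\gVerma{x}$ at the bottom with no grading shift. This is not automatic from the lift uniqueness and has to be verified by tracking grading shifts through the iterated action of $\TT_{w_0}^{-1}$ on the Verma filtration of $\gprojective{w_0 x}$ using Prop.~\ref{gbehaviourofequivs}; the bookkeeping is delicate because $\TT_s^{-1}$ sends a graded Verma to a single graded Verma only when length decreases, and otherwise produces a module with several graded Verma subquotients.
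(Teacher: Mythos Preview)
Your reduction to $\DD\gtilting{x}\simeq\gtilting{x}\langle c\rangle$ for some $c\in\ZZ$ is exactly what the paper does, and for the same reason (uniqueness of lifts of the indecomposable $D_x$). However, the first two paragraphs of your proposal --- establishing $\DD\TT_s\simeq\TT_s^{-1}\DD$ and the formula $\DD\gtilting{x}\simeq\TT_{w_0}(\DD\gprojective{w_0x})\langle-\ell(w_0)\rangle$ --- are never used in your own argument: you pass directly to the forgetful functor, which is all that is needed. So that detour can be dropped entirely.

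Where you genuinely diverge from the paper is in pinning down $c=0$. You propose to verify that $\gVerma{x}$ (unshifted) sits at the bottom of a graded Verma flag of $\gtilting{x}$, and you rightly flag this as delicate. The paper bypasses this completely with a multiplicity-one trick: from the graded dual Verma filtration in the characterization of $\gtilting{x}$ one sees that $\gsimple{x}\langle m\rangle$ occurs as a composition factor of $\gtilting{x}$ if and only if $m=0$ (the only dual Verma subquotient containing any shift of $\gsimple{x}$ is $\gcoVerma{x}$ itself, and it occurs unshifted). Since $\gcoVerma{x}$ is a quotient of $\gtilting{x}$, dualizing shows $\gVerma{x}=\DD\gcoVerma{x}$ embeds in $\DD\gtilting{x}$, so $\gsimple{x}$ (unshifted) occurs in $\DD\gtilting{x}\simeq\gtilting{x}\langle c\rangle$, forcing $c=0$. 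This avoids entirely the bookkeeping of iterated $\TT_s^{-1}$ on Verma flags that you anticipate as the hard part.
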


\begin{proof}
As $\nu\DD \gtilting{x} \simeq (\nu\gtilting{x})^{\vee} = D_x^{\vee} \simeq D_x$, we must have $\DD\gtilting{x} \simeq \gtilting{x}\langle n \rangle$ for some $n\in\ZZ$. Now $\gsimple{x}\langle m\rangle$ occurs as a subquotient of $\gtilting{x}\langle n\rangle$ if and only if $m=n$. On the other hand $\DD \gcoVerma{x} \simeq \gVerma{x}$ occurs as a submodule of $\gtilting{x}$. The result follows.
\end{proof}

\section{Complements on Kazhdan-Lusztig theory}\label{s:kl}
\subsection{}The Hecke algebra $\cH$ is the free $\ZZ[v,v^{-1}]$-module $\bigoplus_{x\in W}\ZZ[v,v^{-1}]H_x$ with $\ZZ[v,v^{-1}]$-algebra structure given by
\begin{align}
H_xH_y &= H_{xy} && \mbox{if $\ell(xy)=\ell(x) + \ell(y)$}, \label{braidh}\\
(H_s+v)(H_s-v^{-1}) &= 0 && \mbox{if $s$ is a simple reflection}. \label{quadh}
\end{align}

\subsection{}
There is a unique ring automorphism $d\colon \cH\to\cH$ defined by
\[ d(v)=v^{-1}, \quad d(H_x)=H_{x^{-1}}^{-1}. \]
An element $C\in\cH$ is called self dual if $d(C)=C$. For each $x\in W$ there exists a unique self-dual element $C_x$ such that $C_x\in H_x + \sum_{y}v\ZZ[v]H_y$ (see \cite{KL}).

\subsection{}
Let $b\colon \cH\to\cH$ be the ring automorphism defined by
\[ b(v) = -v^{-1}, \quad b(H_x)=H_x.\]
Then $b$ commutes with $d$. Thus, $C'_x=b(C_x)$ is the unique self-dual element such that $C'_x\in H_x + \sum_y v^{-1}\ZZ[v^{-1}]H_y$.

\subsection{}Consider the Grothendieck group $K_0(\cO_0^{\ZZ})$. For $[X]\in K_0(\cO_0^{\ZZ})$, set
\[ v^n[X]=[X\langle -n \rangle], \quad H_x[X] = [\TT_xX\langle -\ell(x) \rangle].\]
This defines an action of $\cH$ on $K_0(\cO_0^{\ZZ})$. The relations \eqref{braidh} follow from Thm.\ \ref{gradedbraid}. To see \eqref{quadh}, let $s$ be a simple reflection, then
\begin{align*}
[(\TT_s\langle -1\rangle + \id\langle -1\rangle)(\TT_s\langle -1 \rangle - \id\langle 1 \rangle)X]
&=[(\pi_s^*\pi_{s*}\langle -1\rangle)(\TT_s\langle -1 \rangle - \id\langle 1 \rangle)X] \\
&=[(\pi_s^!\pi_{s*}\langle 1 \rangle)(\TT_s\langle -1 \rangle - \id\langle 1 \rangle)X] \\
&=[(\TT_s^{-1}\langle 1 \rangle + \id\langle 1 \rangle)(\TT_s\langle -1 \rangle - \id\langle 1 \rangle)X] \\
&=[(\id - \TT_{s}^{-1}\langle 2 \rangle + \TT_s - \id\langle 2\rangle)X]\\
&=[(\id - \pi_s^*\pi_{s*} + \id\langle 2 \rangle + \pi_s^*\pi_{s*}-\id - \id\langle 2\rangle)X]\\
&=0.
\end{align*}

\subsection{}The map $\phi\colon \cH \to K_0(\cO_0^{\ZZ})$, $H\mapsto H[\gVerma{e}]$ defines an isomorphism of left $\cH$-modules. By Prop.\ \ref{gbehaviourofequivs} we have that $\phi(H_x) = [\gVerma{x}]$. Further, the map $\phi$ intertwines the automorphism $d$ and the contravariant duality, i.e., $\phi(d(H)) = \DD\phi(H)$.
Thus, self-dual elements in $\cH$ map to elements $[L]\in K_0(\cO_0^{\ZZ})$ such that $[\DD L]=[L]$. Certainly, $[\DD \gsimple{x}] = [\gsimple{x}]$. The Kazhdan-Lusztig conjecture (a Theorem since about 30 years), concerning multiplicities of simple modules in Verma modules \cite{KL}, can be formulated as
\begin{equation}\label{kleq}\tag{*}\phi^{-1}([\gsimple{x}]) = b(C_x).\end{equation}
Unfortunately (but not surprisingly), the work we have done so far does not give enough information to prove this. The problem is that although the $\phi^{-1}([\gsimple{x}])$ are self dual, we do not have enough information to infer
\begin{equation}\label{uppertriang}\tag{**}\phi^{-1}([\gsimple{x}])\in H_x + \sum_y v^{-1}\ZZ[v^{-1}]H_y.\end{equation}
However, let's at least get the following out of the way.
\begin{thm}[{cf.\ \cite[Thm.\ 4.4]{So08}}]\label{klequivtilt}The following two statements are equivalent:
\begin{enumerate}
\item $\phi^{-1}([\gsimple{x}]) = b(C_x)$.
\item $\phi^{-1}([\gtilting{x}])=C_x$.
\end{enumerate}
\end{thm}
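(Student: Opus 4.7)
The proof translates everything into the Hecke algebra $\cH$ via $\phi$. The pivotal input is the defining formula \eqref{definegtilt}: combining $\gtilting{x} = \TT_{w_0}^{-1}\gprojective{w_0x}\langle \ell(w_0)\rangle$ with the $\cH$-action $H_x\cdot[X] = [\TT_xX\langle -\ell(x)\rangle]$ and the identity $H_{w_0}^{-1} = d(H_{w_0})$ yields
\[
\phi^{-1}[\gtilting{x}] \;=\; d(H_{w_0})\,\phi^{-1}[\gprojective{w_0x}]
\]
in $\cH$. Hence (i) and (ii) are linked via the invertible operation of left multiplication by $d(H_{w_0})$.

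Next I would invoke graded BGG reciprocity \cite{St}: writing $\phi^{-1}[\gsimple{y}] = \sum_z h_{z,y}(v)H_z$ and $\phi^{-1}[\gprojective{y}] = \sum_z p_{z,y}(v)H_z$, reciprocity identifies $p_{z,y}(v)$ with the graded Jordan-H\"older multiplicity generating function $\sum_n[\gVerma{z}:\gsimple{y}\langle n\rangle]v^{-n}$, which via expansion of $[\gVerma{z}]$ in the simple basis makes $(p_{z,y})$ the inverse-transpose of $(h_{z,y})$ (both Bruhat-triangular with ones on the diagonal). Under hypothesis (i), $(h_{z,y})$ is the transition matrix from $\{C'_y\}$ to $\{H_z\}$, so $(p_{z,y})$ is determined (the ``inverse KL matrix''), and hence via the displayed identity so is $\phi^{-1}[\gtilting{x}]$.

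I would then verify that this computed element of $\cH$ equals $C_x$ via the uniqueness characterization of the KL basis: $C_x$ is the unique $d$-self-dual element of $H_x + \sum_{y<x}v\ZZ[v]H_y$. Self-duality follows from $\DD\gtilting{x}\simeq \gtilting{x}$ (Proposition \ref{tiltselfdual}) combined with the intertwining of $d$ and $\DD$ by $\phi$. The leading-term condition translates the dual-Verma filtration of $\gtilting{x}$: by \eqref{definegtilt} combined with the graded Verma filtration of $\gprojective{w_0x}$ (controlled under (i) by the inverse KL matrix), the top dual-Verma subquotient is $\gcoVerma{x}$ in degree $0$ and the lower ones are $\gcoVerma{y}\langle n\rangle$ with $y<x$ and $n>0$; conversion to the Verma basis via $\phi^{-1}[\gcoVerma{w}] = d(H_w) = H_w + \sum_{y<w}\ZZ[v,v^{-1}]H_y$ preserves Bruhat triangularity and the $v\ZZ[v]$-polarity. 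The converse (ii)~$\Rightarrow$~(i) is strictly symmetric, using $d(H_{w_0})^{-1} = H_{w_0}$ and the analogous characterization of $C'_x\in H_x + \sum_{y<x}v^{-1}\ZZ[v^{-1}]H_y$.

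The principal obstacle is the leading-term verification: ensuring that the $v\ZZ[v]$-polarity of the $H_y$-coefficients of $\phi^{-1}[\gtilting{x}]$ for $y<x$ is exactly the one induced by the $v^{-1}\ZZ[v^{-1}]$-polarity of the $C'_y$'s via graded BGG reciprocity followed by left multiplication by $d(H_{w_0})$. The combinatorial bookkeeping involves the interplay of Bruhat order (reversed by $w_0$), the $d$-involution (swapping $v$ and $v^{-1}$), and the grading shifts in the graded Verma and dual-Verma filtrations.
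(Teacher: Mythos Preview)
Your approach and the paper's start identically: both use \eqref{definegtilt} to obtain $\phi^{-1}[\gtilting{x}] = H_{w_0}^{-1}\phi^{-1}[\gprojective{w_0x}]$, and both invoke graded BGG reciprocity (which the paper packages as a $\ZZ[v,v^{-1}]$-bilinear form in which projectives and simples are dual bases while Vermas are orthonormal) to determine $\phi^{-1}[\gprojective{w_0x}]$ from the hypothesis on simples. The routes then diverge: the paper cites an external Hecke-algebra identity $H_{w_0}C_x = P_{w_0x}$ from \cite{Virkh} and is done, whereas you attempt to verify the self-duality characterization of $C_x$ directly. Your route is more self-contained and essentially sound, but one step is misstated.

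The gap is your final sentence in the leading-term verification: the claim that converting from the $d(H_y)$-basis to the $H_y$-basis via the $R$-polynomial expansion ``preserves \ldots\ the $v\ZZ[v]$-polarity'' is false. Already $d(H_s) = H_s + (v-v^{-1})H_e$, and $v-v^{-1}\notin v\ZZ[v]$. What you have actually established under (i), correctly, is that the dual-Verma coefficients lie in $v^{-1}\ZZ[v^{-1}]$: your grading shifts $n>0$ translate to
\[
\phi^{-1}[\gtilting{x}] \;=\; d(H_x) + \sum_{y<x} a_y\, d(H_y), \qquad a_y\in v^{-1}\ZZ[v^{-1}].
\]
The fix is to use the self-duality you have already set up, \emph{instead of} the $R$-polynomial expansion: applying $d$ to both sides and invoking $d(\phi^{-1}[\gtilting{x}]) = \phi^{-1}[\gtilting{x}]$ gives
\[
\phi^{-1}[\gtilting{x}] \;=\; H_x + \sum_{y<x} d(a_y)\, H_y, \qquad d(a_y)\in v\ZZ[v],
\]
which is exactly the required triangularity. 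The same trick handles the converse. Note also that your argument naturally proves ``(i) for all $x$'' $\Leftrightarrow$ ``(ii) for all $x$'', since deducing the Verma-flag polarity of $\gprojective{w_0x}$ uses (i) for all $y\geq w_0x$; this matches the intended reading of the statement.
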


\begin{proof}
The Grothendieck group $K_0(\cO_0^{\ZZ})$ comes with a symmetric $\ZZ[v,v^{-1}]$-bilinear form given by
\[ \langle [M], [N] \rangle = \sum_{i,n} (-1)^i \dim\,\Ext^i(M, \DD N \langle n \rangle)v^{-n}, \]
for $M,N\in\cO_0^{\ZZ}$. With respect to this form the $[\gprojective{x}]$ and the $[\gsimple{x}]$ are dual bases, whereas the $[\gVerma{x}]$ form an orthonormal basis. Via $\phi$ this descends to the $\ZZ[v,v^{-1}]$-bilinear form on $\cH$ defined by $\langle H_x ,H_y\rangle =\delta_{x,y}$. Let $\{P_x\}_{x\in W}$ be the basis dual to $\{b(C_x)\}_{x\in W}$ in $\cH$. 
In \cite[\S3]{Virkh}, the basis dual to $\{C_x\}_{x\in W}$ is constructed combinatorially; denote this basis by $\{P'_x\}_{x\in W}$.
Then in \cite[Thm.\ 4.3]{Virkh} it is shown that
$b(C_x)H_{w_0} = P'_{xw_0}$
for all $x\in W$. Let $i\colon \cH\to\cH$ denote the ring anti-automorphism given by $i(v)=v$ and $i(H_x)=H_{x^{-1}}$. The morphisms $b,d$ and $i$ pairwise commute. Consequently, applying $i$ to $b(C_x)H_{w_0} = P'_{xw_0}$ we infer that $H_{w_0}b(C_{x^{-1}}) = P'_{w_0x^{-1}}$ or equivalently
$H_{w_0}b(C_{x}) = P'_{w_0x}$
for all $x\in W$. On the other hand, it is clear that $P'_{x} = b(P_x)$ for all $x\in W$. Thus, applying $b$ to the above, we deduce that
\[ H_{w_0}C_x = P_{w_0x} \]
for all $x\in W$.
Combining this with \eqref{definegtilt} gives the result.
\end{proof}

\begin{assumption}\label{koszul}
The ring $A$ is positively graded, i.e., $A=\bigoplus_{i\geq 0}A_i$, where $A_i$ is the homogeneous component of degree $i$. Further, the ring $A_0$ is semisimple.
\end{assumption}

\begin{remark}The above assumption is known to be true \cite[Lemma 19, Erweiterungssatz 17]{So90}, also see \cite{BGS}. However, as far as I am aware, all known proofs of this require geometric arguments.
\end{remark}

\begin{thm}\label{klconj}If Assumption \ref{koszul} holds, then \eqref{kleq} holds.
\end{thm}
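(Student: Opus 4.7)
The plan is to prove the upper-triangularity condition \eqref{uppertriang}. Combined with the self-duality $d(\phi^{-1}([\gsimple{x}])) = \phi^{-1}([\gsimple{x}])$ (already noted since $\DD\gsimple{x}\simeq\gsimple{x}$ and $\phi$ intertwines $\DD$ with $d$) and the uniqueness characterization of $b(C_x)$ as the unique self-dual element of $H_x + \sum_y v^{-1}\ZZ[v^{-1}]H_y$, this forces $\phi^{-1}([\gsimple{x}]) = b(C_x)$, which is \eqref{kleq}.

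The content of Assumption \ref{koszul} is used to pin down the graded composition factors of $\gVerma{x}$. Since $A_0$ is semisimple, the augmentation ideal $A_+ = \bigoplus_{i>0}A_i$ is the Jacobson radical of $A$. Now $\gVerma{x}$ is cyclic, with unique simple quotient $\gsimple{x}$ sitting in degree $0$; lifting a generator of $\gsimple{x}$ to $\gVerma{x}$ gives a cyclic generator in degree $0$, so $\gVerma{x}$ is concentrated in degrees $\geq 0$ and $(\gVerma{x})_0 \simeq \gVerma{x}/\gVerma{x}\cdot A_+ \simeq \gsimple{x}$ as $A_0$-modules. Consequently the graded composition multiplicity $[\gVerma{x}:\gsimple{y}\langle n\rangle]$ vanishes for $n<0$, and equals $\delta_{x,y}$ for $n=0$.

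To convert this into \eqref{uppertriang}, write $[\gVerma{x}] = \sum_y q_{y,x}(v)[\gsimple{y}]$; using $v^n[N]=[N\langle -n\rangle]$, the multiplicity statement shows $q_{x,x}(v)\in 1+v^{-1}\ZZ[v^{-1}]$ and $q_{y,x}(v)\in v^{-1}\ZZ[v^{-1}]$ for $y\neq x$. In matrix form $Q(v) = I + v^{-1}M(v)$ with $M(v)$ a (finite, since $W$ is finite) matrix over $\ZZ[v^{-1}]$. Because $\{[\gVerma{y}]\}_{y\in W}$ and $\{[\gsimple{y}]\}_{y\in W}$ are both $\ZZ[v,v^{-1}]$-bases of $K_0(\cO_0^{\ZZ})$, the inverse $Q(v)^{-1}$ exists over $\ZZ[v,v^{-1}]$; computed formally as the geometric series $\sum_{k\geq 0}(-v^{-1}M(v))^k$ in the ring of matrices over $\ZZ[[v^{-1}]]$, it has entries in $I+v^{-1}\ZZ[v^{-1}]$. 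Reading off the column indexed by $x$ and applying $\phi^{-1}$ yields \eqref{uppertriang}. The only genuinely non-formal step is the identification $(\gVerma{x})_0 \simeq \gsimple{x}$: this is exactly where Assumption \ref{koszul} intervenes, and without positivity of the grading nothing constrains the $v$-degrees appearing in the multiplicity polynomials. Everything else is formal manipulation with the transition matrix together with the standard Hecke-algebra uniqueness.
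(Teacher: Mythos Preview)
Your argument is correct and follows the same overall strategy as the paper's: use Assumption~\ref{koszul} to show that $\gVerma{x}$ lives in non-negative degrees with degree~$0$ part exactly $\gsimple{x}$, deduce that the transition matrix from simples to Vermas has the form $I+v^{-1}(\text{something in }\ZZ[v^{-1}])$, invert to obtain \eqref{uppertriang}, and finish by the uniqueness characterization of $b(C_x)$.

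The one notable difference is in how the inversion is carried out. The paper appeals to the additional fact that the composition factors of $\Verma{x}$ are among the $\simple{y}$ with $y\leq x$ in Bruhat order, so the transition matrix is unitriangular for the Bruhat partial order and one can invert by a straightforward induction on $\ell(x)$. You instead use only the congruence $Q(v)\equiv I \pmod{v^{-1}}$ and invert via the geometric series in $\ZZ[[v^{-1}]]$, then intersect with $\ZZ[v,v^{-1}]$ (where the inverse is already known to live) to land in $\ZZ[v^{-1}]$. Your route is marginally more self-contained, since it avoids invoking the Bruhat bound on composition factors; the paper's route is perhaps more transparent, since Bruhat-unitriangularity makes the inversion a finite computation with no completion needed. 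Either way the substance is the same.
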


\begin{proof}Let $x\in W$.
As $A$ is positively graded and the unique simple quotient of $\gVerma{x}$ (namely $\gsimple{x}$) is concentrated in degree $0$, we infer that $\gVerma{x}$ is concentrated in degrees $\geq 0$. Since $A_0$ is semisimple, the degree $0$ component of $\gVerma{x}$ is also semisimple. This forces the degree $0$ component of $\gVerma{x}$ to be $\gsimple{x}$. Thus, at the level of $K_0(\cO_0^{\ZZ})$ we have
\[ [\gVerma{x}] = \gsimple{x}+ \sum_{y < x} m_{y,x}[\gsimple{y}\langle n_{y,x}\rangle], \]
for some $m_{y,x}\in \ZZ_{\geq 0}$ and $n_{y,x}> 0$. By induction on the Bruhat order this implies 
\[ [\gsimple{x}] = [\gVerma{x}] + \sum_{y<x} m'_{y,x}[\gVerma{y}\langle n'_{y,x}\rangle] \]
for some $m'_{y,x}\in \ZZ$ and $n'_{y,x}>0$. This gives \eqref{uppertriang} which immediately yields \eqref{kleq}.
\end{proof}

\end{document}